\numberwithin{equation}{section}
\newtheorem{definition}{Definition}[section]
\newtheorem{theorem}{Theorem}[section]
\newtheorem{lemma}{Lemma}[section]
\newtheorem{corollary}{Corollary}[section]
\newtheorem{proposition}{Proposition}[section]
\begin{document}

\title[Quadratic covariations for the solution to a SHE]
{Quadratic covariations for the solution to a stochastic heat equation${}^{*}$}

\footnote[0]{${}^{*}$The Project-sponsored by NSFC (No. 11571071, 11426036), and Innovation Program of Shanghai Municipal Education Commission (No. 12ZZ063)}

\author[X. Sun, L. Yan and X. Yu]{Xichao Sun${}^{1}$, Litan Yan${}^{2,\S}$ and Xianye Yu${}^{2}$}

\footnote[0]{${}^{\S}$litan-yan@hotmail.com (Corresponding Author)}

%${}^{\dag}$sunxichao626@126.com

\date{}

\keywords{Stochastic heat equation, stochastic
integral, It\^{o} formula, quadratic covariation, local time, Bi-fractional Brownian motion}

\subjclass[2000]{60G15, 60H05, 60H15}

\maketitle

\begin{center}
{\footnotesize {\it ${}^1$Department of Mathematics and Physics, Bengbu University\\
1866 Caoshan Rd., Bengbu 233030, P.R. China\\
${}^2$Department of Mathematics, College of Science, Donghua University\\
2999 North Renmin Rd., Songjiang, Shanghai 201620, P.R. China}}
\end{center}

\maketitle

%%%%%%%%%%%%%%%%%%%%%

\begin{abstract}
Let $u(t,x)$ be the solution to a stochastic heat equation
$$
\frac{\partial}{\partial t}u=\frac12\frac{\partial^2}{\partial x^2}u+\frac{\partial^2}{\partial t\partial x}X(t,x),\quad t\geq 0, x\in {\mathbb R}
$$
with initial condition $u(0,x)\equiv 0$, where $X$ is a time-space white noise. This paper is an attempt to study stochastic analysis questions of the solution $u(t,x)$. In fact, the solution is a Gaussian process such that the process $t\mapsto u(t,\cdot)$ is a bi-fractional Brownian motion seemed a fractional Brownian motion with Hurst index $H=\frac14$ for every real number $x$. However, the properties of the process $x\mapsto u(\cdot,x)$ are unknown. In this paper we consider the quadratic covariations of the two processes $x\mapsto u(\cdot,x),t\mapsto u(t,\cdot)$. We show that $x\mapsto u(\cdot,x)$ admits a nontrivial finite quadratic variation and the forward integral of some adapted processes with respect to it coincides with "It\^o's integral", but it is not a semimartingale. Moreover, some generalized It\^o's formulas and Bouleau-Yor identities are introduced.
\end{abstract}

\tableofcontents
%%%%%%%%%%%%%%%%%%%%%%%%%%%%%%%%%%%%%%%%%%%%%%%%%%%%%%%%%%%%%%

\section{Introduction}\label{sec1}
Let $u(t,x)$ denote the solution to the stochastic heat equation
\begin{equation}\label{sec1-eq1.-1}
\frac{\partial}{\partial
t}u=\frac12\frac{\partial^2}{\partial x^2}u+\frac{\partial^2}{\partial
t\partial x}X(t,x),\quad t\geq 0, x\in {\mathbb R}
\end{equation}
with initial condition $u(0,x)\equiv 0$, where $\dot{X}$ is a
time-space white noise on $[0,\infty)\times {\mathbb R}$. That is,
$$
u(t,x)=\int_0^t\int_{\mathbb{R}}p(t-r,x-y)X(dr,dy),
$$
where $p(t,x)=\frac1{\sqrt{2\pi t}}e^{-\frac{x^2}{2t}}$ is the heat kernel. Then, these processes $(t,x)\mapsto u(t,x)$, $t\mapsto u(t,\cdot)$ and $x\mapsto u(\cdot,x)$ are Gaussian. Swanson~\cite{Swanson} has showed that
\begin{equation}\label{sec1-eq1.0}
E\left[u(t,x)u(s,x)\right]=\frac1{\sqrt{2\pi}}
\left((t+s)^{1/2}-|t-s|^{1/2}\right),\qquad t,s\geq 0,
\end{equation}
and the process $t\mapsto u(t,x)$ has a nontrivial quartic
variation. This shows that for every $x\in {\mathbb R}$, the process $t\mapsto u(t,x)$ coincides with the bi-fractional Brownian motion and it is not a semimartingale, so a stochastic integral with respect to the process $t\mapsto u(t,x)$ cannot be defined in the classical It\^o sense. Some surveys and complete literatures for bi-fractional Brownian motion could be found in Houdr\'e and Villa~\cite{Hou}, Kruk {\em et al}~\cite{Kruk}, Lei and Nualart~\cite{Lei-Nualart}, Russo and Tudor~\cite{Russo-Tudor}, Tudor and Xiao~\cite{Tudor-Xiao} and Yan {\em et al}~\cite{Yan4}, and the references therein. It is important to note for a large class of parabolic SPDEs, one obtains better regularity results when the solution $u$ is viewed as a process $t\mapsto u(t,x)$ taking values in Sobolev space, rather than for each fixed $x$. Denis~\cite{Denis} and Krylov~\cite{Krylov} considered a class of stochastic partial differential equations driven by a multidimensional Brownian motion and showed that the solution is a Dirichlet processes. These inspire one to consider stochastic calculus with respect to the solution to the stochastic heat equation. It is well known that many authors have studied some It\^o analysis questions of the solutions of some stochastic partial differential equations and introduced the related It\^o and Tanaka formula (see, for examples, Da Prato {\em et al}~\cite{Da Prato}, Deya and Tindel~\cite{Deya-Tindel}, Gradinaru {\em et al}~\cite{Grad5}, Lanconelli~\cite{Lanconelli1,Lanconelli2}, Le\'on and Tindel {\em et al}~\cite{Leon-Tindel}, Nualart and Vuillermot~\cite{Nua5}, Ouahhabi and Tudor~\cite{Ouahhabi-Tudor}, Pardoux~\cite{Pardoux}, Torres {\em et al}~\cite{Torres}, Tudor~\cite{Tudor}, Tudor and Xiao~\cite{Tudor-Xiao2}, Zambotti~\cite{Zambotti}, and the references therein). Almost all of these studies considered only the process in time, and there is a little discussion about the process $x\mapsto u(\cdot,x)$. This paper is an attempt to study stochastic analysis questions of the solution $u(t,x)$.

On the other hand, we shall see (in Section~\ref{sec4}) that
the process $x\mapsto u(\cdot,x)$ admits a nontrivial finite quadratic variation coinciding with the classical Brownian motion in any finite interval, and moreover we shall also see (in Section~\ref{sec4-1}) that the forward integral of some adapted processes with respect to $x\mapsto u(\cdot,x)$ coincides with "It\^o's integral". As a noise, the stochastic process $u=\{u(t,x), t\geq 0,x\in {\mathbb R}\}$ is {\em very rough} in time and it is not white in space. However, the process $x\mapsto u(\cdot,x)$ admits some characteristics similar to Brownian motion. These results, together with the works of Swanson~\cite{Swanson}, point out that the process $u=\{u(t,x)\}$ as a noise admits the next special structures:
\begin{itemize}
\item It is very rough in time and similar to fractional Brownian motion with Hurst index $H=\frac14$, but it has not stationary increments.
\item It is not white in space, but its quadratic variation coincides with the classical Brownian motion and it is not self-similar.
\item The process in space variable is not a semimartingale, but the forward integral of some adapted processes with respect to the process in space variable coincides with "It\^o's integral".
\item The process $u=\{u(t,x)\}$ admits a simple representation via Wiener integral with respect to Brownian sheet.
\item Though the process $u=\{u(t,x)\}$ is Gaussian, as a noise, its time and space parts are farraginous. We can not decompose its covariance as the product of two independent parts. This is very different from fractional noise and white noise. In fact, we have
    \begin{align*}
Eu(t,x)u(s,y)=\frac1{\sqrt{2\pi}}\int_0^s\frac1{\sqrt{t+s-2r}}
\exp\left\{-\frac{(x-y)^2}{2(t+s-2r)}\right\}dr
\end{align*}
for all $t\geq s>0$ and $x,y\in {\mathbb R}$.
\end{itemize}
Therefore, it seems interesting to study the integrals
$$
\int_{\mathbb R}f(x)u(t,dx),\quad\int_0^tf(s)u(ds,x),\quad\int_0^t\int_{\mathbb R}f(s,x)u(ds,dx),
$$
and some related stochastic (partial) differential equations. For example, one can consider the following "iterated" stochastic partial differential equations:
$$
\frac{\partial}{\partial t}u^j=\frac12\frac{\partial^2}{\partial x^2}u^j+f(u^j)+\frac{\partial^2}{\partial t\partial x}u^{j-1}(t,x),\quad t\geq 0, x\in {\mathbb R},\quad j=1,2,\ldots,
$$
where $u^0$ is a time-space white noise. Of course, one can also consider some sample path properties and singular integrals associated with the solution process $u=\{u(t,x),t\geq 0,x\in {\mathbb R}\}$. We will carry out these projects in some forthcoming works. In the present paper our start points are to study the quadratic variations of the two processes $x\mapsto u(\cdot,x),t\mapsto u(t,\cdot)$. Our objects are to study the quadratic covariations of $x\mapsto u(\cdot,x)$ and $t\mapsto u(t,\cdot)$, and moreover, we shall also introduce some generalized It\^o formulas associated with $\{u(\cdot,x),x\in {\mathbb R}\}$ and $\{u(t,\cdot),t\geq 0\}$, respectively, and to consider their local times and Bouleau-Yor's identities.

To expound our aim, let us start with a basic definition. An elementary calculation can show that (see Section~\ref{sec2})
\begin{equation}\label{sec1-eq1.1}
E[(u(t,x)-u(s,y))^2]= \frac1{\sqrt{2\pi}}\left(\sqrt{2|t-s|}+\Delta(s,t,x-y)\right)
\end{equation}
for all $t,s>0$ and $x,y\in {\mathbb R}$, where
$$
\Delta(s,t,z)=\int_0^{s\wedge t}\left(
\frac1{\sqrt{2(t-r)}}-\frac{2}{\sqrt{t+s-2r}}
\exp\left\{-\frac{z^2}{2(t+s-2r)}\right\}+\frac1{\sqrt{2(s-r)}} \right)dr
$$
for $t,s>0$ and $z\in {\mathbb R}$. This simple estimate inspires us to consider the following limits:
\begin{equation}
\begin{split}
\lim_{\varepsilon\to 0}\frac1{\varepsilon}E[(u(t,x+\varepsilon)&-u(t,x))^2]
=\frac1{\sqrt{\pi}}\lim_{\varepsilon\to 0}\frac1{\varepsilon}\int_0^{t}
\frac1{\sqrt{r}}\left(1-e^{-\frac{\varepsilon^2}{4r}}\right)dr\\
&=\frac2{\sqrt{\pi}}\int_0^{\infty}
\frac1{s^2}\left(1-e^{-\frac{s^2}{4}}\right)ds=1
\end{split}
\end{equation}
and
\begin{equation}
\lim_{\varepsilon\to 0}\frac1{\sqrt{\varepsilon}} E[(u(t+\varepsilon,x)-u(t,x))^2] =\sqrt{\frac2{\pi}}
\end{equation}
for all $t\geq 0$ and $x\in {\mathbb R}$. That is,
$$
\lim_{\delta\to 0}\lim_{\varepsilon\to 0}\frac1{\sqrt{\varepsilon}+\delta} E[(u(t+\varepsilon,x+\delta)-u(t,x))^2]=1,
$$
$$
\lim_{\varepsilon\to 0}\lim_{\delta\to 0}\frac1{\sqrt{\varepsilon}+\delta} E[(u(t+\varepsilon,x+\delta)-u(t,x))^2]=\sqrt{\frac2{\pi}}
$$
for all $t\geq 0$ and $x\in {\mathbb R}$. However, it is easy  to see that the limit
\begin{align*}
\lim\limits_{\substack{\varepsilon\to 0\\ \delta\to 0}}\frac1{\sqrt{\varepsilon}+\delta} E[(u(t+\varepsilon,x+\delta)-u(t,x))^2]
\end{align*}
does not exist for all $t>0, x\in {\mathbb R}$ by taking $\varepsilon=k\delta^2$ with $k>0$. Thus, the next definition is natural.
\begin{definition}
Denote $B:=\{B_t:=u(t,\cdot),t\geq 0\}$ and $W:=\{W_x:=u(\cdot,x),x\in {\mathbb R}\}$. Let $I_x=[0,x]$ for $x\geq 0$ and $I_x=[x,0]$ for $x\leq 0$. Define the integrals
\begin{align*}
I_\delta^1(f,x,t)&=\frac1{\delta}\int_{I_x} \left\{f(W_{y+\delta})-f(W_y)\right\}(W_{y+\delta}-W_y)dy,\\
I_\varepsilon^2(f,x,t)&=\frac1{\sqrt{\varepsilon}}\int_0^t \left\{f(B_{s+\varepsilon})-f(B_s)\right\}
(B_{s+\varepsilon}-B_s)\frac{ds}{2\sqrt{s}}
\end{align*}
for all $t\geq 0,x\in {\mathbb R},\varepsilon,\delta>0$, where $f$ is a measurable function on ${\mathbb R}$. The limits $\lim\limits_{\delta\to 0}I_\delta^1(f,t,x)$ and $\lim\limits_{\varepsilon\to 0}I_\varepsilon^2(f,t,x)$ are called the partial quadratic covariations (PQC, in short) in space and in time, respectively, of $f(u)$ and $u$, provided these limits exist in probability. We denote them by $[f(W),W]^{(SQ)}_x$ and $[f(B),B]^{(TQ)}_t$, respectively.
\end{definition}
Clearly, we have (see Section~\ref{sec4})
$$
[f(W),W]^{(SQ)}_x=\int_{I_x}f'(W_y)dy
$$
and $[W,W]^{(SQ)}_x=|x|$ for all $f\in C^1({\mathbb R}),t>0,x\in {\mathbb R}$. We also have (see Section~\ref{sec6})
$$
[f(B),B]^{(TQ)}_t=\int_0^tf'(B_s)\frac{ds}{\sqrt{2\pi s}}
$$
and $[B,B]^{(TQ)}_t=\sqrt{\frac{2}{\pi}t}$ for all $f\in C^1({\mathbb R}),t\geq 0,x\in {\mathbb R}$. These say that the process $W=\{W_x=u(\cdot,x),x\in {\mathbb R}\}$ admits a nontrivial finite quadratic variation in any finite interval $I_x$. This is also a main motivation to study the solution of~\eqref{sec1-eq1.-1}.

This paper is organized as follows. In Section~\ref{sec2}, we establish some technical estimates associated with the solution, and as some applications we introduce Wiener integrals with respect to the two processes $B=\{B_t=u(t,\cdot),t\geq 0\}$ and $W=\{W_x=u(\cdot,x),x\in {\mathbb R}\}$, respectively. In Section~\ref{sec4} we show that the quadratic variation $[W,W]^{(SQ)}$ exists in $L^2(\Omega)$ and equals to $|x|$ in every finite interval $I_x$. For a given $t>0$, by estimating in $L^2$
$$
\frac1{\varepsilon}\int_{I_x}f(W_{y+\varepsilon})
(W_{y+\varepsilon}-W_{y})dy\quad (x\in {\mathbb R})
$$
and
$$
\frac1{\varepsilon}\int_{I_x}f(W_y)(W_{y+\varepsilon}-W_y)dy
\quad (x\in {\mathbb R})
$$
for all $\varepsilon>0$, respectively, we construct a Banach space ${\mathscr H}_t$ of measurable functions such that the PQC $[f(W),W]^{(SQ)}$ in space exists in $L^2(\Omega)$ for all $f\in {\mathscr H}_t$, and in particular we have
$$
[f(W),W]_x^{(SQ)}=\int_{I_x}f'(W_y)dy
$$
provided $f\in C^1({\mathbb R})$. In Section~\ref{sec4-1}, as an application of Section~\ref{sec4}, we show that the It\^o's formula
\begin{align*}
F(W_x)=F(W_0)+\int_{I_x}f(W_y)\delta W_y+\frac1{2}[f(W),W]_x^{(SQ)}
\end{align*}
holds for all $t>0,x\in {\mathbb R}$, where the integral  $\int_{I_x}f'(W_y)\delta W_y$ denotes the Skorohod integral, $F$ is an absolutely continuous function with the derivative $F'=f\in {\mathscr H}_t$. In order to show that the above It\^o formula we first introduce a standard It\^o type formula
\begin{equation}\label{sec1-1-eq4.80011}
F(W_x)=F(W_0)+\int_{I_x}F'(W_y)\delta W_y +\frac1{2}\int_{I_x}F''(W_y)dy
\end{equation}
for all $F\in C^2({\mathbb R})$ satisfying some suitable  conditions. It is important to note that the Gaussian process
$W=\{W_x=u(\cdot,x),x\in {\mathbb R}\}$ does not satisfy the condition in Al\'os {\em et al}~\cite{Nua1} since
\begin{align*}
E\left[u(t,x)^2\right]=\sqrt{\frac{t}{\pi}},\quad \frac{d}{x}E\left[u(t,x)^2\right]=0
\end{align*}
for all $t\geq 0$ and $x\in {\mathbb R}$. We need to give the proof of the formula~\eqref{sec1-1-eq4.80011}. Moreover, we also show that the forward integral (see Russo-Vallois~\cite{Russo-Vallois2,Russo-Vallois3})
$$
\int_{I_x}f(W_y)d^{-}W_y:={\rm ucp}\lim_{\varepsilon\downarrow 0}\frac1{\varepsilon}\int_{I_x}f(W_y) \left(W_{y+\varepsilon}-W_y\right)dy
$$
coincides with the Skorohod integral $\int_{I_x}f(W_y)\delta W_y$, if $f$ satisfies the growth condition
\begin{equation}
|f(y)|\leq Ce^{\beta {y^2}},\quad y\in {\mathbb R}
\end{equation}
with $0\leq \beta<\frac{\sqrt{\pi}}{4\sqrt{t}}$, where the notation ${\rm ucp}\lim$ denotes the uniform convergence in probability on each compact interval. This is very similar to Brownian motion, but the process $W=\{W_x=u(\cdot,x),x\in {\mathbb R}\}$ is not a semimartingale. In Section~\ref{sec4-2} we consider some questions associated with the local time
$$
{\mathscr L}^t(x,a)=\int_0^x\delta(W_y-a)dy
$$
of the process $W=\{W_x=u(\cdot,x),x\geq 0\}$. In particular, we show that the Bouleau-Yor type identity
$$
[f(W),W]^{(SQ)}_x=-\int_{\mathbb {R}}f(v){\mathscr L}^t(x,dv)
$$
holds for all $f\in {\mathscr H}_t$. In Section~\ref{sec6} we consider some analysis questions associated with the quadratic covariation of the process $B=\{B_t=u(t,\cdot),t\geq 0\}$.

\section{Some basic estimates and divergence integrals}\label{sec2}
In this section we will establish divergence integral and some technical estimates associated with the solution
$$
u(t,x)=\int_0^t\int_{\mathbb{R}}p(t-r,x-y)X(dr, dy),\quad t\geq 0,x\in {\mathbb R},
$$
where $p(t,x)=\frac1{\sqrt{2\pi t}}e^{-\frac{x^2}{2t}}$ is the heat kernel. For simplicity throughout this paper we let $C$ stand for a positive constant depending only on the subscripts and its value may be different in different appearance, and this assumption is also adaptable to $c$. Moreover, we assume that the notation $F\asymp G$ means that there are positive constants $c_1$ and $c_2$ such that
$$
c_1G(x)\leq F(x)\leq c_2G(x)
$$
in the common domain of definition for $F$ and $G$.

The first object in this section is to introduce some basic estimates for the solution process $\{u(t,x),t\geq 0,x\in {\mathbb R}\}$. We have
\begin{align*}
R_{x,y}(s,t):&=Eu(t,x)u(s,y)=\int_0^s\int_{\mathbb{R}} p(t-r,x-z)p(s-r,y-z)dzdr\\
&=\frac1{2\pi}
\int_0^s\int_{\mathbb{R}}\frac1{\sqrt{(t-r)(s-r)}}
\exp\left\{-\frac{(x-z)^2}{2(t-r)}-\frac{(y-z)^2}{2(s-r)}\right\}
dzdr\\
&=\frac1{\sqrt{2\pi}}\int_0^s\frac1{\sqrt{t+s-2r}}
\exp\left\{-\frac{(x-y)^2}{2(t+s-2r)}\right\}dr
\end{align*}
for all $t\geq s>0$ and $x,y\in {\mathbb R}$. Denote
$$
\Delta(s,t,u)=\int_0^s\left(
\frac1{\sqrt{2(t-r)}}-\frac{2}{\sqrt{t+s-2r}}
\exp\left\{-\frac{u^2}{2(t+s-2r)}\right\}+\frac1{\sqrt{2(s-r)}} \right)dr
$$
for $t\geq s$. Then, we have
\begin{equation}\label{sec2-eq2.1}
E[(u(t,x)-u(s,y))^2]= \frac1{\sqrt{2\pi}}\left(\sqrt{2(t-s)}+\Delta(s,t,x-y)\right)
\end{equation}
for all $t>s>0$ and $x,y\in {\mathbb R}$.
\begin{lemma}\label{lem2.1}
For all $t\geq s>0$ and $x,y\in {\mathbb R}$ we have
\begin{equation}\label{sec2-eq2.2}
E\left[(u(t,x)-u(s,y))^2\right]\leq C\left(\sqrt{t-s}+|x-y|\right).
\end{equation}
\end{lemma}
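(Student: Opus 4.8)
First I would reduce everything to an estimate on the quantity $\Delta$. Starting from the exact identity~\eqref{sec2-eq2.1}, the claim~\eqref{sec2-eq2.2} is equivalent to $\Delta(s,t,z)\le C(\sqrt{t-s}+|z|)$ with $z:=x-y$. The plan is to split the integrand of $\Delta$ as $g(r)=g_1(r)+g_2(r)$, where $g_1$ is obtained by replacing the exponential in the middle term by $1$:
$$
g_1(r)=\frac1{\sqrt{2(t-r)}}+\frac1{\sqrt{2(s-r)}}-\frac2{\sqrt{t+s-2r}},
\qquad
g_2(r)=\frac2{\sqrt{t+s-2r}}\Bigl(1-e^{-\frac{z^2}{2(t+s-2r)}}\Bigr).
$$
The point is that a crude term-by-term bound fails in each piece: the three terms of $g_1$ separately integrate to quantities of order $\sqrt s$, while the $g_2$-integral, if one bounds $1-e^{-w}$ by $1$ alone, diverges logarithmically as $t\downarrow s$; so a cancellation must be exploited in both halves.

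For the $g_1$-part I would integrate in closed form,
$$
\int_0^s g_1(r)\,dr=\sqrt2\,\bigl(\sqrt t+\sqrt s\bigr)-2\sqrt{t+s}+(2-\sqrt2)\sqrt{t-s},
$$
and use AM--GM, $2\sqrt{ts}\le t+s$, which gives $\bigl(\sqrt t+\sqrt s\bigr)^2\le 2(t+s)$, hence $\sqrt2\,(\sqrt t+\sqrt s)\le 2\sqrt{t+s}$; thus the first two terms combine into something non-positive and $\int_0^s g_1(r)\,dr\le(2-\sqrt2)\sqrt{t-s}$. (Equivalently, convexity of $v\mapsto v^{-1/2}$ shows $g_1\ge 0$, which is why $\Delta\ge 0$, but the explicit primitive is what produces the $\sqrt{t-s}$ rate.) For the $g_2$-part I would change variables $\rho=t+s-2r$ to get
$$
\int_0^s g_2(r)\,dr=\int_{t-s}^{t+s}\frac1{\sqrt\rho}\Bigl(1-e^{-\frac{z^2}{2\rho}}\Bigr)\,d\rho
\le\int_0^{\infty}\frac1{\sqrt\rho}\,\min\!\Bigl(1,\frac{z^2}{2\rho}\Bigr)\,d\rho,
$$
using $1-e^{-w}\le\min(1,w)$ and enlarging the domain to $(0,\infty)$; the scaling $\rho=\frac{z^2}{2}\sigma$ then turns the last integral into $\frac{|z|}{\sqrt2}\int_0^\infty\sigma^{-1/2}\min(1,\sigma^{-1})\,d\sigma=2\sqrt2\,|z|$. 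Adding the two bounds gives $\Delta(s,t,x-y)\le(2-\sqrt2)\sqrt{t-s}+2\sqrt2\,|x-y|$, and inserting this into~\eqref{sec2-eq2.1} yields~\eqref{sec2-eq2.2} (with, say, $C=2/\sqrt\pi$).

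The main obstacle is therefore structural rather than computational: one must spot the two cancellations. For $g_1$ it is the convexity/AM--GM inequality that upgrades the naive $O(\sqrt s)$ bound to the correct $O(\sqrt{t-s})$; for $g_2$ it is the splitting $1-e^{-w}\le\min(1,w)$ --- equivalently cutting the $\rho$-integral at the scale $\rho\sim z^2$ --- that keeps the dependence linear in $|x-y|$ rather than logarithmically singular as $t\to s$. If one only wanted \emph{some} joint modulus of continuity, the weaker bound $1-e^{-w}\le C_\theta w^{\theta}$ with a fixed $\theta\in(0,1)$ would already suffice; the $\min(1,w)$ estimate is what delivers the sharp linear term claimed in~\eqref{sec2-eq2.2}.
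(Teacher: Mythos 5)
Your proof is correct and follows essentially the same route as the paper: the same split of $\Delta$ into the part with the exponential replaced by $1$ (integrated in closed form and bounded by $C\sqrt{t-s}$ via an elementary square-root inequality) and the remainder (handled by a change of variables and extension of the integration domain to obtain a bound linear in $|x-y|$). The only differences are cosmetic --- you use AM--GM and $1-e^{-w}\le\min(1,w)$ where the paper uses subadditivity of $\sqrt{\cdot}$ and the exact value $\int_0^\infty(1-e^{-r^2/2})r^{-2}\,dr=\sqrt{\pi/2}$, yielding slightly different constants.
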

\begin{proof}
Notice that
\begin{align*}
\int_0^s\frac{2}{\sqrt{t+s-2r}}&\left(
1-\exp\left\{-\frac{u^2}{2(t+s-2r)}\right\}\right)dr\\ &=|u|\int_{\frac{|u|}{\sqrt{t+s}}}^{\frac{|u|}{\sqrt{t-s}}}
\left(1-e^{-\frac{r^2}{2}}\right)\frac{dr}{r^2}\leq
|u|\int_0^{+\infty}
\left(1-e^{-\frac{r^2}{2}}\right)\frac{dr}{r^2}=|u|\sqrt{\frac{\pi}2}
\end{align*}
for all $t\geq s>0$ and $u\in {\mathbb R}$. We get
\begin{align*}
\Delta(s,t,u)&=\int_0^s\left(
\frac1{\sqrt{2(t-r)}}-\frac{2}{\sqrt{t+s-2r}}
+\frac1{\sqrt{2(s-r)}}\right)dr\\
&\qquad+\int_0^s\frac{2}{\sqrt{t+s-2r}}\left(
1-\exp\left\{-\frac{u^2}{2(t+s-2r)}\right\}\right)dr\\
&=\sqrt{2t}+\sqrt{2s}+(2-\sqrt{2})\sqrt{t-s}-2\sqrt{t+s}\\
&\qquad+\int_0^s\frac{2}{\sqrt{t+s-2r}}\left(
1-\exp\left\{-\frac{u^2}{2(t+s-2r)}\right\}\right)dr\\
&\leq C\left((3-\sqrt{2})\sqrt{t-s}+|u|\right)
\end{align*}
for all $t\geq s>0,u\in {\mathbb R}$ by the next estimate:
\begin{align*}
0\leq \sqrt{2t}+\sqrt{2s}&+2\sqrt{t-s}-2\sqrt{t+s}\\
&=2\sqrt{t-s} +\left(\sqrt{2t}-\sqrt{t+s}\right)+\sqrt{2s}-\sqrt{t+s}\leq 3\sqrt{t-s}.
\end{align*}
It follows from~\eqref{sec2-eq2.1} that
\begin{equation}\label{sec2-eq2.4}
E[(u(t,x)-u(s,y))^2]\leq C\left(\sqrt{t-s}+|x-y|\right)
\end{equation}
for all $t\geq s>0,x,y\in {\mathbb R}$. This completes the proof.
\end{proof}

\begin{lemma}
For all $t,s,r>0$ and $x\in {\mathbb R}$ we have
\begin{align*}
|E\left[u(r,x)(u(t,x)-u(s,x))\right]|&\leq C\sqrt{|t-s|}.
\end{align*}
\end{lemma}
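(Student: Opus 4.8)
The plan is to exploit the explicit form of the covariance function along the diagonal $y=x$. From the formula for $R_{x,y}(s,t)$ established above, setting $y=x$ kills the Gaussian factor and leaves a one–dimensional integral that can be evaluated in closed form: for any $a,b>0$,
$$
Eu(a,x)u(b,x)=\frac1{\sqrt{2\pi}}\int_0^{a\wedge b}\frac{dv}{\sqrt{a+b-2v}}=\frac1{\sqrt{2\pi}}\left(\sqrt{a+b}-\sqrt{|a-b|}\right),
$$
where the last equality follows from the antiderivative $v\mapsto -\sqrt{a+b-2v}$ together with the symmetry in $a,b$, so that both cases $a\ge b$ and $a<b$ are covered at once. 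This single identity is what makes the whole argument uniform in the relative positions of $r,s,t$.

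Given this, I would assume without loss of generality $t\ge s$ (the case $t=s$ being trivial) and write
$$
E\left[u(r,x)(u(t,x)-u(s,x))\right]=\frac1{\sqrt{2\pi}}\Bigl[\bigl(\sqrt{t+r}-\sqrt{s+r}\bigr)-\bigl(\sqrt{|t-r|}-\sqrt{|s-r|}\bigr)\Bigr].
$$
The estimate then reduces to the elementary inequality $|\sqrt{\alpha}-\sqrt{\beta}|\le\sqrt{|\alpha-\beta|}$ for $\alpha,\beta\ge 0$. Applied to the first difference with $\alpha=t+r,\ \beta=s+r$ it gives a bound $\sqrt{t-s}$; applied to the second difference with $\alpha=|t-r|,\ \beta=|s-r|$ it gives $\sqrt{\bigl||t-r|-|s-r|\bigr|}\le\sqrt{|t-s|}$ by the reverse triangle inequality. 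Adding the two contributions yields
$$
\left|E\left[u(r,x)(u(t,x)-u(s,x))\right]\right|\le\frac{2}{\sqrt{2\pi}}\sqrt{|t-s|}=C\sqrt{|t-s|},
$$
which is the claim.

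The argument is short and there is no genuine obstacle; the only care needed is bookkeeping, namely verifying the closed-form covariance and checking that the term $\sqrt{|t-r|}-\sqrt{|s-r|}$ is controlled correctly no matter where $r$ sits relative to $s$ and $t$. An alternative, more pedestrian route would split into the cases $r\le s\le t$, $s\le r\le t$ and $s\le t\le r$, write $\phi(\cdot):=Eu(r,x)u(\cdot,x)$ explicitly in each, and bound $\phi(t)-\phi(s)$ term by term; this reproduces the same constant but is longer, so I would present the case-free version above.
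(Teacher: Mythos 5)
Your proposal is correct and follows essentially the same route as the paper: both reduce the expression to $\frac1{\sqrt{2\pi}}\bigl(\sqrt{t+r}-\sqrt{|t-r|}-\sqrt{s+r}+\sqrt{|s-r|}\bigr)$ via the closed-form covariance $Eu(a,x)u(b,x)=\frac1{\sqrt{2\pi}}(\sqrt{a+b}-\sqrt{|a-b|})$ and then bound the two differences using $|\sqrt{\alpha}-\sqrt{\beta}|\le\sqrt{|\alpha-\beta|}$ together with the reverse triangle inequality. The only difference is cosmetic: you track the constant as $\frac{2}{\sqrt{2\pi}}$ while the paper settles for the cruder $3$.
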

\begin{proof}
For all $t,s,r>0$ and $x\in {\mathbb R}$, we have
\begin{align*}
E[u(r,x)(u(t,x)&-u(s,x))]=Eu(r,x)u(t,x)-Eu(r,x)u(s,x)\\
&=R_{x,x}(r,t)-R_{x,x}(r,s)\\
&=\frac1{\sqrt{2\pi}}\left(\sqrt{t+r}-\sqrt{|t-r|}-\sqrt{s+r} +\sqrt{|s-r|}\right),
\end{align*}
which gives
\begin{align*}
|E[u(r,x)&(u(t,x)-u(s,x))]|\\
&\leq \left|\sqrt{t+r}-\sqrt{s+r}\right| +\left|\sqrt{|t-r|}-\sqrt{|s-r|}\right|\leq 3\sqrt{|t-s|}
\end{align*}
for all $t,s,r>0$ and $x\in {\mathbb R}$.
\end{proof}
\begin{lemma}\label{lem2.3}
For all $t>0$ and $x,y,z\in {\mathbb R}$ we have
\begin{align*}
|E\left[u(t,x)(u(t,y)-u(t,z))\right]|&\leq C|y-z|.
\end{align*}
\end{lemma}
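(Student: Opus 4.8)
The plan is to reduce everything to the explicit covariance formula at equal times. Taking $s=t$ in the identity for $R_{x,y}(s,t)$ established at the beginning of this section, we get
\begin{equation*}
E[u(t,x)u(t,y)]=\frac{1}{2\sqrt{\pi}}\int_0^t\frac{1}{\sqrt{t-r}}\exp\left\{-\frac{(x-y)^2}{4(t-r)}\right\}dr ,
\end{equation*}
so, after the substitution $\rho=t-r$,
\begin{equation*}
E\left[u(t,x)(u(t,y)-u(t,z))\right]=\frac{1}{2\sqrt{\pi}}\int_0^{t}\frac{1}{\sqrt{\rho}}\left(e^{-\frac{(x-y)^2}{4\rho}}-e^{-\frac{(x-z)^2}{4\rho}}\right)d\rho .
\end{equation*}
Write $a=|x-y|$ and $b=|x-z|$; by symmetry we may assume $a\geq b$, in which case the integrand keeps a constant (non-positive) sign, so the quantity we must bound equals $\frac{1}{2\sqrt{\pi}}\int_0^{t}\rho^{-1/2}\bigl(e^{-b^2/(4\rho)}-e^{-a^2/(4\rho)}\bigr)d\rho$, which — the integrand now being nonnegative — is dominated by the same integral taken over $(0,\infty)$.

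The second step is to evaluate that improper integral. I would use the elementary identity $e^{-b^2/(4\rho)}-e^{-a^2/(4\rho)}=\int_{b^2}^{a^2}\frac{1}{4\rho}e^{-v/(4\rho)}dv$ together with Fubini's theorem to rewrite the integral as
\begin{equation*}
\frac{1}{4}\int_{b^2}^{a^2}\left(\int_0^\infty\rho^{-3/2}e^{-v/(4\rho)}d\rho\right)dv .
\end{equation*}
The inner integral is computed by the change of variable $w=v/(4\rho)$ and equals $2\sqrt{\pi}\,v^{-1/2}$ (a $\Gamma(1/2)$-evaluation), so the whole expression reduces to $\frac{\sqrt{\pi}}{2}\int_{b^2}^{a^2}v^{-1/2}dv=\sqrt{\pi}\,(a-b)$. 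Substituting back yields
\begin{equation*}
\left|E\left[u(t,x)(u(t,y)-u(t,z))\right]\right|\leq\frac{1}{2}\bigl(a-b\bigr)=\frac{1}{2}\bigl||x-y|-|x-z|\bigr|\leq\frac{1}{2}|y-z|
\end{equation*}
by the triangle inequality, which is the assertion, with the explicit constant $C=\tfrac12$ and the bound uniform in $t$.

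The only point requiring care — and really the only obstacle — is that the single integral $\int_0^\infty\rho^{-1/2}e^{-c^2/(4\rho)}d\rho$ diverges at infinity, so one cannot estimate the two exponential terms separately; the argument must keep them paired, which is precisely what the sign remark (justifying the reduction to $(0,\infty)$) and the Fubini rewriting (which makes the tail cancellation explicit) are designed to do. Apart from that, the proof is a routine computation.
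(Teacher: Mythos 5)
Your proof is correct, and it reaches the conclusion by a genuinely different middle step than the paper. Both arguments share the same endpoints: start from the equal-time covariance $R_{x,y}(t,t)=\frac{1}{2\sqrt{\pi}}\int_0^t(t-r)^{-1/2}e^{-(x-y)^2/(4(t-r))}dr$, reduce the problem to bounding a quantity that depends only on $a=|x-y|$ and $b=|x-z|$ by a multiple of $|a-b|$, and finish with the reverse triangle inequality $\bigl||x-y|-|x-z|\bigr|\leq|y-z|$. The paper gets there by substituting $s=|u|\sqrt{t}/\sqrt{\rho}$ to write the covariance as $c\sqrt{t}\,f(|x-y|)$ with $f(x)=x\int_x^{\infty}s^{-2}e^{-s^2/(4t)}ds$, and then applying the mean value theorem with the uniform bound $|f'|\leq\frac{\sqrt{\pi}}{2\sqrt{t}}$ (the $\sqrt{t}$ factors cancel to give a $t$-independent constant). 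You instead keep the time integral, use monotonicity of the single-signed integrand to pass from $(0,t)$ to $(0,\infty)$, and evaluate the improper integral exactly via Fubini and the $\Gamma(1/2)$ identity, obtaining $\int_0^\infty\rho^{-1/2}\bigl(e^{-b^2/(4\rho)}-e^{-a^2/(4\rho)}\bigr)d\rho=2\sqrt{\pi}(a-b)$ in closed form. Your route buys an explicit sharp constant ($C=\tfrac12$, attained as $t\to\infty$) with no mean-value estimate, at the cost of the Fubini/Gamma computation; the paper's route stays within a finite-$t$ calculus estimate but requires identifying the right auxiliary function $f$ and differentiating it. Your closing remark about why the two exponentials cannot be integrated separately over $(0,\infty)$ correctly identifies the one point where a careless split would fail. (Incidentally, your constant $\tfrac12$ and the paper's $\tfrac14$ differ because the paper appears to drop a factor of $2$ in its change of variables; this does not affect the validity of either argument, since the lemma only asserts some constant $C$.)
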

\begin{proof}
For all $t>0$ and $x,y,z\in {\mathbb R}$, we have
\begin{align*}
E[&u(t,x)(u(t,y)-u(t,z))]=Eu(t,x)u(t,y)-Eu(t,x)u(t,z)\\
&=R_{x,y}(t,t)-R_{x,z}(t,t)\\
&=\frac1{2\sqrt{\pi}}\left(\int_0^t\frac1{\sqrt{t-r}}
\exp\left\{-\frac{(x-y)^2}{4(t-r)}\right\}dr
-\int_0^t\frac1{\sqrt{t-r}} \exp\left\{-\frac{(x-z)^2}{4(t-r)}\right\}dr\right)\\
&=\frac{\sqrt{t}}{2\sqrt{\pi}}\left(|x-y| \int_{|x-y|}^{+\infty}\frac1{s^2}
e^{-\frac{s^2}{4t}}ds
-|x-z|\int_{|x-z|}^{+\infty}\frac1{s^2}
e^{-\frac{s^2}{4t}}ds\right).
\end{align*}
Consider the function $f:{\mathbb R}_{+}\to {\mathbb R}_{+}$ defined by
$$
f(x)=x\int_x^{+\infty}\frac1{s^2}e^{-\frac{s^2}{4t}}ds
=e^{-\frac{x^2}{4t}}-
\frac{x}{2t}\int_x^\infty e^{-\frac{r^2}{4t}}dr
$$
Then, by Mean value theorem we have
\begin{align*}
\left|f(u)-f(v)\right|& =\frac1{2t}|u-v|\int_\xi^{+\infty}e^{-\frac{s^2}{4t}}ds\\
&\leq \frac1{2t}|u-v| \int_0^{+\infty}e^{-\frac{s^2}{4t}}ds
\leq \frac{\sqrt{\pi}}{2\sqrt{t}}|u-v|
\end{align*}
for all $u,v\geq 0$ and some $\xi$ between $u$ and $v$. It follows that
$$
|E[u(t,x)(u(t,y)-u(t,z))]|\leq \frac14||x-y|-|x-z||\leq \frac14|y-z|
$$
for all $t>0$ and $x,y,z\in {\mathbb R}$.
\end{proof}
\begin{lemma}\label{lem2.4}
For all $t>s>t'>s'>0$ and $x\in {\mathbb R}$ we have
\begin{equation}\label{sec2-eq2.6}
|E\left[(u(t,x)-u(s,x))(u(t',x)-u(s',x))\right]|
\leq \frac{C(t'-s')\sqrt{t-s}}{\sqrt{ts(s-s')(t-t')}}
\end{equation}
\end{lemma}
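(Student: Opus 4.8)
The plan is to compute the expectation $J:=E[(u(t,x)-u(s,x))(u(t',x)-u(s',x))]$ explicitly from the covariance of $u$ and then reduce everything to elementary inequalities for sums and differences of square roots. (A soft estimate via the Cauchy--Schwarz inequality and Lemma~\ref{lem2.1} only gives $|J|\le C(t-s)^{1/4}(t'-s')^{1/4}$, which carries no off-diagonal decay and is far too weak.) By bilinearity of the covariance and $E[u(a,x)u(b,x)]=\frac1{\sqrt{2\pi}}\bigl((a+b)^{1/2}-|a-b|^{1/2}\bigr)$, which is \eqref{sec1-eq1.0},
\[
J=E[u(t,x)u(t',x)]-E[u(t,x)u(s',x)]-E[u(s,x)u(t',x)]+E[u(s,x)u(s',x)].
\]
Since $t>s>t'>s'>0$, each of $t-t',t-s',s-t',s-s'$ is positive, so all the absolute values disappear; setting $g(a,b):=E[u(a,x)u(b,x)]=\frac1{\sqrt{2\pi}}\bigl((a+b)^{1/2}-(a-b)^{1/2}\bigr)$ for $a>b$ and integrating the continuous bounded kernel $\partial_a\partial_b g(a,b)=-\frac1{4\sqrt{2\pi}}\bigl((a+b)^{-3/2}+(a-b)^{-3/2}\bigr)$ over the rectangle $[s,t]\times[s',t']$, on which $a-b\ge s-t'>0$, we obtain
\[
J=-\frac1{4\sqrt{2\pi}}\int_s^t\!\!\int_{s'}^{t'}\Bigl((a+b)^{-3/2}+(a-b)^{-3/2}\Bigr)\,db\,da ,
\]
so that $J\le0$ and, since $a+b\ge a-b>0$ on the rectangle,
\[
|J|\le\frac1{2\sqrt{2\pi}}\int_s^t\!\!\int_{s'}^{t'}(a-b)^{-3/2}\,db\,da .
\]

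Next I would evaluate this double integral in closed form,
\[
\int_s^t\!\!\int_{s'}^{t'}(a-b)^{-3/2}\,db\,da=4\bigl(\sqrt{t-t'}+\sqrt{s-s'}-\sqrt{s-t'}-\sqrt{t-s'}\bigr),
\]
and then estimate the right-hand side by applying the identity $\sqrt p-\sqrt q=\frac{p-q}{\sqrt p+\sqrt q}$ twice. Grouping as $(\sqrt{t-t'}-\sqrt{s-t'})-(\sqrt{t-s'}-\sqrt{s-s'})$ pulls out the factor $t-s$, since $(t-t')-(s-t')=(t-s')-(s-s')=t-s$; applying the identity once more to the resulting difference of two reciprocals pulls out the factor $t'-s'$, since there the two cross-differences both equal $t'-s'$. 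The upshot is the exact identity
\[
\sqrt{t-t'}+\sqrt{s-s'}-\sqrt{s-t'}-\sqrt{t-s'}=(t-s)(t'-s')\,\frac{\dfrac1{\sqrt{t-s'}+\sqrt{t-t'}}+\dfrac1{\sqrt{s-s'}+\sqrt{s-t'}}}{\bigl(\sqrt{t-t'}+\sqrt{s-t'}\bigr)\bigl(\sqrt{t-s'}+\sqrt{s-s'}\bigr)} ,
\]
after which it remains only to bound the last quotient. From its denominator one retains the factor $1/\sqrt{(t-t')(s-s')}$; a further $1/\sqrt{t-s}$ comes from a numerator reciprocal after using $\sqrt{t-s'}\ge\sqrt{t-s}$; and the remaining scalar factors (in particular the $\sqrt{ts}$ in the statement) are absorbed by means of the increment inequalities $t-s\le t-t'$, $t-s\le t-s'$, $t'-s'\le s-s'$ together with the trivial $t-s'\le t$, $s-t'\le s$, applied inside the relevant sums $\sqrt p+\sqrt q$. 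Finally, the term carried by $(a+b)^{-3/2}$ has the same structure — with $t\pm t'$ replaced by $t+t',t+s',s+t',s+s'$, which again form a maximum, two middle terms and a minimum, with the same cross-differences $t-s$ and $t'-s'$ — and since each of those sums exceeds the corresponding difference, it merely enlarges the constant. Collecting the pieces yields \eqref{sec2-eq2.6}.

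The step I expect to be the main obstacle is precisely this last bookkeeping: one has to distribute a product of $\sqrt p+\sqrt q$ factors among the target factors $t'-s'$, $\sqrt{t-s}$, $\sqrt{t-t'}$, $\sqrt{s-s'}$, $\sqrt{ts}$ in exactly the right way, and a careless intermediate move — for example replacing $(a-b)^{-3/2}$ by $(a-b)^{-1}$ and integrating, which produces a logarithm $\int_s^t\frac{da}{a-t'}=\log\frac{t-t'}{s-t'}$ instead of a power — wrecks the bound. One also has to verify that $t>s>t'>s'>0$ genuinely supplies each of the increment inequalities used. An equivalent route that avoids the closed form is to carry out only the inner integral, $\int_{s'}^{t'}(a-b)^{-3/2}\,db=\frac{2(t'-s')}{\sqrt{(a-t')(a-s')}\,\bigl(\sqrt{a-t'}+\sqrt{a-s'}\bigr)}$, bound its denominator from below using $a\ge s$ (so that $a-s'\ge s-s'$ and $a-t'\ge s-t'$), and only then integrate in $a$ over $[s,t]$ — once more being careful not to create a logarithmic factor.
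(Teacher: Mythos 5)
Your route is genuinely different from the paper's, and cleaner up to the last step. The paper writes the covariance as $\frac1{\sqrt{2\pi}}\bigl(f(t')-f(s')\bigr)$ with $f(x)=\sqrt{t+x}-\sqrt{t-x}-\sqrt{s+x}+\sqrt{s-x}$, applies the mean value theorem once to extract $(t'-s')$, and then bounds $f'(\xi)$ for some $\xi\in[s',t']$ by elementary square-root manipulations. You instead represent the rectangular increment as $\int_s^t\int_{s'}^{t'}\partial_a\partial_b g\,db\,da$ and factor the resulting closed form exactly. I checked your mixed partial, the sign of $J$, the evaluation of the double integral, and the two-fold factorization extracting $(t-s)(t'-s')$: all correct. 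Your exact identity is in fact sharper than the paper's mean-value bound, which loses information by evaluating at an unknown $\xi\le t'$ and so ends up with $s-t'$ where your denominator retains $s-s'$.

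However, the final bookkeeping step --- the one you yourself flag as the main obstacle --- cannot be completed, and the specific mechanism you propose is wrong-directional: using $t-s'\le t$ and $s-t'\le s$ inside the sums $\sqrt p+\sqrt q$ bounds those sums from \emph{above}, and since they sit in the \emph{denominator} of your exact identity this enlarges the fraction rather than producing the required factor $1/\sqrt{ts}$. No repair is possible, because the stated inequality is false: under $(t,s,t',s')\mapsto(\lambda t,\lambda s,\lambda t',\lambda s')$ the left side of \eqref{sec2-eq2.6} scales like $\lambda^{1/2}$ while the right side scales like $\lambda^{-1/2}$, so no universal constant works (take $t=4N$, $s=3N$, $t'=2N$, $s'=N$: the left side is $\asymp\sqrt N$ and the right side is $\asymp N^{-1/2}$). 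You should not feel bad about this --- the paper's own proof has exactly the same gap: its penultimate display establishes $|J|\le \frac12(t'-s')\bigl(\frac{\sqrt{t+\xi}-\sqrt{s+\xi}}{\sqrt{(t+\xi)(s+\xi)}}+\frac{\sqrt{t-\xi}-\sqrt{s-\xi}}{\sqrt{(s-\xi)(t-\xi)}}\bigr)\le C(t'-s')\sqrt{t-s}\bigl(\frac1{\sqrt{ts}}+\frac1{\sqrt{(s-t')(t-t')}}\bigr)$, from which the asserted final line does not follow. What both arguments actually prove is the dimensionally consistent bound $|J|\le C(t'-s')\sqrt{t-s}\,\bigl((s-t')(t-t')\bigr)^{-1/2}$ (your version even improves $s-t'$ to $s-s'$ in part), and that corrected statement is what should be proved and is what the applications in Propositions~\ref{prop4.1} and~\ref{lem6.1} really need.
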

\begin{proof}
For all $t>s>t'>s'>0$ and $x\in {\mathbb R}$ we have
\begin{align*}
E[(u(t,x)&-u(s,x))(u(t',x)-u(s',x))]\\ &=R_{x,x}(t,t')-R_{x,x}(s,t')-R_{x,x}(t,s')+R_{x,x}(s,s')\\
&=\frac1{\sqrt{2\pi}}\left(
\sqrt{t+t'}-\sqrt{t-t'}-\sqrt{s+t'}+\sqrt{s-t'}\right.\\
&\qquad\qquad\left.-\sqrt{t+s'}+\sqrt{t-s'} +\sqrt{s+s'}-\sqrt{s-s'}\right).
\end{align*}
Consider the function
$$
f(x)=\sqrt{t+x}-\sqrt{t-x}-\sqrt{s+x}+\sqrt{s-x}
$$
with $x\in [0,s]$. Then, we have
\begin{align*}
E[(u(t,x)-u(s,x))(u(t',x)-u(s',x))] =\frac1{\sqrt{2\pi}}\left(f(t')-f(s')\right),
\end{align*}
and by Mean value theorem
\begin{align*}
\left|f(t')-f(s')\right|&=\frac12(t'-s')
\left|\frac1{\sqrt{t+\xi}}-\frac1{\sqrt{t-\xi}}
-\frac1{\sqrt{s+\xi}}+\frac1{\sqrt{s-\xi}}\right|\\
&\leq \frac12(t'-s')\left(\frac{\sqrt{t+\xi}-\sqrt{s+\xi}}{
\sqrt{t+\xi}\sqrt{s+\xi}}+\frac{\sqrt{t-\xi}-\sqrt{s-\xi}}{ \sqrt{s-\xi}\sqrt{t-\xi}}\right)\\
&\leq \frac{C(t'-s')\sqrt{t-s}}{\sqrt{ts(s-s')(t-t')}}
\end{align*}
for some $s'\leq \xi\leq t'$, which shows the lemma.
\end{proof}

\begin{lemma}\label{lem2.5}
For all $t>0$ and $x>y>x'>y'$ we have
\begin{equation}\label{sec2-eq2.7}
|E\left[(u(t,x)-u(t,y))(u(t,x')-u(t,y'))\right]|
\leq \frac1{4\sqrt{t\pi}}(x-y)(x'-y')e^{-\frac{(y-x')^2}{4t}}.
\end{equation}
\end{lemma}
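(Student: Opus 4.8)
The plan is to follow the scheme already used for Lemmas~\ref{lem2.3} and~\ref{lem2.4}: expand the product of the two increments into four covariances, express each of them through a single smooth function of the corresponding spatial gap, and then apply the mean value theorem once in each of the two spatial variables. Since $x>y>x'>y'$, every gap occurring below is strictly positive, so using the representation of $R_{a,b}(t,t)$ obtained in the proof of Lemma~\ref{lem2.3}, namely $R_{a,b}(t,t)=\frac{\sqrt t}{2\sqrt\pi}\,f(|a-b|)$ with
$$
f(w)=w\int_w^{\infty}\frac1{s^2}e^{-\frac{s^2}{4t}}\,ds=e^{-\frac{w^2}{4t}}-\frac{w}{2t}\int_w^{\infty}e^{-\frac{r^2}{4t}}\,dr ,
$$
and expanding $(u(t,x)-u(t,y))(u(t,x')-u(t,y'))$, one gets
$$
E\big[(u(t,x)-u(t,y))(u(t,x')-u(t,y'))\big]=\frac{\sqrt t}{2\sqrt\pi}\Big(f(x-x')-f(x-y')-f(y-x')+f(y-y')\Big).
$$

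Next I would note that $f$ is smooth on $(0,\infty)$ with
$$
f'(w)=-\frac1{2t}\int_w^{\infty}e^{-\frac{r^2}{4t}}\,dr,\qquad f''(w)=\frac1{2t}e^{-\frac{w^2}{4t}}\ge 0 ,
$$
and introduce $g(a):=f(a-x')-f(a-y')$, so that the combination above is precisely $g(x)-g(y)$. Applying the mean value theorem first to $g$ on $[y,x]$ gives $g(x)-g(y)=g'(\xi)(x-y)$ for some $\xi\in(y,x)$; applying it again to $f'$ on the interval with endpoints $\xi-y'$ and $\xi-x'$ gives $g'(\xi)=f'(\xi-x')-f'(\xi-y')=f''(\eta)\big((\xi-x')-(\xi-y')\big)=-f''(\eta)(x'-y')$ for some $\eta$ lying between $\xi-x'$ and $\xi-y'$. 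Hence
$$
E\big[(u(t,x)-u(t,y))(u(t,x')-u(t,y'))\big]=-\frac{\sqrt t}{2\sqrt\pi}\,f''(\eta)\,(x-y)(x'-y')=-\frac1{4\sqrt{\pi t}}\,e^{-\frac{\eta^2}{4t}}(x-y)(x'-y') .
$$

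The one step that needs a little attention is the location of $\eta$: since $\xi>y$ and $x'>y'$ we have $\eta\ge \xi-x'>y-x'>0$, and $r\mapsto e^{-r^2/(4t)}$ is decreasing on $[0,\infty)$, so $e^{-\eta^2/(4t)}\le e^{-(y-x')^2/(4t)}$. Taking absolute values and inserting this bound gives
$$
\big|E\big[(u(t,x)-u(t,y))(u(t,x')-u(t,y'))\big]\big|\le \frac1{4\sqrt{\pi t}}(x-y)(x'-y')e^{-\frac{(y-x')^2}{4t}},
$$
which is the assertion; in particular the covariance comes out negative, as expected for increments of $x\mapsto u(t,x)$ over the disjoint intervals $[y',x']$ and $[y,x]$. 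The only genuine obstacle is the bookkeeping: getting the signs in the iterated mean value theorem right and checking that the intermediate point $\eta$ is bounded below by $y-x'$, so that the Gaussian factor can be replaced by its value at that extreme gap.
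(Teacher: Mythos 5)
Your proof is correct and follows essentially the same route as the paper: both reduce the four-term covariance to $f(x-x')-f(x-y')-f(y-x')+f(y-y')$ with the same kernel $f$ and apply the mean value theorem in the first variable, and where you invoke the mean value theorem a second time on $f'$, the paper instead bounds the resulting integral $\int_{\xi-x'}^{\xi-y'}e^{-s^{2}/(4t)}\,ds$ by $(x'-y')$ times the integrand at the left endpoint, which is the same estimate. The lower bound $\eta\ge \xi-x'\ge y-x'>0$ that you single out as the delicate point is exactly the paper's final step as well.
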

\begin{proof}
We have
\begin{align*}
E[&(u(t,x)-u(t,y))(u(t,x')-u(t,y'))]\\
&=R_{x,x'}(t,t)-R_{x,y'}(t,t)-R_{y,x'}(t,t)+R_{y,y'}(t,t)\\ &=\frac1{2\sqrt{\pi}}\left(\int_0^t\frac1{\sqrt{r}}
\exp\left\{-\frac{(x-x')^2}{4r}\right\}dr
-\int_0^t\frac1{\sqrt{r}}
\exp\left\{-\frac{(x-y')^2}{4r}\right\}dr\right.\\
&\qquad\qquad\left.-\int_0^t\frac1{\sqrt{r}}
\exp\left\{-\frac{(y-x')^2}{4r}\right\}dr
+\int_0^t\frac1{\sqrt{r}}
\exp\left\{-\frac{(y-y')^2}{4r}\right\}dr\right)\\
&=\frac{\sqrt{t}}{2\sqrt{\pi}}\left((x-x')\int_{(x-x')}^{ +\infty}\frac1{s^2}
e^{-\frac{s^2}{4t}}ds
-(x-y')\int_{(x-y')}^{+\infty}\frac1{s^2}
e^{-\frac{s^2}{4t}}ds\right.\\
&\qquad\qquad\left.-(y-x')\int_{(y-x')}^{+\infty}\frac1{s^2}
e^{-\frac{s^2}{4t}}ds+(y-y')\int_{(y-y')}^{+\infty}\frac1{s^2}
e^{-\frac{s^2}{4t}}ds\right)
\end{align*}
for all $t>0$ and $x>y>x'>y'$. Similar to the proof of Lemma~\ref{lem2.3} we define the function $f:{\mathbb R}_{+}\to {\mathbb R}_{+}$ by
$$
f(x)=x\int_x^{+\infty}\frac1{s^2}e^{-\frac{s^2}{4t}}ds.
$$
Then, we have
\begin{align*}
|E[&(u(t,x)-u(t,y))(u(t,x')-u(t,y'))]|\\
&=\frac{\sqrt{t}}{2\sqrt{\pi}}\left|f(x-x')-f(x-y')-f(y-x') +f(y-y')\right|\\
&=\frac{\sqrt{t}}{2\sqrt{\pi}}(x-y)\left|f'(\xi-x')-f'(\xi-y')\right|\\
&=\frac1{4\sqrt{t}\sqrt{\pi}}(x-y)\left|
\int_{\xi-x'}^{+\infty}e^{-\frac{s^2}{4t}}ds -\int_{\xi-y'}^{+\infty}e^{-\frac{s^2}{4t}}ds\right|\\
&=\frac1{2\sqrt{\pi}}(x-y)
\int_{\xi-x'}^{\xi-y'}\frac1{2\sqrt{t}}e^{-\frac{s^2}{4t}}ds\\
&\leq \frac1{4\sqrt{t\pi}}(x-y)(x'-y')e^{-\frac{(\xi-x')^2}{4t}}\\
&\leq \frac1{4\sqrt{t\pi}}(x-y)(x'-y')e^{-\frac{(y-x')^2}{4t}}
\end{align*}
for some $\xi\in [y,x]$ by Mean Value Theorem. This completes the proof.
\end{proof}
\begin{lemma}\label{lem2.6}
For all $t>s>0$ and $x\in {\mathbb R}$ denote
$\sigma^2_{t,x}=E\left[u(t,x)^2\right]$, $\sigma^2_{s,x}=E\left[u(s,x)^2\right]$,  $\mu_{t,s,x}=E\left[u(t,x)u(s,x)\right]$. Then we have
\begin{equation}\label{sec2-eq2.8}
\frac1{\pi}\sqrt{s(t-s)}\leq \sigma^2_{t,x}\sigma^2_{s,x}-\mu_{t,s,x}^2\leq \frac3{\pi}\sqrt{s(t-s)}.
\end{equation}
\end{lemma}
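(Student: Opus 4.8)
The plan is to reduce~\eqref{sec2-eq2.8} to an elementary inequality in the real variables $s<t$, using the explicit covariance established at the start of this section. First, from $R_{x,x}(s,t)=\frac1{\sqrt{2\pi}}\bigl(\sqrt{t+s}-\sqrt{t-s}\bigr)$ (see also~\eqref{sec1-eq1.0}) one has, for $t>s>0$,
$$
\sigma_{t,x}^2=\sqrt{\tfrac{t}{\pi}},\qquad \sigma_{s,x}^2=\sqrt{\tfrac{s}{\pi}},\qquad \mu_{t,s,x}=\frac1{\sqrt{2\pi}}\bigl(\sqrt{t+s}-\sqrt{t-s}\bigr),
$$
so that, using $(\sqrt{t+s})^2+(\sqrt{t-s})^2=2t$,
$$
\sigma_{t,x}^2\sigma_{s,x}^2-\mu_{t,s,x}^2 =\frac{\sqrt{ts}}{\pi}-\frac1{\pi}\bigl(t-\sqrt{t^2-s^2}\bigr) =\frac1{\pi}\,D(s,t),\qquad D(s,t):=\sqrt{ts}-t+\sqrt{t^2-s^2}.
$$
Thus it suffices to prove $\sqrt{s(t-s)}\le D(s,t)\le 3\sqrt{s(t-s)}$ for all $t>s>0$.

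I would carry this out with the three nonnegative ``defects''
$$
t-\sqrt{t^2-s^2}=\frac{s^2}{t+\sqrt{t^2-s^2}},\qquad \sqrt{ts}-\sqrt{s(t-s)}=\frac{s^2}{\sqrt{ts}+\sqrt{s(t-s)}},\qquad t-\sqrt{ts}=\sqrt t\bigl(\sqrt t-\sqrt s\bigr),
$$
each $\ge 0$ for $t>s>0$ (the first two after multiplying by the conjugate, the third trivially). For the lower bound, regroup
$$
D(s,t)-\sqrt{s(t-s)} =\bigl(\sqrt{ts}-\sqrt{s(t-s)}\bigr)-\bigl(t-\sqrt{t^2-s^2}\bigr) =s^2\left(\frac1{\sqrt{ts}+\sqrt{s(t-s)}}-\frac1{t+\sqrt{t^2-s^2}}\right)\ge 0,
$$
the last step because $\sqrt{ts}\le t$ and $\sqrt{s(t-s)}\le\sqrt{(t+s)(t-s)}=\sqrt{t^2-s^2}$, so the first denominator does not exceed the second.

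For the upper bound I would use the two one-term relaxations $D(s,t)\le\sqrt{ts}$ and $D(s,t)\le\sqrt{t^2-s^2}=\sqrt{(t-s)(t+s)}$, obtained by dropping the first, respectively the third, defect above, and then split on the ratio $t/s$. If $t\ge\frac98 s$, then $9s(t-s)-ts=s(8t-9s)\ge 0$, whence $\sqrt{ts}\le 3\sqrt{s(t-s)}$; if instead $s<t<\frac98 s$, then $t+s<9s$, whence $\sqrt{(t-s)(t+s)}\le\sqrt{9s(t-s)}=3\sqrt{s(t-s)}$. In either case $D(s,t)\le 3\sqrt{s(t-s)}$, which finishes the proof.

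Conceptually nothing here is hard; the one point requiring care is that the constant $1$ on the left of~\eqref{sec2-eq2.8} is essentially sharp — indeed $D(s,t)/\sqrt{s(t-s)}\to 1$ as $t/s\to\infty$ — so the comparison of the two denominators in the lower-bound step must be made term by term and cannot be weakened to a cruder estimate. The upper bound, by contrast, has ample room to spare, which is why the elementary two-case argument is more than enough.
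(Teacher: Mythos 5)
Your proof is correct. You and the paper start from the same computation, namely $\sigma^2_{t,x}\sigma^2_{s,x}-\mu_{t,s,x}^2=\frac1{\pi}\bigl(\sqrt{ts}-t+\sqrt{t^2-s^2}\bigr)$, but then diverge: the paper homogenizes by setting $z=s/t$ and manipulates $\sqrt{z}+\sqrt{1-z^2}-1$ against $\sqrt{z(1-z)}$ directly, whereas you keep the variables $s,t$ and argue via the conjugate identities $t-\sqrt{t^2-s^2}=\frac{s^2}{t+\sqrt{t^2-s^2}}$ and $\sqrt{ts}-\sqrt{s(t-s)}=\frac{s^2}{\sqrt{ts}+\sqrt{s(t-s)}}$ for the lower bound, and the two relaxations $D\le\sqrt{ts}$, $D\le\sqrt{t^2-s^2}$ with a case split at $t=\tfrac{9}{8}s$ for the upper bound. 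Both routes are elementary, but your lower-bound step is the more robust of the two: the paper's printed chain passes through $\sqrt{z}-\sqrt{z^2}\ge\sqrt{z(1-z)}$, which after dividing by $\sqrt z$ reads $1-\sqrt z\ge\sqrt{1-z}$ and is in fact \emph{false} for $0<z<1$ (squaring gives $z\ge\sqrt z$), so the paper's intermediate inequality does not hold even though the final bound does; your denominator comparison $\sqrt{ts}+\sqrt{s(t-s)}\le t+\sqrt{t^2-s^2}$ avoids this entirely and, as you correctly observe, is forced to be tight because the ratio $D(s,t)/\sqrt{s(t-s)}$ tends to $1$ as $t/s\to\infty$. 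The paper's upper-bound manipulation is sound and comparable in length to your case split; neither approach has an advantage there beyond taste.
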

\begin{proof}
Given $t>s>0$ and $x\in {\mathbb R}$. We have
\begin{align*}
\sigma^2_{t,x}\sigma^2_{s,x}-\mu_{t,s,x}^2& =\frac{1}{2\pi}\left(2\sqrt{ts}-
\left((t+s)^{1/2}-(t-s)^{1/2}\right)^2\right)\\
&=\frac{1}{\pi}\left(\sqrt{ts}-t+\sqrt{t^2-s^2}\right)\\
&=\frac{t}{\pi}\left(\sqrt{z}+\sqrt{1-z^2}-1\right)
\end{align*}
with $z=\frac{s}{t}$. Clearly,
$$
\sqrt{z}+\sqrt{1-z^2}-1=\sqrt{z}-\left(1-\sqrt{1-z^2}\right)\geq \sqrt{z}-\sqrt{z^2}\geq \sqrt{z(1-z)}
$$
for all $0\leq z\leq 1$. Conversely, we have also that
\begin{align*}
0\leq \sqrt{z}+\sqrt{1-z}-1&=\sqrt{z(1-z)}+ \sqrt{z}+\left(\sqrt{1-z}-\sqrt{z(1-z)}\right)-1\\
&=\sqrt{z(1-z)}+ \sqrt{z}+\sqrt{1-z}\left(1-\sqrt{z}\right)-1\\
&\leq \sqrt{z(1-z)}+\left(\sqrt{z}-z\right)\leq 2\sqrt{z(1-z)},
\end{align*}
which gives
\begin{align*}
\sqrt{z}+\sqrt{1-z^2}-1&=\sqrt{z}+\sqrt{1-z+z-z^2}-1\\
&\leq \sqrt{z}+\sqrt{1-z}-1+\sqrt{z(1-z)}\leq 3\sqrt{z(1-z)}.
\end{align*}
This completes the proof.
\end{proof}
\begin{lemma}\label{lem2.7}
For all $t>0$ and $x>y$ denote
$\mu_{t,x,y}=E\left[u(t,x)u(t,y)\right]$. Then, under the conditions of Lemma~\ref{lem2.6} we have
\begin{equation}\label{sec2-eq2.9}
\sigma^2_{t,x}\sigma^2_{t,y}-\mu_{t,x,y}^2\asymp \frac{(x-y)t}{\sqrt{t}+x-y}.
\end{equation}
In particular, we have
\begin{equation}\label{sec2-eq2.10}
0\leq \sigma^2_{t,z}-\mu_{t,x,y}\asymp \frac{(x-y)\sqrt{t}}{\sqrt{t}+x-y}
\end{equation}
for all $t>0$ and $x,y,z\in {\mathbb R}$.
\end{lemma}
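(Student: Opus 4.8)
The plan is to reduce the two-sided estimate \eqref{sec2-eq2.9} to the sharper statement \eqref{sec2-eq2.10} and then to analyze a single one‑variable integral. First I would use that $\sigma^2_{t,x}=\sigma^2_{t,y}=E[u(t,x)^2]=\sqrt{t/\pi}$ does not depend on the spatial point, so that
\[
\sigma^2_{t,x}\sigma^2_{t,y}-\mu_{t,x,y}^2=\bigl(\sqrt{t/\pi}-\mu_{t,x,y}\bigr)\bigl(\sqrt{t/\pi}+\mu_{t,x,y}\bigr).
\]
By Cauchy--Schwarz $|\mu_{t,x,y}|\le\sigma_{t,x}\sigma_{t,y}=\sqrt{t/\pi}$, and from the representation $\mu_{t,x,y}=R_{x,y}(t,t)=\frac1{2\sqrt\pi}\int_0^t r^{-1/2}e^{-(x-y)^2/(4r)}\,dr\ge0$ obtained at the beginning of this section we get $0\le\mu_{t,x,y}\le\sqrt{t/\pi}$. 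Hence the second factor satisfies $\sqrt{t/\pi}+\mu_{t,x,y}\asymp\sqrt t$, and it suffices to establish \eqref{sec2-eq2.10}, i.e. $\sigma^2_{t,z}-\mu_{t,x,y}\asymp\frac{(x-y)\sqrt t}{\sqrt t+x-y}$; then \eqref{sec2-eq2.9} follows upon multiplying by a factor of order $\sqrt t$.

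Next I would write the difference as a single integral. Using $\sqrt{t/\pi}=\frac1{2\sqrt\pi}\int_0^t r^{-1/2}\,dr$ together with the formula for $\mu_{t,x,y}$ above gives
\[
\sigma^2_{t,z}-\mu_{t,x,y}=\frac1{2\sqrt\pi}\int_0^t\frac1{\sqrt r}\Bigl(1-e^{-\frac{(x-y)^2}{4r}}\Bigr)\,dr,
\]
which is manifestly nonnegative. Writing $a=x-y>0$ and substituting $r=a^2/(4u^2)$ I would transform this into
\[
\sigma^2_{t,z}-\mu_{t,x,y}=\frac{a}{2\sqrt\pi}\int_{a/(2\sqrt t)}^{\infty}\frac{1-e^{-u^2}}{u^2}\,du=\frac{a}{2\sqrt\pi}\,h\!\left(\frac{a}{2\sqrt t}\right),\qquad h(c):=\int_c^{\infty}\frac{1-e^{-u^2}}{u^2}\,du.
\]
The whole problem then reduces to the uniform two-sided bound $h(c)\asymp\frac1{1+c}$ on $(0,\infty)$.

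For that I would record that $h$ is continuous, strictly positive and decreasing on $(0,\infty)$; that $h(0^+)=\int_0^\infty\frac{1-e^{-u^2}}{u^2}\,du=\sqrt\pi$ is finite (by integration by parts, or directly from the identity $\frac2{\sqrt\pi}\int_0^\infty s^{-2}(1-e^{-s^2/4})\,ds=1$ already used in the Introduction); and that, splitting $h(c)=\int_c^\infty u^{-2}\,du-\int_c^\infty u^{-2}e^{-u^2}\,du=\frac1c-O\!\bigl(c^{-3}e^{-c^2}\bigr)$, one has $h(c)\sim 1/c$ as $c\to\infty$. Since $1/(1+c)\to1$ as $c\to0$ and $1/(1+c)\sim1/c$ as $c\to\infty$, the ratio $h(c)(1+c)$ has finite positive limits at both ends of $(0,\infty)$ and is continuous and positive in between, hence is bounded above and below by positive constants on the whole half‑line. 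Substituting back, $a\,h\bigl(a/(2\sqrt t)\bigr)\asymp\frac{a}{1+a/(2\sqrt t)}=\frac{2a\sqrt t}{2\sqrt t+a}\asymp\frac{a\sqrt t}{\sqrt t+a}$, which is \eqref{sec2-eq2.10}; multiplying by $\sqrt{t/\pi}+\mu_{t,x,y}\asymp\sqrt t$ yields \eqref{sec2-eq2.9}. The only genuinely delicate point is this uniform comparison $h(c)\asymp1/(1+c)$: the behaviors as $c\to0$ and as $c\to\infty$ must be matched so that a single pair of constants works over the entire range, and that is where I expect to spend the most care.
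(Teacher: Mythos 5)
Your proposal is correct and follows essentially the same route as the paper: both factor $\sigma^2_{t,x}\sigma^2_{t,y}-\mu_{t,x,y}^2$ as a difference of squares, observe that the factor $\sqrt{t/\pi}+\mu_{t,x,y}$ (equivalently $\int_0^t r^{-1/2}(1+e^{-(x-y)^2/4r})\,dr$) is comparable to $\sqrt{t}$, and reduce the remaining factor by a change of variables to a one-parameter function whose comparison with $c\mapsto 1/(1+c)$ is obtained from continuity, positivity, and the finite positive limits at $0$ and $\infty$. The only differences are cosmetic (your normalization of the substitution, and deriving \eqref{sec2-eq2.10} first and then multiplying to get \eqref{sec2-eq2.9}, rather than the reverse).
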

\begin{proof}
Given $t>0$ and $x>y$. We have
\begin{align*}
\sigma^2_{t,x}\sigma^2_{t,y}-\mu_{t,x,y}^2&
=\frac{t}{\pi}-\frac{1}{4\pi}
\left(\int_0^t\frac1{\sqrt{r}}
\exp\left\{-\frac{(x-y)^2}{4r}\right\}dr\right)^2\\
&=\frac{1}{4\pi}\left(4t-\left(\int_0^t\frac1{\sqrt{r}}
\exp\left\{-\frac{(x-y)^2}{4r}\right\}dr\right)^2\right)\\
&=\frac{1}{4\pi}\int_0^t\frac{dr}{\sqrt{r}}
\left(1-\exp\left\{-\frac{(x-y)^2}{4r}\right\}\right)\\
&\qquad\cdot
\int_0^t\frac{dr}{\sqrt{r}}
\left(1+\exp\left\{-\frac{(x-y)^2}{4r}\right\}\right)\\
&\asymp \sqrt{t}\int_0^t\frac{dr}{\sqrt{r}}
\left(1-\exp\left\{-\frac{(x-y)^2}{4r}\right\}\right)\\
&=2\sqrt{t}(x-y)\int_{\frac{x-y}{\sqrt{t}}}^\infty
\left(1-e^{-\frac{s^2}{4}}\right)\frac{ds}{s^2}.
\end{align*}
As in Lemma~\ref{lem2.3}, we define the function $f:{\mathbb R}_{+}\to {\mathbb R}_{+}$ by
$$
f(z)=z\int_z^{+\infty}\frac1{s^2} \left(1-e^{-\frac{s^2}{4}}\right)ds.
$$
Then $f$ is continuous in ${\mathbb R}_{+}$ and
$$
\lim_{z\to 0}\frac{f(z)}{z}=\int_0^{+\infty}\frac1{s^2} \left(1-e^{-\frac{s^2}{4}}\right)ds=\frac12\int_0^{+\infty} e^{-\frac{s^2}{4}}ds=\frac{\sqrt{\pi}}2
$$
and
$$
\lim_{z\to \infty}\frac{f(z)}{\frac{z}{1+z}}=1,
$$
which show that the functions $z\mapsto \frac{f(z)}{\frac{z}{1+z}}$ and $z\mapsto \frac{\frac{z}{1+z}}{f(z)}$
are bounded in $[0,\infty]$, i.e., there is
$$
\frac{x-y}{\sqrt{t}}\int_{\frac{x-y}{\sqrt{t}}}^\infty
\left(1-e^{-\frac{s^2}{4}}\right)\frac{ds}{s^2}\asymp \frac{\frac{x-y}{\sqrt{t}}}{1+\frac{x-y}{\sqrt{t}}} =\frac{x-y}{\sqrt{t}+x-y}.
$$
This shows that
\begin{align*}
\sigma^2_{t,x}\sigma^2_{t,y}-\mu_{t,x,y}^2
&\asymp \frac{(x-y)t}{\sqrt{t}+x-y},
\end{align*}
and the lemma follows.
\end{proof}

The second object in this section is to discuss Skorohod integrals associated with the solution process $u=\{u(t,x),t\geq 0,x\in {\mathbb R}\}$. From the above discussion we have known that the processes $B=\{B_t=u(t,\cdot),t\geq 0\}$ and $W=\{W_x=u(\cdot,x),x\in {\mathbb R}\}$ are neither semimartingales nor a Markov processes, so many of the powerful techniques from stochastic analysis are not available when dealing with the three processes. However, as a Gaussian process, one can develop the stochastic calculus of variations with respect to them. We refer to Al\'os {\em et al}~\cite{Nua1}, Nualart~\cite{Nua4} and the references therein for more details of stochastic calculus of Gaussian process.

Let ${\mathcal E}_t$ and ${\mathcal E}_x$ be respectively, the sets of linear combinations of elementary functions $\{1_{I_x},x\in \mathbb{R}\}$ and $\{1_{[0,t]},0\leq t\leq T\}$. Assume that $\mathcal{H}_t$ and ${\mathcal H}_{\ast}$ are the Hilbert spaces defined as the closure of ${\mathcal E}_t$ and ${\mathcal E}_x$ respect to the inner products
$$
\langle 1_{I_x},1_{I_y} \rangle_{{\mathcal H}_t}=\frac1{2\sqrt{\pi}}\int_0^t\frac1{\sqrt{s}}
\exp\left\{-\frac{(x-y)^2}{4s}\right\}ds=
\frac{|x-y|}{\sqrt{\pi}}\int_{\frac{|x-y|}{\sqrt{t}}}^\infty \frac1{s^2}e^{-\frac{s^2}{4}}ds
$$
and
$$
\langle 1_{[0,t]},1_{[0,s]} \rangle_{{\mathcal H}_\ast}=\frac1{\sqrt{2\pi}}
\left((t+s)^{1/2}-|t-s|^{1/2}\right),
$$
respectively. These maps $1_{I_x}\mapsto W_x$ and $1_{[0,t]}\mapsto B_t$ are two isometries between ${\mathcal E}_t$, ${\mathcal E}_x$ and the Gaussian spaces $W(\varphi)$, $B(\varphi)$ of $\{W_x,x\in {\mathbb R}\}$ and $\{B_t,t\geq 0\}$, respectively, which can be extended to $\mathcal{H}_t$ and ${\mathcal H}_{\ast}$, respectively. We denote these extensions by
$$
\varphi\mapsto W(\varphi)=\int_{\mathbb R}\varphi(y)dW_y
$$
and
$$
\psi\mapsto B(\psi)=\int_0^T\psi(s)dB_s,
$$
respectively.

Denote by ${\mathcal S}_t$ and ${\mathcal S}_{\ast}$ the sets of smooth functionals of the form
$$
F_t=f(W(\varphi_1),W(\varphi_2),\ldots, W(\varphi_n))
$$
and
$$
F_\ast=f(B(\psi_1),B(\psi_2),\ldots, B(\psi_n)),
$$
where $f\in C^{\infty}_b({\mathbb R}^n)$ ($f$ and all their
derivatives are bounded), $\varphi_i\in {\mathcal H}_t$ and $\psi_i\in {\mathcal H}_{\ast}$. The {\it derivative operators} $D^t$ and $D^{\ast}$ (the Malliavin derivatives) of functionals $F_t$ and $F_\ast$ of the above forms are defined as
$$
D^tF_t=\sum_{j=1}^n\frac{\partial f}{\partial
x_j}(W(\varphi_1),W(\varphi_2),
\ldots,W(\varphi_n))\varphi_j
$$
and
$$
D^{\ast}F_\ast=\sum_{j=1}^n\frac{\partial f}{\partial
x_j}(B(\psi_1),B(\psi_2),\ldots,B(\psi_n))\psi_j,
$$
respectively. These derivative operators $D^t,D^{\ast}$ are then closable from $L^2(\Omega)$ into $L^2(\Omega;{\mathcal H}_t)$ and $L^2(\Omega;{\mathcal H}_\ast)$, respectively. We denote by ${\mathbb D}^{t,1,2}$ and ${\mathbb D}^{\ast,1,2}$ the closures of ${\mathcal S}_t$, ${\mathcal S}_{\ast}$ and ${\mathcal S}$ with respect to the norm
$$
\|F_t\|_{t,1,2}:=\sqrt{E|F|^2+E\|D^{t}F_t\|^2_{{\mathcal H}_t}}
$$
and
$$
\|F_{\ast}\|_{\ast,1,2}:=\sqrt{E|F|^2+E\|D^{\ast}F_{\ast} \|^2_{{\mathcal H}_\ast}},
$$
respectively. The {\it divergence integrals} $\delta^{t}$ and $\delta^{\ast}$ are the adjoint of derivative operators $D^t$ and $D^{\ast}$, respectively. That are, we say that random variables $v\in L^2(\Omega;{\mathcal H}_t)$ $w\in L^2(\Omega;{\mathcal H}_\ast)$ belong to the domains of the
divergence operators $\delta^{t}$ and $\delta^{\ast}$, respectively, denoted by ${\rm {Dom}}(\delta^t)$ and ${\rm {Dom}}(\delta^\ast)$ if
$$
E\left|\langle D^{t}F_t,v\rangle_{{\mathcal H}_t}\right|\leq
c\|F_t\|_{L^2(\Omega)}, E\left|\langle D^{\ast}F_\ast,w\rangle_{{\mathcal H}_\ast}\right|\leq
c\|F_\ast\|_{L^2(\Omega)},
$$
respectively, for all $F_t\in {\mathcal S}_t$ and $F_\ast\in {\mathcal S}_t$. In these cases $\delta^{t}(v)$ and $\delta^{\ast}(w)$ are defined by the duality relationships
\begin{align}\label{sec2-2-eq2.1}
E\left[F_t\delta^t(v)\right]&=E\langle D^{t}F_t,v\rangle_{{\mathcal H}_t},\\  \label{sec2-2-eq2.2}
E\left[F_\ast\delta^\ast(w)\right]&=E\langle D^{\ast}F_\ast,w\rangle_{{\mathcal H}_\ast},
\end{align}
respectively, for any $v\in {\mathbb D}^{t,1,2}$ and $w\in {\mathbb D}^{\ast,1,2}$. We have ${\mathbb D}^{t,1,2}\subset {\rm {Dom}}(\delta^t)$ and ${\mathbb D}^{\ast,1,2}\subset {\rm {Dom}}(\delta^\ast)$. We will use the notations
$$
\delta^t(v)=\int_{\mathbb R}v_y\delta W_y,\qquad \delta^\ast(w)=\int_0^Tw_s\delta B_s
$$
to express the Skorohod integrals, and the indefinite Skorohod integrals is defined as
$$
\int_{I_x}v_y\delta W_y=\delta^t(v1_{I_x}),\quad \int_0^tw_s\delta B_s=\delta^\ast(w1_{[0,t]}),
$$
respectively. Denote
$$
\tilde{D}\in \{D^t,D^\ast\},\quad \tilde{\delta}=\{\delta^t,\delta^\ast\},\quad \tilde{{\mathbb D}}^{1,2}\in \{{\mathbb D}^{t,1,2},{\mathbb D}^{\ast,1,2}\}.
$$
We can localize the domains of the operators $\tilde{D}$ and $\tilde{\delta}$. If $\mathbb{L}$ is a class of random variables (or processes) we denote by $\mathbb{L}_{\rm loc}$ the set of random variables $F$ such that there exists a sequence $\{(\Omega_n, F^n), n\geq 1\}\subset {\mathscr F}\times \mathbb{L}$ with the following properties:

\hfill

(i) $\Omega_n\uparrow \Omega$, a.s.

(ii) $F=F^n$ a.s. on $\Omega_n$.

\hfill\\
If $F\in \tilde{\mathbb{D}}^{1,2}_{\rm loc}$, and $(\Omega_n, F^n)$ localizes $F$ in $\mathbb{D}^{1,2}$, then $\tilde{D}F$ is defined without ambiguity by $\tilde{D}F=\tilde{D}F^n$ on $\Omega_n$, $n\geq 1$. Then, if $v\in \tilde{\mathbb{D}}^{1,2}_{\rm loc}$, the divergence $\tilde{\delta}(v)$ is defined as a random variable determined by the conditions
$$
\tilde{\delta}(v)|_{\Omega_n}=\tilde{\delta}(v^n)|_{\Omega_n}\qquad {\rm { for\;\; all\;\;}} n\geq 1,
$$
where $(\Omega_n, v^n)$ is a localizing sequence for $v$, but it may depend on the localizing sequence.

\section{The quadratic covariation of process $\{u(\cdot,x),x\in {\mathbb R}\}$}\label{sec4}
In this section, we study the existence of the
PQC $[f(u(t,\cdot)),u(t,\cdot)]^{(SQ)}$. Recall that
$$
I_\varepsilon^2(f,x,t)=\frac1{\varepsilon} \int_{I_x}\left\{f(u(s,y+\varepsilon))-f(u(s,y))\right\}
(u(s,y+\varepsilon)-u(s,y))dy
$$
for $\varepsilon>0$ and $x\in {\mathbb R}$, and
\begin{equation}\label{sec4-eq4.1}
[f(u(t,\cdot)),u(t,\cdot)]^{(SQ)}_x=\lim_{\varepsilon\downarrow
0}I_\varepsilon^2(f,x,t),
\end{equation}
provided the limit exists in probability. In this section we fix a time parameter $t>0$ and recall that
$$
W=\{W_x=u(\cdot,x),x\in {\mathbb R}\}.
$$

Recall that the local H\"{o}der index $\gamma_0$
of a continuous paths process $\{X_t: t\geq 0\}$ is the supremum of the exponents $\gamma$ verifying, for any $T>0$:
$$
P(\{\omega: \exists L(\omega)>0, \forall s,t \in[0,T],
|X_t(\omega)-X_s(\omega)|\leq L(\omega)|t-s|^\gamma\})=1.
$$
Recently, Gradinaru-Nourdin~\cite{Grad3} introduced the following
very useful result:
\begin{lemma}\label{Grad-Nourdin}
Let $g:{\mathbb R}\to {\mathbb R}$ be a function satisfying
\begin{equation}\label{eq4.2-Gradinaru--Nourdin}
|g(x)-g(y)|\leq C|x-y|^a(1+x^2+y^2)^b,\quad (C>0,0<a\leq 1,b>0),
\end{equation}
for all $x,y\in {\mathbb R}$ and let $X$ be a locally H\"older
continuous paths process with index $\gamma\in (0,1)$. Assume that
$V$ is a bounded variation continuous paths process. Set
$$
X^{g}_\varepsilon(t)=\int_0^tg\left(\frac{X_{s+\varepsilon}-X_s
}{\varepsilon^\gamma}\right)ds
$$
for $t\geq 0$, $\varepsilon>0$. If for each $t\geq 0$, as
$\varepsilon\to 0$,
\begin{equation}\label{condition}
\|X^{g}_\varepsilon(t)-V_t\|_{L^2}^2=O(\varepsilon^\alpha)
\end{equation}
with $\alpha>0$, then, $\lim_{\varepsilon\to
0}X^{g}_\varepsilon(t)=V_t$ almost surely, for any $t\geq 0$, and if $g$ is non-negative, for any continuous stochastic process $\{Y_t:\;t\geq
0\}$,
\begin{equation}
\lim_{\varepsilon\to 0}
\int_0^tY_sg\left(\frac{X_{s+\varepsilon}-X_s}{\varepsilon^\gamma} \right)ds
\longrightarrow \int_0^tY_sdV_s,
\end{equation}
almost surely, uniformly in $t$ on each compact interval.
\end{lemma}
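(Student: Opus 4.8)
The plan is to compare $X^g_\varepsilon(t)$ with its values along a fixed deterministic sequence $\varepsilon_n\downarrow0$, to obtain convergence along that sequence from the $L^2$-rate \eqref{condition} via Borel--Cantelli, and then to close the gap between consecutive $\varepsilon_n$'s using the local H\"older continuity of $X$ together with the growth bound \eqref{eq4.2-Gradinaru--Nourdin} on $g$. First, fix $\beta>1/\alpha$ and put $\varepsilon_n=n^{-\beta}$. By Chebyshev's inequality and \eqref{condition}, for every $\eta>0$,
$$
\sum_{n\geq1}P\bigl(|X^g_{\varepsilon_n}(t)-V_t|>\eta\bigr)\leq\frac1{\eta^2}\sum_{n\geq1}\|X^g_{\varepsilon_n}(t)-V_t\|_{L^2}^2\leq\frac{C}{\eta^2}\sum_{n\geq1}n^{-\beta\alpha}<\infty ,
$$
so Borel--Cantelli gives $X^g_{\varepsilon_n}(t)\to V_t$ a.s.; running this over a countable dense set of times we may fix one null set off which the convergence holds simultaneously for all such $t$.

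Next comes the core estimate. Fix $\gamma'<\gamma$ (to be chosen) and work on the event $\Omega_L=\{|X_u-X_v|\leq L|u-v|^{\gamma'}\text{ for all }u,v\in[0,t+1]\}$, recalling that $P(\Omega_L)\uparrow1$ as $L\uparrow\infty$ by the local H\"older property. For $\varepsilon\in[\varepsilon_{n+1},\varepsilon_n]$ I would split
$$
\frac{X_{s+\varepsilon}-X_s}{\varepsilon^\gamma}-\frac{X_{s+\varepsilon_n}-X_s}{\varepsilon_n^\gamma}=\frac{X_{s+\varepsilon}-X_{s+\varepsilon_n}}{\varepsilon^\gamma}+\bigl(X_{s+\varepsilon_n}-X_s\bigr)\Bigl(\frac1{\varepsilon^\gamma}-\frac1{\varepsilon_n^\gamma}\Bigr),
$$
and, using $\varepsilon_n-\varepsilon_{n+1}\asymp\varepsilon_n/n$ together with the H\"older bound, estimate this difference on $\Omega_L$ by $C_L\,n^{\beta(\gamma-\gamma')-\gamma'}$, while each of the two normalized increments appearing above is at most $C_L\,n^{\beta(\gamma-\gamma')}$. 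Plugging these into \eqref{eq4.2-Gradinaru--Nourdin} and integrating in $s$ over $[0,t]$ yields
$$
\sup_{\varepsilon\in[\varepsilon_{n+1},\varepsilon_n]}\bigl|X^g_\varepsilon(t)-X^g_{\varepsilon_n}(t)\bigr|\leq C_{L,t}\,n^{\beta(\gamma-\gamma')(a+2b)-a\gamma'}\qquad\text{on }\Omega_L .
$$
Since the threshold $\tfrac{\beta(a+2b)}{\beta(a+2b)+a}\,\gamma$ is strictly below $\gamma$, one may choose $\gamma'\in\bigl(\tfrac{\beta(a+2b)}{\beta(a+2b)+a}\gamma,\,\gamma\bigr)$, which makes the exponent negative; combined with the subsequential convergence and the fact that for small $\varepsilon$ one has $\varepsilon\in[\varepsilon_{n+1},\varepsilon_n]$ with $n\to\infty$, and with $P(\Omega_L)\uparrow1$, this gives $X^g_\varepsilon(t)\to V_t$ a.s. for each fixed $t$. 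I expect this exponent bookkeeping to be the main obstacle: the factor $(1+x^2+y^2)^b$ in \eqref{eq4.2-Gradinaru--Nourdin} forces the use of the pathwise bound $|X_{s+\varepsilon}-X_s|\leq L\varepsilon^{\gamma'}$ with $\gamma'$ strictly less than the H\"older index $\gamma$, so the normalized increments are only known to be $O(n^{\beta(\gamma-\gamma')})$ rather than bounded, and one must verify there is enough room between $\gamma'$ and $\gamma$ to absorb the resulting polynomial losses --- which is exactly where the hypotheses $\alpha>0$ (an admissible $\beta$ exists) and $b<\infty$ enter.

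For the second assertion assume $g\geq0$. Then $t\mapsto X^g_\varepsilon(t)$ is nondecreasing, hence so is the limit $V$, which is also continuous by hypothesis; therefore the a.s. pointwise convergence on a dense set of times promotes, by monotonicity and continuity of the limit (a P\'olya-type argument), to convergence uniform in $t$ on compacts, and the interpolation bound above, being uniform in $s$, extends it to the whole family, i.e.\ $X^g_\varepsilon(\cdot)\to V_\cdot$ uniformly on compacts, a.s. Finally, approximating a continuous $Y$ uniformly on $[0,t]$ by step processes $Y^{(m)}=\sum_iY_{t_i}1_{[t_i,t_{i+1})}$ and writing
$$
\int_0^tY^{(m)}_sg\Bigl(\frac{X_{s+\varepsilon}-X_s}{\varepsilon^\gamma}\Bigr)ds=\sum_iY_{t_i}\bigl(X^g_\varepsilon(t_{i+1}\wedge t)-X^g_\varepsilon(t_i\wedge t)\bigr)\longrightarrow\sum_iY_{t_i}\bigl(V_{t_{i+1}\wedge t}-V_{t_i\wedge t}\bigr)=\int_0^tY^{(m)}_s\,dV_s ,
$$
one controls the errors from replacing $Y$ by $Y^{(m)}$ against the uniformly bounded masses $X^g_\varepsilon(t)$ and $V_t$; a standard three-$\varepsilon$ argument, uniform in $t$ on compacts since $X^g_\varepsilon(\cdot)\to V_\cdot$ uniformly and $Y,V$ are uniformly continuous there, then gives $\int_0^tY_sg\bigl(\tfrac{X_{s+\varepsilon}-X_s}{\varepsilon^\gamma}\bigr)ds\to\int_0^tY_s\,dV_s$ a.s., uniformly in $t$ on compacts, as claimed.
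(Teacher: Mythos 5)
The paper does not prove this lemma at all: it is quoted verbatim from Gradinaru--Nourdin \cite{Grad3} as a known result, so there is no in-paper argument to compare against. Your proof is correct and is essentially the standard argument from that reference --- Borel--Cantelli along $\varepsilon_n=n^{-\beta}$ with $\beta\alpha>1$, interpolation between consecutive $\varepsilon_n$ via the local H\"older bound with exponent $\gamma'<\gamma$ fed into the growth condition on $g$ (your exponent bookkeeping $\beta(\gamma-\gamma')(a+2b)-a\gamma'<0$ checks out), and then monotonicity of $X^g_\varepsilon(\cdot)$ plus continuity of $V$ to upgrade to uniform convergence and pass to the Stieltjes integrals.
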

According to the lemma above we get the next proposition.
\begin{proposition}\label{prop4.1}
Let $f\in C^1({\mathbb R})$. We have
\begin{equation}\label{sec4-eq3.5}
[f(W),W]^{(SQ)}_x=\int_{I_x}f'(W_y)dy
\end{equation}
and in particular, we have
$$
[W,W]^{(SQ)}_x=|x|
$$
for all $x\in {\mathbb R}$.
\end{proposition}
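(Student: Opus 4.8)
The plan is to deduce everything from Lemma~\ref{Grad-Nourdin} (Gradinaru--Nourdin) applied to the process $W$, with Hölder exponent $\gamma=\frac12$ and the nonnegative function $g(z)=z^2$; note that $g$ satisfies \eqref{eq4.2-Gradinaru--Nourdin} with $a=1,\ b=\frac12$, because $|z_1^2-z_2^2|\le(|z_1|+|z_2|)\,|z_1-z_2|\le 2(1+z_1^2+z_2^2)^{1/2}|z_1-z_2|$. First I would record that $W$ has continuous paths and local Hölder index $\tfrac12$: by Lemma~\ref{lem2.1} with $s=t$ one has $E[(W_y-W_z)^2]\le C|y-z|$, so Gaussianity gives $E|W_y-W_z|^{2p}\le C_p|y-z|^{p}$ for all $p\ge1$, and Kolmogorov's criterion yields Hölder continuity of every order $<\frac12$; the exact scaling $E[(W_y-W_z)^2]=\frac1{\sqrt{\pi}}\int_0^t\frac1{\sqrt r}\big(1-e^{-(y-z)^2/(4r)}\big)dr\asymp|y-z|$ for small $|y-z|$ shows the index is no larger.

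The crux is the case $f\equiv1$. Writing $I_\delta^1(\mathrm{id},x,t)=\frac1\delta\int_{I_x}(W_{y+\delta}-W_y)^2dy=\int_{I_x}g\big(\tfrac{W_{y+\delta}-W_y}{\delta^{1/2}}\big)dy$, I would verify condition~\eqref{condition} with $V_x=|x|$ and $\alpha=1$. Substituting $s=\delta/\sqrt r$ in the formula above gives $\frac1\delta E[(W_{y+\delta}-W_y)^2]=\frac2{\sqrt\pi}\int_{\delta/\sqrt t}^\infty\frac{1-e^{-s^2/4}}{s^2}ds$, independent of $y$; hence $E\big[I_\delta^1(\mathrm{id},x,t)\big]=|x|\cdot\frac2{\sqrt\pi}\int_{\delta/\sqrt t}^\infty\frac{1-e^{-s^2/4}}{s^2}ds$, and since $\frac2{\sqrt\pi}\int_0^\infty\frac{1-e^{-s^2/4}}{s^2}ds=1$ (cf.\ Section~\ref{sec1}) one gets $\big|E[I_\delta^1(\mathrm{id},x,t)]-|x|\big|=O(\delta)$. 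For the variance, jointly Gaussian increments give $\mathrm{Cov}\big((W_{y+\delta}-W_y)^2,(W_{z+\delta}-W_z)^2\big)=2\big(E[(W_{y+\delta}-W_y)(W_{z+\delta}-W_z)]\big)^2$, so $\mathrm{Var}\,I_\delta^1(\mathrm{id},x,t)=\frac2{\delta^2}\int_{I_x}\int_{I_x}\big(E[(W_{y+\delta}-W_y)(W_{z+\delta}-W_z)]\big)^2dydz$; on the band $|y-z|\le\delta$ I bound the inner covariance by Cauchy--Schwarz and $E[(W_{y+\delta}-W_y)^2]\le C\delta$ (Lemma~\ref{lem2.1}), contributing $O(\delta)$ to the variance, while on $|y-z|>\delta$ I apply Lemma~\ref{lem2.5} to the four ordered points $y<y+\delta\le z<z+\delta$ to get $|E[(W_{y+\delta}-W_y)(W_{z+\delta}-W_z)]|\le\frac{\delta^2}{4\sqrt{t\pi}}e^{-(|y-z|-\delta)^2/(4t)}$, which contributes $O(\delta^2)$ after integrating the Gaussian tail. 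Thus $\|I_\delta^1(\mathrm{id},x,t)-|x|\|_{L^2}^2=O(\delta)$, and Lemma~\ref{Grad-Nourdin} gives $[W,W]^{(SQ)}_x=|x|$ almost surely; since $g\ge0$, the same lemma also gives $\frac1\delta\int_{I_x}Y_y(W_{y+\delta}-W_y)^2dy\to\int_{I_x}Y_y\,dy$ for every continuous process $Y$.

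For general $f\in C^1$ I would split by the mean value theorem, $f(W_{y+\delta})-f(W_y)=f'(\xi_y)(W_{y+\delta}-W_y)$ with $\xi_y$ between $W_y$ and $W_{y+\delta}$, so that
$$I_\delta^1(f,x,t)=\frac1\delta\int_{I_x}f'(W_y)(W_{y+\delta}-W_y)^2dy+\frac1\delta\int_{I_x}\big(f'(\xi_y)-f'(W_y)\big)(W_{y+\delta}-W_y)^2dy.$$
By the weighted convergence above with $Y_y=f'(W_y)$ (continuous, since $W$ is continuous and $f'\in C(\mathbb{R})$) the first term tends to $\int_{I_x}f'(W_y)dy$. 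For the second, fix a bounded interval containing $I_x$ together with its $\delta$-neighbourhood; almost surely $W$ is uniformly continuous there, hence $\sup_y|\xi_y-W_y|\le\sup_y|W_{y+\delta}-W_y|\to0$ and $\rho_\delta:=\sup_y|f'(\xi_y)-f'(W_y)|\to0$ by uniform continuity of $f'$ on the bounded range of $W$; the second term is then at most $\rho_\delta\cdot\frac1\delta\int_{I_x}(W_{y+\delta}-W_y)^2dy$, which tends to $0$ since the last factor converges to $|x|$. This yields $[f(W),W]^{(SQ)}_x=\int_{I_x}f'(W_y)dy$ for $x\ge0$; the case $x<0$ follows verbatim after replacing $W$ by the translated process $s\mapsto W_{x+s}$, $s\in[0,|x|]$, for which all the estimates of Section~\ref{sec2} are unchanged since they depend only on spatial differences. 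I expect the variance bound — combining the Gaussian identity $\mathrm{Cov}(\eta^2,\zeta^2)=2(E[\eta\zeta])^2$ with Lemma~\ref{lem2.5} and treating the diagonal band $|y-z|\le\delta$ separately — to be the main technical obstacle.
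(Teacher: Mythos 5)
Your proof is correct and follows essentially the same route as the paper: reduce to $f\equiv 1$ via a first-order Taylor/mean-value expansion, verify the $L^2$-rate condition of Lemma~\ref{Grad-Nourdin} for $g(z)=z^2$ and $\gamma=\tfrac12$, and then invoke the weighted convergence with $Y_y=f'(W_y)$. Your diagonal/off-diagonal variance estimate via Lemma~\ref{lem2.5} simply makes explicit a step the paper only gestures at (it cites Lemma~\ref{lem2.4}, evidently a slip for Lemma~\ref{lem2.5}).
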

\begin{proof}
In fact, the H\"older continuity of the process $W=\{W_x=u(\cdot,x),x\in {\mathbb R}\}$ yields
\begin{align*}
\lim_{\varepsilon\downarrow 0}\frac{1}{\varepsilon}
\int_{I_x}o((W_{y+\varepsilon}-W_y)) (W_{y+\varepsilon}-W_y)^2dy=0
\end{align*}
for all $x\in {\mathbb R}$, almost surely. It follows that
\begin{align*}
\lim_{\varepsilon\downarrow 0}&\frac1{\varepsilon}
\int_{I_x}\left\{f(W_{y+\varepsilon})-f(W_y)\right\}
(W_{y+\varepsilon}-W_y)dy\\
&=\lim_{\varepsilon\downarrow 0}\frac{1}{\varepsilon}
\int_{I_x}
f'(W_y)(W_{y+\varepsilon}-W_y)^2dy
\end{align*}
almost surely. Thus, to end the proof we need to prove the next convergence
$$
[W,W]^{(SQ)}_x =|x|
$$
almost surely. That is, for each $t\geq 0$
\begin{equation}
\left\|W^\varepsilon(x)-|x|
\right\|_{L^2}^2 =O(\varepsilon^\alpha)
\end{equation}
with some $\alpha>0$, as $\varepsilon\to 0$, by the above lemma, where
$$
W^\varepsilon(x)=\frac1{\varepsilon} \int_{I_x}(W_{y+\varepsilon}-W_y)^2dy.
$$
We have
$$
E\left|W^\varepsilon(x)-|x| \right|^2=\frac{1}{\varepsilon^2}\int_{I_x}\int_{I_x}
B_\varepsilon(y,z)dydz
$$
for $x\in {\mathbb R}$ and $\varepsilon>0$, where
\begin{align*}
B_\varepsilon(y,z):&=E\left(
(W_{y+\varepsilon}-W_y)^2
-\varepsilon\right)\left(
(W_{z+\varepsilon}-W_z)^2-\varepsilon \right)\\
&=E(W_{y+\varepsilon}-W_y)^2(W_{z+\varepsilon}-W_z)^2
+\varepsilon^2\\
&\qquad-\varepsilon
E\left((W_{y+\varepsilon}-W_y)^2+(W_{z+\varepsilon}-W_z)^2\right).
\end{align*}
Recall that
\begin{align*}
E[(W_{y+\varepsilon}&-W_y)^2]= \frac1{\sqrt{\pi}}\int_0^t\frac{1}{\sqrt{t-r}}\left(
1-\exp\left\{-\frac{\varepsilon^2}{4(t-r)}\right\}\right)dr\\
&=\frac1{\sqrt{\pi}}\int_0^t\frac{1}{\sqrt{r}}\left(
1-e^{-\frac{\varepsilon^2}{4r}}\right)dr\\
&=\sqrt{\frac2{\pi}}\varepsilon \int_{\frac{\varepsilon}{\sqrt{2t}}}^\infty\frac{1}{s^2}\left(
1-e^{-\frac{s^2}{2}}\right)ds \equiv \phi_{t,y}(\varepsilon)+\varepsilon,
\end{align*}
where
\begin{align*}
\phi_{t,y}(\varepsilon)&=\sqrt{\frac2{\pi}}\varepsilon \int_{\frac{\varepsilon}{\sqrt{2t}}}^\infty\frac{1}{s^2}\left(
1-e^{-\frac{s^2}{2}}\right)ds-\varepsilon.
\end{align*}
Noting that
\begin{align*}
E[(W_{y+\varepsilon}&-W_y)^2(W_{z+\varepsilon}-W_z)^2]\\
&=
E\left[(W_{y+\varepsilon}-W_y)^2\right]
E\left[(W_{z+\varepsilon}-W_z)^2\right]\\
&\hspace{1cm}+2\left(E\left[(W_{y+\varepsilon}-W_y)
(W_{z+\varepsilon}-W_z)\right]\right)^{2}
\end{align*}
for all $\varepsilon>0$ and $y,z\in I_x$, we get
\begin{align*}
B_\varepsilon(y,z)&=\phi_{t,y}(\varepsilon)\phi_{t,y}(\varepsilon) +2(\mu_{y,z})^2
\end{align*}
where $\mu_{y,z}:=E\left[(W_{y+\varepsilon}-W_y)
(W_{z+\varepsilon}-W_z)\right]$.

Now, let us estimate the above function $\varepsilon\mapsto \phi_{t,y}(\varepsilon)$. We have
\begin{align*}
\phi_{t,y}(\varepsilon)
&=\sqrt{\frac2{\pi}}\varepsilon\left(\int_{\frac{\varepsilon}{ \sqrt{2t}}}^\infty\frac{1}{s^2}\left(
1-e^{-\frac{s^2}{2}}\right)ds-\sqrt{\frac{\pi}2}\right)\\
&=\sqrt{\frac2{\pi}}\varepsilon\left(\int_{\frac{\varepsilon}{ \sqrt{2t}}}^\infty\frac{1}{s^2}\left(
1-e^{-\frac{s^2}{2}}\right)ds-\int_0^\infty \frac{1}{s^2}\left(
1-e^{-\frac{s^2}{2}}\right)ds\right)\\
&=-\sqrt{\frac2{\pi}}\varepsilon
\int_0^{\frac{\varepsilon}{\sqrt{2t}}}\frac{1}{s^2}\left(
1-e^{-\frac{s^2}{2}}\right)ds\sim \frac1{2\sqrt{t\pi}}\varepsilon^2\quad (\varepsilon\to 0)
\end{align*}
by the fact
$$
\frac1{\sqrt{2\pi}} \int_0^\infty \frac1{s^2}\left(1-e^{-\frac{s^2}{2}}\right)ds=\frac1{\sqrt{2\pi}} \int_0^\infty e^{-\frac{s^2}{2}}ds=\frac12,
$$
which gives
\begin{align*}
\frac{1}{\varepsilon^2}&\int_{I_x}\int_{I_x} |\phi_{t,y}(\varepsilon)\phi_{t,z}(\varepsilon)|dydz\sim
\frac1{4t\pi}\varepsilon^2x^2\quad (\varepsilon\to 0).
\end{align*}
It follows from Lemma~\ref{lem2.4} that there is a
constant $\alpha>0$ such that
\begin{align*}
\lim_{\varepsilon\downarrow 0}\frac{1}{
\varepsilon^{1+\alpha}}\int_{I_x}\int_{I_x}B_\varepsilon(y,z)dydz=0
\end{align*}
for all $t>0$ and $x\in {\mathbb R}$, which gives the desired estimate
$$
\left\|W^\varepsilon(x)-x\right\|_{L^2}^2=O\left(
\varepsilon^\alpha\right)\qquad (\varepsilon\to 0)
$$
for all $x\in {\mathbb R}$ and some $\alpha>0$.

Notice that $g(y)=y^2$ satisfies the
condition~\eqref{eq4.2-Gradinaru--Nourdin}. We obtain the proposition by taking $Y_y=f'(W_y)$ for $y\in {\mathbb R}$.
\end{proof}

Now, we discuss the existence of the PQC $[f(W),W]^{(SQ)}$. Consider the decomposition
\begin{equation}\label{sec4-eq4.000000}
\begin{split}
I_\varepsilon^1(f,x,t)&=\frac1{\varepsilon}\int_{I_x} f(W_{y+\varepsilon})(W_{y+\varepsilon}-W_y)dy\\
&\hspace{2cm}-\frac1{\varepsilon}\int_{I_x} f(W_y)(W_{y+\varepsilon}-W_y)dy\\
&\equiv I_\varepsilon^{1,+}(f,x,t)-I_\varepsilon^{1,-}(f,x,t)
\end{split}
\end{equation}
for $\varepsilon>0$, and define the set
$$
{\mathscr H}_t=\{f\,:\,{\text { Borel functions on ${\mathbb R}$ such that $\|f\|_{{\mathscr H}_t}<\infty$}}\},
$$
where
\begin{align*}
\|f\|_{{\mathscr H}_t}^2:&=\frac{|x|}{\sqrt[4]{4\pi t}}\int_{\mathbb R}|f(z)|^2\left(\sqrt{t}+z^2\right) e^{-\frac{\sqrt{\pi}z^2}{2\sqrt{t}}}dz.
\end{align*}
Then ${\mathscr H}_t=L^2({\mathbb R},\mu(dz))$ with
$$
\mu(dz)=\left(\frac{|x|}{\sqrt[4]{4\pi t}}\left(\sqrt{t}+z^2\right) e^{-\frac{\sqrt{\pi}z^2}{2\sqrt{t}}}\right)dz
$$
and $\mu({\mathbb R})=C|x|<\infty$, which implies that the set ${\mathscr E}$ of elementary functions of the form
$$
f_\triangle(z)=\sum_{i}f_{i}1_{(x_{i-1},x_{i}]}(z)
$$
is dense in ${\mathscr H}_t$, where $f_i\in {\mathbb R}$ and $\{x_i,0\leq i\leq l\}$ is a finite sequence of real numbers such that $x_i<x_{i+1}$. Moreover, ${\mathscr H}_t$ includes all Borel functions $f$ satisfying the condition
\begin{equation}
|f(z)|\leq Ce^{\beta {z^2}},\quad z\in {\mathbb R}
\end{equation}
with $0\leq \beta<\frac{\sqrt{\pi}}{4\sqrt{t}}$.
\begin{theorem}\label{th3.1}
Let $f\in {\mathscr H}_t$. Then, the PQC $[f(W), W]^{(SQ)}$ exists in $L^2(\Omega)$ and
\begin{align}
E\left|[f(W), W]^{(SQ)}_x\right|^2\leq C \|f\|_{{\mathscr H}_t}^2
\end{align}
for all $x\in {\mathbb R}$.
\end{theorem}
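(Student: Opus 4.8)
The plan is to deduce the statement from a single uniform-in-$\varepsilon$ bound,
$$
E\left|I_\varepsilon^1(f,x,t)\right|^2\le C\|f\|_{\mathscr H_t}^2\qquad(\varepsilon>0,\ f\in\mathscr H_t),
$$
together with the behaviour on $C^1$ functions already known, by a density argument. Since $\mu$ is a finite, absolutely continuous measure, $C_c^\infty(\mathbb R)$ is dense in $\mathscr H_t=L^2(\mathbb R,\mu)$; for $f\in C^1(\mathbb R)$ Proposition~\ref{prop4.1} gives $\lim_{\varepsilon\downarrow0}I_\varepsilon^1(f,x,t)=\int_{I_x}f'(W_y)\,dy$, and the $L^2(\Omega)$-estimate inside its proof shows this convergence takes place in $L^2(\Omega)$. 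Granting the uniform bound, for $f\in\mathscr H_t$ choose $f_n\in C_c^\infty$ with $\|f_n-f\|_{\mathscr H_t}\to0$; applying the bound to $f-f_n$ gives
$$
\left\|I_\varepsilon^1(f,x,t)-I_{\varepsilon'}^1(f,x,t)\right\|_{L^2(\Omega)}\le 2\sqrt C\,\|f-f_n\|_{\mathscr H_t}+\left\|I_\varepsilon^1(f_n,x,t)-I_{\varepsilon'}^1(f_n,x,t)\right\|_{L^2(\Omega)},
$$
so $(I_\varepsilon^1(f,x,t))_{\varepsilon>0}$ is Cauchy in $L^2(\Omega)$ as $\varepsilon\downarrow0$; its limit is then also the limit in probability, which is the PQC $[f(W),W]_x^{(SQ)}$, and letting $\varepsilon\downarrow0$ in the uniform bound yields $E|[f(W),W]_x^{(SQ)}|^2\le C\|f\|_{\mathscr H_t}^2$. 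Thus everything reduces to the uniform bound.

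To prove it, decompose $I_\varepsilon^1=I_\varepsilon^{1,+}-I_\varepsilon^{1,-}$ as in~\eqref{sec4-eq4.000000}; it suffices to bound each $E|I_\varepsilon^{1,\pm}(f,x,t)|^2$, and by symmetry I treat $I_\varepsilon^{1,+}$. With $U_y=W_{y+\varepsilon}-W_y$, $V_z=W_{z+\varepsilon}-W_z$ and $\sigma_t^2=E[W_y^2]=\sqrt{t/\pi}$ (independent of $y$),
$$
E\left|I_\varepsilon^{1,+}(f,x,t)\right|^2=\frac1{\varepsilon^2}\int_{I_x}\int_{I_x}E\big[f(W_{y+\varepsilon})U_y\,f(W_{z+\varepsilon})V_z\big]\,dy\,dz,
$$
and I would split the domain at $|y-z|=\varepsilon$. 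On $\{|y-z|\le\varepsilon\}$, Cauchy--Schwarz in $\Omega$ gives $|E[f(W_{y+\varepsilon})U_yf(W_{z+\varepsilon})V_z]|\le(E[f(W_{y+\varepsilon})^2U_y^2])^{1/2}(E[f(W_{z+\varepsilon})^2V_z^2])^{1/2}$; conditioning on $W_{y+\varepsilon}=s\sim N(0,\sigma_t^2)$ yields $E[U_y^2\mid W_{y+\varepsilon}=s]\le C\varepsilon(1+s^2)$ (the conditional variance is $\le E[U_y^2]\asymp\varepsilon$ and the conditional mean is $O(\varepsilon s)$), hence
$$
E\big[f(W_{y+\varepsilon})^2U_y^2\big]\le C\varepsilon\int_{\mathbb R}f(s)^2(1+s^2)e^{-\sqrt\pi s^2/(2\sqrt t)}\,ds\le C\varepsilon\,\|f\|_{\mathscr H_t}^2/|x| ;
$$
here the weight $(\sqrt t+s^2)e^{-\sqrt\pi s^2/(2\sqrt t)}$ defining $\|\cdot\|_{\mathscr H_t}$ appears precisely as the Gaussian density of $W_{y+\varepsilon}$ times $1+s^2$. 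Since $\{(y,z)\in I_x\times I_x:|y-z|\le\varepsilon\}$ has area $\le2\varepsilon|x|$, the near-diagonal part of $\frac1{\varepsilon^2}\iint$ is $\le C\|f\|_{\mathscr H_t}^2$.

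On $\{|y-z|>\varepsilon\}$ the Cauchy--Schwarz bound is too lossy, so there I would use the Gaussian structure of the integrand: expanding $f$ in Hermite polynomials relative to $N(0,\sigma_t^2)$ and using the product rule for Wiener chaos, $E[f(W_{y+\varepsilon})U_y f(W_{z+\varepsilon})V_z]$ becomes a series over the chaos order $k$ whose $k$-th term is a linear combination of products of $k$ of the covariances $\sigma_t^2-\mu_{y+\varepsilon,y}=\tfrac12E[U_y^2]\asymp\varepsilon$, $E[U_yW_{z+\varepsilon}]$, $E[W_{y+\varepsilon}V_z]$, $E[U_yV_z]$, and $\mu_{y+\varepsilon,z+\varepsilon}^{k-1}$. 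One then feeds in Section~\ref{sec2}: $|E[U_yV_z]|\le C\varepsilon^2e^{-(y-z-\varepsilon)^2/(4t)}$ by Lemma~\ref{lem2.5}; $E[U_yW_{z+\varepsilon}],E[W_{y+\varepsilon}V_z]=O(\varepsilon)$ by the mean value theorem applied to $R_{\cdot,\cdot}(t,t)$ (as in Lemma~\ref{lem2.3}); and, by~\eqref{sec2-eq2.10}, $\sigma_t^2-\mu_{y+\varepsilon,z+\varepsilon}$ is comparable to $\frac{|y-z|\sqrt t}{\sqrt t+|y-z|}$, so $\mu_{y+\varepsilon,z+\varepsilon}^{k}$ is exponentially small once $|y-z|\gtrsim\sigma_t^2/k$. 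This last fact confines the $k$-th term to a diagonal band of width $\asymp\sigma_t^2/k$, so that $\frac1{\varepsilon^2}\iint_{|y-z|>\varepsilon}$ of it carries a compensating factor $k^{-1}$; summing over $k$ and comparing with $\|f\|_{\mathscr H_t}^2$, which in Hermite coordinates is a weighted $\ell^2$-sum of the coefficients of $f$ carrying the extra weight $k+1$, gives $E|I_\varepsilon^{1,+}(f,x,t)|^2\le C\|f\|_{\mathscr H_t}^2$, and similarly for $I_\varepsilon^{1,-}$. The main obstacle is exactly this far-diagonal estimate: the Gram matrix of $(W_{y+\varepsilon},W_{z+\varepsilon})$ degenerates as $y\to z$ (its determinant $\sigma_t^4-\mu_{y+\varepsilon,z+\varepsilon}^2$ vanishes like $|y-z|$, Lemma~\ref{lem2.7}), so one must check that the several $O(\varepsilon)$ mixed covariances do not combine into an integrand of order $\varepsilon^2/|y-z|$; this requires keeping track, uniformly in the chaos order, of the cancellation coming from $E[W_aW_b]=\sigma_t^2-\tfrac12|a-b|+O((a-b)^2)$ near $a=b$, and then verifying that the weight which finally emerges from the one-variable integration is precisely the one defining $\mathscr H_t$. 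Everything else is routine Gaussian computation.
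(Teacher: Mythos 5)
Your overall architecture (a uniform-in-$\varepsilon$ bound $E|I_\varepsilon^1(f,x,t)|^2\le C\|f\|_{\mathscr H_t}^2$ plus a density argument to get the Cauchy property in $L^2(\Omega)$) is sound, and your near-diagonal estimate is correct and in fact cleaner than the paper's: conditioning $U_y=W_{y+\varepsilon}-W_y$ on $W_{y+\varepsilon}$ and using $E[W_{y+\varepsilon}U_y]=\tfrac12E[U_y^2]\asymp\varepsilon$ does give $E[f(W_{y+\varepsilon})^2U_y^2]\le C\varepsilon\|f\|_{\mathscr H_t}^2/|x|$, and the strip $\{|y-z|\le\varepsilon\}$ then contributes $C\|f\|_{\mathscr H_t}^2$. (Two small repairs are needed in the reduction: Proposition~\ref{prop4.1} yields only almost sure convergence via Lemma~\ref{Grad-Nourdin}, so for $f_n\in C_c^\infty$ you must upgrade to $L^2(\Omega)$ convergence, e.g.\ by dominating $|I_\varepsilon^1(f_n,x,t)|$ by $\|f_n'\|_\infty\cdot\frac1\varepsilon\int_{I_x}(W_{y+\varepsilon}-W_y)^2dy$ and checking a uniform fourth-moment bound; this is routine but not what you cited.)

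The genuine gap is the far-diagonal estimate, which is the heart of the theorem, and you have not proved it — you have only named the difficulty. The dangerous scenario you correctly identify (several $O(\varepsilon)$ mixed covariances combining against the degenerating Gram determinant $\sigma_t^4-\mu_{t,y,z}^2\asymp|y-z|\sqrt t$ of Lemma~\ref{lem2.7} to produce an integrand of order $\varepsilon^2/|y-z|$) is exactly what must be ruled out, and your proposal defers it to "keeping track, uniformly in the chaos order, of the cancellation". Without that bookkeeping the bound is not established: the diagram-formula coefficients grow with $k$, the norm $\|f\|_{\mathscr H_t}$ must be shown to dominate the resulting weighted $\ell^2$ sum of Hermite coefficients, and the claimed $k^{-1}$ gain from the band of width $\asymp\sqrt t/k$ has to beat all of this simultaneously. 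The paper avoids the chaos expansion entirely: it applies the Gaussian integration by parts (duality relation) twice to get the five-term decomposition \eqref{sec7-eq7.9}, and then exploits an explicit cancellation at the level of densities — the combination $E[f''(W_y)f(W_{y'})]+E[f'(W_y)f'(W_{y'})]$ obeys the nonsingular bound \eqref{sec3-eq3.999}, while the singular bound \eqref{sec3-eq3.1000} of order $|y-y'|^{-1}$ for $E[f''(W_y)f(W_{y'})]$ alone is always paired with a covariance factor such as $E[(W_y-W_{y'})(W_{y+\varepsilon}-W_y)]$ that supplies the compensating $|y-y'|$ via Lemmas~\ref{lem2.3} and~\ref{lem2.5}. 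That regrouping is the actual content of the proof; if you want to keep the chaos-expansion route you must either reproduce an analogous cancellation order by order or find a closed-form resummation, and as written your argument does neither.
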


In order to prove the theorem we claim that the following two statements with $f\in {\mathscr H}_t$:
\begin{itemize}
\item [(1)] for any $\varepsilon>0$ and $x\in {\mathbb R}$, $I_\varepsilon^{1,\pm}(f,x,\cdot)\in L^2(\Omega)$. That is,
\begin{align*}
&E\left|I_\varepsilon^{1,-}(f,x,\cdot)\right|^2\leq C \|f\|_{{\mathscr H}_t}^2,\\
&E\left|I_\varepsilon^{1,+}(f,x,\cdot)\right|^2\leq C \|f\|_{{\mathscr H}_t}^2.
\end{align*}
\item [(2)] $I_\varepsilon^{1,-}(f,x,t)$ and $I_\varepsilon^{1,+}(f,x,t)$ are two Cauchy's sequences in $L^2(\Omega)$ for all $t>0$ and $x\in {\mathbb R}$. That is,
\begin{equation*}
E\left|I_{\varepsilon_1}^{1,-}(f,x,t)-I_{\varepsilon_2}^{1,-}(f,x,t) \right|^2\longrightarrow 0,
\end{equation*}
and
\begin{equation*}
E\left|I_{\varepsilon_1}^{1,+}(f,x,t) -I_{\varepsilon_2}^{1,+}(f,x,t)\right|^2
\longrightarrow 0
\end{equation*}
for all $x\in {\mathbb R}$, as $\varepsilon_1,\varepsilon_2\downarrow 0$.
\end{itemize}

We split the proof of two statements into two parts.

\begin{proof}[Proof of the statement (1)]
Recall that $W_x:=u(\cdot,x)$. We have
\begin{align*}
E|I_\varepsilon^{1,-}(f,x,\cdot)|^2&=\frac{1}{\varepsilon^2}
\int_{I_x}\int_{I_x}dydy'E\left[f({W_y})f({W_{y'}}) (W_{y+\varepsilon}-{W_y}) (W_{y'+\varepsilon}-{W_{y'}})\right]
\end{align*}
for all $\varepsilon>0$ and $x\in {\mathbb R}$. Now, let us estimate the expression
$$
\Phi_{\varepsilon_1,\varepsilon_2}(y,y'):
=E\left[f({W_y})f({W_{y'}})(W_{y+\varepsilon_1}-{W_y}) (W_{y'+\varepsilon_2}-{W_{y'}})\right]
$$
for all $\varepsilon_1,\varepsilon_2>0$ and $y,y'\in {\mathbb R}$. To estimate the above expression, it is enough to assume that $f\in {\mathscr E}$ by denseness, and moreover, by approximating we can assume that $f$ is an infinitely differentiable function with compact support. It follows from the duality relationship~\eqref{sec2-2-eq2.1} that
\begin{equation}\label{sec7-eq7.9}
\begin{split}
\Phi_{\varepsilon_1,\varepsilon_2}(y,y') &=E\left[f({W_y})f({W_{y'}})(W_{y+\varepsilon_1} -{W_y})\int_{y'}^{y'+\varepsilon_2}\delta W(l)\right]\\
&=E\left[{W_y}(W_{y'+\varepsilon_2}-{W_{y'}})\right] E\left[f'({W_y})f({W_{y'}})(W_{y+\varepsilon_1}-{W_y})\right]\\
&\quad+E\left[{W_{y'}}(W_{y'+\varepsilon_2}-{W_{y'}})\right] E\left[f({W_y})f'({W_{y'}})(W_{y+\varepsilon_1}-{W_y})\right]\\
&\quad+E\left[(W_{y+\varepsilon_1}-{W_y}) (W_{y'+\varepsilon_2}-{W_{y'}})\right]
E\left[f({W_y})f({W_{y'}}\right]\\
&=E\left[{W_y}(W_{y'+\varepsilon_2}-{W_{y'}})\right] E\left[{W_y}(W_{y+\varepsilon_2}-{W_y})\right] E\left[f''({W_y})f({W_{y'}})\right]\\
&\quad+E\left[{W_y}(W_{y'+\varepsilon_2}-{W_{y'}})\right] E\left[{W_{y'}}(W_{y+\varepsilon_2}-{W_y})\right]
E\left[f'({W_y})f'({W_{y'}})\right]\\
&\quad+E\left[{W_{y'}}(W_{y'+\varepsilon_2}-{W_{y'}})\right] E\left[{W_y}(W_{y+\varepsilon_1}-{W_y})\right]
E\left[f'({W_y})f'({W_{y'}}))\right]\\
&\quad+E\left[{W_{y'}}(W_{y'+\varepsilon_2}-{W_{y'}})\right] E\left[{W_{y'}}(W_{y+\varepsilon_1}-{W_y})\right]
E\left[f({W_y})f''({W_{y'}})\right]\\
&\quad+E\left[(W_{y+\varepsilon_1}-{W_y}) (W_{y'+\varepsilon_2}-{W_{y'}})\right]
E\left[f({W_y})f({W_{y'}}\right]\\
&\equiv \sum_{j=1}^5\Psi_j(y,y',\varepsilon_1,\varepsilon_2)
\end{split}
\end{equation}
for all $y,y'\in {\mathbb R}$ and $\varepsilon_1,\varepsilon_2>0$. In order to end the proof we claim to estimate
$$
\Lambda_j:=\frac{1}{\varepsilon^2}\int_{I_x}\int_{I_x} \Psi_j(y,y',\varepsilon,\varepsilon)dydy',\quad j=1,2,3,4,5
$$
for all $\varepsilon>0$ small enough.

For $j=5$, from the fact
\begin{align*}
|E&\left[(W_{y+\varepsilon}-{W_y}) (W_{y'+\varepsilon}-{W_{y'}})\right]|\leq \varepsilon
\end{align*}
for $0<|y-y'|\leq \varepsilon$, we have
\begin{align*}
\frac{1}{\varepsilon^2}\int_{\substack{|y-y'|\leq \varepsilon\\ y,y'\in I_x}}&|\Psi_5(y,y',\varepsilon,\varepsilon)|dydy' \leq \frac{1}{\varepsilon}\int_{\substack{|y-y'|\leq \varepsilon\\ y,y'\in I_x}}
E\left|f({u_y})f({W_{y'}})\right|dydy'\\
&\leq \frac{1}{2\varepsilon}\int_{\substack{|y-y'|\leq \varepsilon\\ y,y'\in I_x}}
E\left[f({W_y})|^2+|f({W_{y'}})|^2\right]dydy'\\
&\leq \frac{1}{\varepsilon}\int_{\substack{|y-y'|\leq \varepsilon\\ y,y'\in I_x}}E\left|f({W_y})\right|^2dydy'\\
&\leq \int_{I_x}E\left|f({W_y})\right|^2dy=\|f\|_{{\mathscr H}_t}^2
\end{align*}
for all $\varepsilon>0$ and $x\in {\mathbb R}$. Moreover, for $|y-y'|>\varepsilon$ we have
\begin{align*}
|E&\left[(W_{y+\varepsilon}-{W_y}) (W_{y'+\varepsilon}-{W_{y'}})\right]|\leq \frac1{4\sqrt{t\pi}}\varepsilon^2 e^{-\frac{|y-y'-\varepsilon|^2}{4t}}
\end{align*}
by~\eqref{sec2-eq2.7}, which deduces
\begin{align*}
\frac{1}{\varepsilon^2}&\int_{\substack{|y-y'|>\varepsilon\\ y,y'\in I_x}}|\Psi_5(y,y',\varepsilon,\varepsilon)|dydy'\\
&\leq
\frac1{4\sqrt{t\pi}}\int_{\substack{|y-y'|>\varepsilon\\ y,y'\in I_x}}E\left|f({W_y})f({W_{y'}})\right|
e^{-\frac{|y-y'-\varepsilon|^2}{4t}}dydy'\\
&\leq \frac1{8\sqrt{t\pi}}\int_{\substack{|y-y'|>\varepsilon\\ y,y'\in I_x}}
E\left[f({W_y})|^2+|f({W_{y'}})|^2\right] e^{-\frac{|y-y'-\varepsilon|^2}{4t}}dydy'\\
&\leq \frac1{4\sqrt{t\pi}}\int_{\substack{|y-y'|>\varepsilon\\ y,y'\in I_x}}E\left|f({W_y})\right|^2 e^{-\frac{|y-y'-\varepsilon|^2}{4t}}dydy'\\
&\leq \frac1{2}\int_{I_x}
E\left|f({W_y})\right|^2dy\int_{-\infty}^\infty
\frac1{\sqrt{2\pi(2t)}}
e^{-\frac{|y-y'-\varepsilon|^2}{4t}}dy'\\
&=\frac1{2}\int_{I_x}E\left|f({W_y})\right|^2dy =\frac12\|f\|_{{\mathscr H}_t}^2
\end{align*}
for all $\varepsilon>0$ and $x\in {\mathbb R}$. This shows that
$$
\Lambda_5=\frac{1}{\varepsilon^2}\left|\int_{I_x}\int_{I_x} \Psi_5(y,y',\varepsilon,\varepsilon)dydy'\right|\leq \|f\|_{{\mathscr H}_t}^2
$$
for all $\varepsilon>0$ and $x\in {\mathbb R}$.

Next, let us estimate $\sum\limits_{j=1}^4\Lambda_j$. We have
\begin{align*}
E\left[f''({W_y})f({W_{y'}})\right]&=\int_{\mathbb{R}^2}
f(x)f(x')\frac{\partial^{2}}{\partial x^2}
\varphi(x,x')dxdx'\\
&=\int_{\mathbb{R}^2} f(x)f(x')\left\{\frac1{\rho^4}(\sigma^2_{t,y'}x-\mu_{t,y,y'}
x')^2-\frac{\sigma^2_{t,y'}}{\rho^2}\right\}\varphi(x,x')dxdx'
\end{align*}
and
\begin{align*}
E[&f'({W_y})f'({W_{y'}})]=\int_{\mathbb{R}^2}
f(x)f(x')\frac{\partial^{2}}{\partial x\partial
x'}\varphi(x,x')dxdx'\\
&=\int_{\mathbb{R}^2} f(x)f(x')\left\{\frac1{\rho^4}(\sigma^2_{t,y}x'-\mu_{t,y,y'}
x)(\sigma^2_{t,y'}x-\mu_{t,y,y'} x')+\frac{\mu_{t,y,y'}}{\rho^2}\right\}\varphi(x,x')dxdx',
\end{align*}
where $\rho^2=\sigma^2_{t,y}\sigma^2_{t,y'}-\mu_{t,y,y'}^2$ and
$\varphi(x,y)$ is the density function of $({W_y},{W_{y'}})$. That is
$$
\varphi(x,x')=\frac1{2\pi\rho}\exp\left\{-\frac{1}{2\rho^2}\left(
\sigma^2_{t,y'}x^2-2\mu_{t,y,y'}xx'+\sigma^2_{t,y}{x'}^2\right) \right\}.
$$
Combining this with the identity
\begin{align*}
(\sigma^2_{t,y}x'-\mu_{t,y,y'}
x)&(\sigma^2_{t,y'}x-\mu_{t,y,y'} x')\\
&=\rho^2x'\left(x-\frac{\mu_{t,y,y'}}{\sigma^2_{t,y'}}x'\right) -\mu_{t,y,y'}\sigma^2_{t,y'}\left(x -\frac{\mu_{t,y,y'}}{\sigma^2_{t,y'}}x'\right)^2,
\end{align*}
we get
\begin{align*}
E[f''({W_y})&f({W_{y'}})]+E[f'({W_y})f'({W_{y'}})]\\
&=\frac{\mu_{t,y,y'}-\sigma^2_{t,y'}}{\rho^2}\int_{\mathbb{R}^2} f(x)f(x') \varphi(x,x')dxdx'\\
&\qquad+\frac1{\rho^2}\int_{\mathbb{R}^2} f(x)f(x')
x'\left(x-\frac{\mu_{t,y,y'}}{\sigma^2_{t,y'}}x'\right) \varphi(x,x')dxdx'\\
&\qquad+\frac{\sigma^2_{t,y'}}{\rho^4}\left(\sigma^2_{t,y'}- \mu_{t,y,y'}\right)\int_{\mathbb{R}^2} f(x)f(x')
\left(x-\frac{\mu_{t,y,y'}}{\sigma^2_{t,y'}}x'\right)^2 \varphi(x,x')dxdx'\\
&\equiv \Upsilon_1+\Upsilon_2+\Upsilon_3.
\end{align*}
A straightforward calculation shows that
\begin{align*}
\int_{\mathbb{R}^2}|f(x')|^2 &\left(x-\frac{\mu_{t,y,y'}}{\sigma^2_{t,y'}}x'\right)^{2m} \varphi(x,x')dxdx'\\
&=C_m\left(\frac{\rho^2}{\sigma^2_{t,y'}}\right)^m \int_{\mathbb{R}}|f(x')|^2\frac1{\sqrt{2\pi}\sigma_{t,y'}} e^{-\frac{{x'}^2}{2\sigma^2_{t,y'}}}dx'\\
&\leq C_m\left(\frac{\rho^2}{\sqrt{t}}\right)^m \int_{\mathbb{R}}|f(x)|^2\frac1{\sqrt[4]{t}} e^{-\frac{\sqrt{\pi}x^2}{2\sqrt{t}}}dx
\end{align*}
for all $m\geq 1$ and
\begin{align*}
\int_{\mathbb{R}^2}&|f(x)x'|^2 \varphi(x,x')dxdx'\\
&=\int_{\mathbb{R}}|f(x)|^2\frac1{\sqrt{2\pi}\sigma_{t,y}} e^{-\frac{{x}^2}{2\sigma^2_{t,y}}}dx
\int_{\mathbb{R}}|x'|^2\frac{\sigma_{t,y}}{\sqrt{2\pi}\rho} e^{-\frac{\sigma^2_{t,y}}{2\rho^2}
\left(x'-\frac{\mu_{t,y,y'}}{\sigma^2_{t,y}}x\right)^2}dx'\\
&=\int_{\mathbb{R}}|f(x)|^2\frac1{\sqrt{2\pi}\sigma_{t,y}} e^{-\frac{{x}^2}{2\sigma^2_{t,y}}}dx
\left(\frac{\rho^2}{\sigma^2_{t,y}}+
\frac{\mu_{t,y,y'}^2}{\sigma^4_{t,y}}x^2\right)\\
&\leq C\frac1{\sqrt[4]{\pi t}}\int_{\mathbb{R}}|f(x)|^2 e^{-\frac{\sqrt{\pi}x^2}{2\sqrt{t}}}\left(\sqrt{t}+x^2\right)dx
\end{align*}
since $\sigma^2_{t,y}=\sigma^2_{t,y'}=\sqrt{\frac{t}{\pi}}$. It follows that
\begin{align*}
|\Upsilon_1|&\leq \frac1{\rho^2}\left(\int_{\mathbb{R}^2}|f(x)x'|^2
\varphi(x,x')dxdx'\int_{\mathbb{R}^2}|f(x')|^2
\left|x-\frac{\mu_{t,y,y'}}{\sigma^2_{t,y'}}x'\right|^2 \varphi(x,x')dxdx'\right)^{\frac12}\\
&\leq C\frac{1}{\rho\sqrt[4]{t}}\int_{\mathbb{R}}|f(x)|^2 \frac1{\sqrt[4]{t}} e^{-\frac{\sqrt{\pi}x^2}{2\sqrt{t}}}\left(\sqrt{t}+x^2\right)dx
\end{align*}
and
\begin{align*}
|\Upsilon_3|&\leq
\frac{\sigma^2_{t,y'}}{\rho^4}\left|\sigma^2_{t,y'}- \mu_{t,y,y'}\right|\\
&\quad\cdot\left( \int_{\mathbb{R}^2}|f(x)|^2\varphi(x,x')dxdx'
\int_{\mathbb{R}^2}
|f(x')|^2\left(x-\frac{\mu_{t,y,y'}}{\sigma^2_{t,y'}}x'\right)^4 \varphi(x,x')dxdx'\right)^{1/2}\\
&\leq \frac{C\left|\sigma^2_{t,y'}-\mu_{t,y,y'}\right|}{\rho^2}
\int_{\mathbb{R}}|f(x)|^2\frac1{\sqrt[4]{t}} e^{-\frac{\sqrt{\pi}x^2}{2\sqrt{t}}}dx.
\end{align*}
Thus, we get the estimate
\begin{equation}\label{sec3-eq3.999}
\begin{split}
|E[f''({W_y})f({W_{y'}})]&+E[f'({W_y})f'({W_{y'}})]|\leq |\Upsilon_1|+|\Upsilon_2|+|\Upsilon_3|\\
&\leq \int_{\mathbb{R}}|f(x)|^2 \frac1{\sqrt[4]{t}} e^{-\frac{\sqrt{\pi}x^2}{2\sqrt{t}}}\left(\sqrt{t}+x^2\right)dx
\end{split}
\end{equation}
and
\begin{equation}\label{sec3-eq3.1000}
|E\left[f''({W_y})f({W_{y'}})\right]|\leq \frac{C}{|y-y'|}\int_{\mathbb{R}}|f(x)|^2 \frac1{\sqrt[4]{t}} e^{-\frac{\sqrt{\pi}x^2}{2\sqrt{t}}}dx
\end{equation}
by Lemma~\ref{lem2.7} and Lemma~\ref{lem2.3}. Now, we can estimate $\sum\limits_{j=1}^4\Lambda_j$. We have
\begin{align*}
\sum_{j=1}^4&\Psi_j(y,y',\varepsilon,\varepsilon)\\
&=E\left[{W_y}(W_{y'+\varepsilon}-{W_{y'}})\right] E\left[({W_y}-{W_{y'}})(W_{y+\varepsilon}-{W_y})\right] E\left[f''({W_y})f({W_{y'}})\right]\\
&\;\;+E\left[{W_y}(W_{y'+\varepsilon}-{W_{y'}})\right] E\left[{W_{y'}}(W_{y+\varepsilon}-{W_y})\right]\\ &\qquad\qquad\qquad\cdot\left(E\left[f'({W_y})f'({W_{y'}})\right] +E\left[f''({W_y})f({W_{y'}})\right]\right)\\
&\;\;+E\left[{W_{y'}}(W_{y'+\varepsilon}-{W_{y'}})\right] E\left[{W_y}(W_{y+\varepsilon}-{W_y})\right]\\
&\qquad\qquad\qquad\cdot\left(E\left[f'({W_y})f'({W_{y'}})\right]+
E\left[f({W_y})f''({W_{y'}})\right]\right)\\
&\;\;+E\left[{W_{y'}}(W_{y'+\varepsilon}-{W_{y'}})\right] E\left[({W_{y'}}-{W_y})(W_{y+\varepsilon}-{W_y})\right]
E\left[f({W_y})f''({W_{y'}})\right].
\end{align*}
Combining this with~\eqref{sec3-eq3.999},~\eqref{sec3-eq3.1000},  Lemma~\ref{lem2.3} and Lemma~\ref{lem2.5}, we get
\begin{align*}
\left|\sum\limits_{j=1}^4\Lambda_j\right| \leq \frac{1}{\varepsilon^2}\int_{I_x}\int_{I_x} \left|\sum\limits_{j=1}^4\Psi_j(y,y',\varepsilon,\varepsilon)
\right|dydy'\leq C\|f\|_{{\mathscr H}_t}^2
\end{align*}
for all $\varepsilon>0$ and $x\in {\mathbb R}$. This shows that
\begin{align*}
E\left|I_\varepsilon^{1,-}(f,x,\cdot)\right|^2\leq C \|f\|_{{\mathscr H}_t}^2.
\end{align*}
Similarly, one can shows that the estimate
\begin{align*}
E\left|I_\varepsilon^{1,+}(f,x,\cdot)\right|^2\leq C \|f\|_{{\mathscr H}_t}^2,
\end{align*}
and the first statement follows.
\end{proof}

\begin{proof}[Proof of the statement (2)]
Without loss of generality we assume that $\varepsilon_1>\varepsilon_2$. We prove only the first convergence and similarly one can prove the second convergence. We have
\begin{align*}
E\bigl|&I_{\varepsilon_1}^{1,-}(f,x,t) -I_{\varepsilon_2}^{1,-}(f,x,t) \bigr|^2\\
&=\frac1{\varepsilon_1^2}\int_{I_x}\int_{I_x}Ef(W_y)f(W_{y'})
(W_{y+\varepsilon_1}-W_{y})(W_{y'+\varepsilon_1}-W_{y'})dydy'\\
&\qquad-2
\frac1{\varepsilon_1\varepsilon_2}\int_{I_x}\int_{I_x} Ef(W_y)f(W_{y'})
(W_{y+\varepsilon_1}-W_{y})(W_{y'+\varepsilon_2}-W_{y'})dydy'\\
&\qquad+\frac1{\varepsilon_2^2}\int_{I_x}\int_{I_x}Ef(W_y)f(W_{y'})
(W_{y+\varepsilon_2}-W_{y})(W_{y'+\varepsilon_2}-W_{y'})dydy'\\
&\equiv \frac1{\varepsilon_1^2\varepsilon_2}\int_{I_x}\int_{I_x}
\left\{\varepsilon_2\Phi_{y,y'}(1,\varepsilon_1)-\varepsilon_1
\Phi_{y,y'}(2,\varepsilon_1,\varepsilon_2)\right\}dydy'\\
&\qquad+
\frac1{\varepsilon_1\varepsilon_2^2}\int_{I_x}\int_{I_x}\left\{
\varepsilon_1\Phi_{y,y'}(1,\varepsilon_2)-\varepsilon_2
\Phi_{y,y'}(2,\varepsilon_1,\varepsilon_2)\right\}dydy',
\end{align*}
for all $\varepsilon_1,\varepsilon_2>0$ and $x\in {\mathbb R}$,
where
$$
\Phi_{y,y'}(1,\varepsilon) =E\left[f(W_y)f(W_{y'})(W_{y+\varepsilon}
-W_y)(W_{y'+\varepsilon}-W_{y'})\right],
$$
and
$$
\Phi_{y,y'}(2,\varepsilon_1,\varepsilon_2) =E\left[f(W_y)f(W_{y'})(W_{y+\varepsilon_1}
-W_y)(W_{y'+\varepsilon_2}-W_{y'})\right].
$$
To end the proof, it is enough to assume that $f\in {\mathscr E}$ by denseness, and moreover, by approximating we can assume that $f$ is an infinitely differentiable function with compact support. It follows from~\eqref{sec7-eq7.9} that
\begin{align*}
\Phi_{y,y'}(1,\varepsilon)=
\sum_{j=1}^5\Psi_j(y,y',\varepsilon,\varepsilon),\quad
\Phi_{y,y'}(2,\varepsilon_1,\varepsilon_2)=
\sum_{j=1}^5\Psi_j(y,y',\varepsilon_1,\varepsilon_2),
\end{align*}
which give
\begin{align*}
\varepsilon_j&\Phi_{y,y'}(1,\varepsilon_i)-\varepsilon_i
\Phi_{y,y'}(2,\varepsilon_1,\varepsilon_2)\\
&=A_{y,y'}(1,\varepsilon_i,j)E\left[f''({W_y})f({W_{y'}})\right] +A_{y,y'}(2-1,\varepsilon_i,j)E\left[f'({W_y})f'({W_{y'}})\right]\\
&\quad+A_{y,y'}(3,\varepsilon_i,j)E\left[f({W_y})f''({W_{y'}}) \right]+A_{y,y'}(2-2,\varepsilon_i,j)E\left[f'({W_y})f'({W_{y'}}) \right]\\
&\quad+A_{y,y'}(4,\varepsilon,j)E\left[f({W_y})f({W_{y'}}\right] \end{align*}
with $i,j\in \{1,2\}$ and $i\neq j$, where
\begin{align*}
A_{y,y'}(1,\varepsilon,j):=\varepsilon_j &E\left[{W_y}(W_{y'+\varepsilon} -{W_{y'}})\right] E\left[{W_y}(W_{y+\varepsilon}-{W_y})\right]\\
&\qquad-\varepsilon E\left[{W_y}(W_{y'+\varepsilon_2} -{W_{y'}})\right]E\left[{W_y}(W_{y+\varepsilon_1}-{W_y})\right],\\
A_{y,y'}(2-1,\varepsilon,j):=
\varepsilon_j&E\left[{W_y}(W_{y'+\varepsilon}-{W_{y'}})\right] E\left[{W_{y'}}(W_{y+\varepsilon}-{W_y})\right]\\
&\qquad-\varepsilon E\left[{W_y}(W_{y'+\varepsilon_2}-{W_{y'}})\right] E\left[{W_{y'}}(W_{y+\varepsilon_1}-{W_y})\right],\\
A_{y,y'}(2-2,\varepsilon,j):=\varepsilon_j& E\left[{W_{y'}}(W_{y'+\varepsilon}-{W_{y'}})\right] E\left[{W_y}(W_{y+\varepsilon}-{W_y})\right]\\
&\qquad-\varepsilon E\left[{W_{y'}}(W_{y'+\varepsilon_2}-{W_{y'}})\right] E\left[{W_y}(W_{y+\varepsilon_1}-{W_y})\right],\\
A_{y,y'}(3,\varepsilon,j):=
\varepsilon_j&E\left[{W_{y'}}(W_{y'+\varepsilon}-{W_{y'}})\right] E\left[{W_{y'}}(W_{y+\varepsilon}-{W_y})\right]\\
&\qquad-\varepsilon E\left[{W_{y'}}(W_{y'+\varepsilon_2}-{W_{y'}})\right] E\left[{W_{y'}}(W_{y+\varepsilon_1}-{W_y})\right],\\
A_{y,y'}(4,\varepsilon,j):=\varepsilon_j& E\left[(W_{y+\varepsilon}-{W_y}) (W_{y'+\varepsilon}-{W_{y'}})\right]
-\varepsilon E\left[(W_{y+\varepsilon_1}-{W_y}) (W_{y'+\varepsilon_2}-{W_{y'}})\right]
\end{align*}
for all $\varepsilon_1,\varepsilon_2>0$ and $y,y'\in {\mathbb R}$. Now, we claim that the following convergences hold:
\begin{align}\label{sec3-eq3.12}
\frac1{\varepsilon_i^2\varepsilon_j}\int_0^t\int_0^t
\left\{\varepsilon_j\Phi_{y,y'}(1,\varepsilon_i)-\varepsilon_i
\Phi_{y,y'}(2,\varepsilon_1,\varepsilon_2)\right\}dsdr
\longrightarrow 0
\end{align}
with $i,j\in \{1,2\}$ and $i\neq j$, as $\varepsilon_1,\varepsilon_2\to 0$. We decompose that
\begin{align*}
\varepsilon_j&\Phi_{y,y'}(1,\varepsilon_i)-\varepsilon_i
\Phi_{y,y'}(2,\varepsilon_1,\varepsilon_2)\\
&=\left\{A_{y,y'}(1,\varepsilon_i,j) -A_{y,y'}(2-1,\varepsilon_i,j)\right\} E\left[f''({W_y})f({W_{y'}})\right]\\
&\qquad+A_{y,y'}(2-1,\varepsilon_i,j)
 \left\{E\left[f'({W_y})f'({W_{y'}})\right] +E\left[f''({W_y})f({W_{y'}})\right]\right\}\\
&\qquad+\left\{A_{y,y'}(3,\varepsilon_i,j) -A_{y,y'}(2-2,\varepsilon_i,j)\right\}E\left[f({W_y})f''({W_{y'}}) \right]\\
&\qquad+A_{y,y'}(2-2,\varepsilon_i,j)\left\{ E\left[f'({W_y})f'({W_{y'}})\right]+
E\left[f({W_y})f''({W_{y'}}) \right]\right\}\\
&\qquad+A_{y,y'}(4,\varepsilon,j)E\left[f({W_y})f({W_{y'}}\right] \end{align*}
with $i,j\in \{1,2\}$ and $i\neq j$. By symmetry we only need to introduce the convergence~\eqref{sec3-eq3.12} with $i=1$ and $j=2$.

{\bf Step I.} The following convergence holds:
\begin{align}\label{sec3-eq3.13}
\frac1{\varepsilon_1^2\varepsilon_2}\int_{I_x}\int_{I_x}
A_{y,y'}(4,\varepsilon_1,2)E\left[f({W_y})f(W_{y'})\right] dydy'
\longrightarrow 0,
\end{align}
as $\varepsilon_1,\varepsilon_2\to 0$. We have
\begin{align*}
A_{y,y'}(4,&\varepsilon_1,2)=\varepsilon_2 E\left[(W_{y+\varepsilon_1}-{W_y}) (W_{y'+\varepsilon_1}-{W_{y'}})\right]\\
&\qquad\qquad-\varepsilon_1E\left[(W_{y+\varepsilon_1}-{W_y}) (W_{y'+\varepsilon_2}-{W_{y'}})\right]\\
&=\varepsilon_2\left\{EW_{y+\varepsilon_1}W_{y'+\varepsilon_1}
-E{W_y}W_{y'+\varepsilon_1}-EW_{y+\varepsilon_1}{W_{y'}} +E{W_y}{W_{y'}}\right\}\\
&\qquad\qquad-\varepsilon_1\left\{EW_{y+\varepsilon_1}W_{y'+\varepsilon_2}
-E{W_y}W_{y'+\varepsilon_2}-EW_{y+\varepsilon_1}{W_{y'}} +E{W_y}{W_{y'}}\right\}\\
&=\frac1{2\sqrt{\pi}}\varepsilon_2\left(\int_0^t\frac2{\sqrt{r}}
e^{-\frac{(y-y')^2}{4r}}dr
-\int_0^t\frac1{\sqrt{r}}
e^{-\frac{(y-y'-\varepsilon_1)^2}{4r}}dr-\int_0^t\frac1{\sqrt{r}}
e^{-\frac{(y+\varepsilon_1-y')^2}{4r}}dr\right)\\
&\qquad\qquad-\frac1{2\sqrt{\pi}}\varepsilon_1\left(\int_0^t\frac1{\sqrt{r}}
e^{-\frac{(y-y'+\varepsilon_1-\varepsilon_2)^2}{4r}}dr
-\int_0^t\frac1{\sqrt{r}} e^{-\frac{(y-y'-\varepsilon_2)^2}{4r}}dr\right.\\
&\qquad\qquad\left.-\int_0^t\frac1{\sqrt{r}}
e^{-\frac{(y+\varepsilon_1-y')^2}{4r}}dr
+\int_0^t\frac1{\sqrt{r}}e^{-\frac{(y-y')^2}{4r}}dr\right).
\end{align*}
Consider the next function on ${\mathbb R}_{+}$ (see Section~\ref{sec2}):
$$
f(x)=x\int_{x}^\infty\frac1{s^2}e^{-\frac{s^2}2}ds =e^{-\frac{x^2}2}-x\int_{x}^\infty e^{-\frac{s^2}2}ds.
$$
Then we have
\begin{align*}
A_{y,y'}&(4,\varepsilon_1,2) =\frac{\sqrt{t}}{\sqrt{2\pi}}\varepsilon_2\left(
2f(\frac{y-y'}{\sqrt{2t}})-f(\frac{y-y'-\varepsilon_1}{\sqrt{2t}})
-f(\frac{y+\varepsilon_1-y'}{\sqrt{2t}})\right)\\
&-\frac{\sqrt{t}}{\sqrt{2\pi}} \varepsilon_1\left(f(\frac{y-y'+\varepsilon_1-\varepsilon_2}{ \sqrt{2t}})-f(\frac{y-y'-\varepsilon_2}{\sqrt{2t}})
-f(\frac{y+\varepsilon_1-y'}{\sqrt{2t}})+
f(\frac{y-y'}{\sqrt{2t}})\right).
\end{align*}
Notice that, by Taylor's expansion
$$
f(x)=1-\sqrt{\frac{\pi}{2}}x+\frac12x^2-\frac1{4!}x^4+o(x^4).
$$
One get
\begin{align*}
2f(\frac{y-y'}{\sqrt{2t}})&-f(\frac{y-y'-\varepsilon_1}{\sqrt{2t}})
-f(\frac{y-y'+\varepsilon_1}{\sqrt{2t}})\\
&=\frac1{4t}\left\{2(y-y')^2-(y-y'-\varepsilon_1)^2 -(y-y'+\varepsilon_1)^2\right\}\\
&\qquad-\frac1{4\times 4!t^2} \left\{2(y-y')^4-(y-y'-\varepsilon_1)^4 -(y-y'+\varepsilon_1)^4\right\}+\alpha_1\\
&=-\frac1{4t}\varepsilon_1^2+\frac1{4\times 4!t^2} \left(12(y-y')^2\varepsilon_1^2+2\varepsilon_1^4\right)+\alpha_1
\end{align*}
with $\alpha_1=\frac1{4t^2} o\left(12(y-y')^2\varepsilon_1^2+2\varepsilon_1^4\right)$ and
\begin{align*}
&f(\frac{y-y'+\varepsilon_1-\varepsilon_2}{ \sqrt{2t}})-f(\frac{y-y'-\varepsilon_2}{\sqrt{2t}})
-f(\frac{y-y'+\varepsilon_1}{\sqrt{2t}})+
f(\frac{y-y'}{\sqrt{2t}})\\
&=\frac1{4t}\left\{(y-y'+\varepsilon_1-\varepsilon_2)^2 -(y-y'-\varepsilon_2)^2 -(y-y'+\varepsilon_1)^2+(y-y')^2\right\}\\
&\qquad-\frac1{4\times 4!t^2} \left\{(y-y'+\varepsilon_1-\varepsilon_2)^4-(y-y'-\varepsilon_2)^4 -(y-y'+\varepsilon_1)^4+(y-y')^4\right\}+\alpha_2\\
&=-\frac1{4t}\varepsilon_1\varepsilon_2+\frac1{4\times 4!t^2} \left\{12(y-y')^2\varepsilon_1\varepsilon_2 +\varepsilon_1\varepsilon_2(\varepsilon_1-\varepsilon_2) \left(12(y-y')+
2\varepsilon_1-\varepsilon_2\right) \right\}+\alpha_2
\end{align*}
with $\alpha_2=\frac1{4t^2} o\left(12(y-y')^2\varepsilon_1\varepsilon_2 +\varepsilon_1\varepsilon_2(\varepsilon_1-\varepsilon_2) \left(12(y-y')+
2\varepsilon_1-\varepsilon_2\right)\right)$. It follows that
\begin{align*}
\frac1{\varepsilon_1^2\varepsilon_2}&|A_{y,y'}(4,\varepsilon_1,2)| \leq \frac{C}{t^{3/2}}\left(\varepsilon_1
+\frac{o(\varepsilon_1\varepsilon_2)}{\varepsilon_1\varepsilon_2}
+\frac{o(\varepsilon_1^2)}{\varepsilon_1^2}\right)(|x|^2+|x|+1).
\end{align*}
for all $0<\varepsilon_2<\varepsilon_1<1$ and $y,y'\in I_x$, which shows that the convergence~\eqref{sec3-eq3.12} holds since $f\in {\mathscr H}_t$.

{\bf Step II.} The following convergence holds:
\begin{equation}\label{sec3-eq3.14}
\begin{split}
\frac1{\varepsilon_1^2\varepsilon_2}\int_{I_x}\int_{I_x} &\left\{A_{y,y'}(1,\varepsilon_1,2) -A_{y,y'}(2-1,\varepsilon_1,2)\right\}\\
&\qquad\qquad\cdot E\left[f''({W_y})f({W_{y'}})\right] dydy'
\longrightarrow 0,
\end{split}
\end{equation}
as $\varepsilon_1,\varepsilon_2\to 0$. Keeping the notations in Step I, we have
\begin{align*}
f(\frac{y-y'-\varepsilon}{\sqrt{2t}})&-f(\frac{y-y'}{\sqrt{2t}}) =\frac1{\sqrt{\pi t}}\varepsilon-\frac{1}{4t}\left(2(y-y')-\varepsilon\right) \varepsilon\\
&-\frac1{4\times 4!t^2}\left(-4(y-y')^3+6\varepsilon(y-y')^2-4\varepsilon^3(y-y') +\varepsilon^3\right)\varepsilon
\end{align*}
for all $\varepsilon$ and
\begin{align*}
\Delta_1:&=\varepsilon_2\left(f(\frac{y-y'-\varepsilon_1}{\sqrt{2t}}) -f(\frac{y-y'}{\sqrt{2t}})\right)
-\varepsilon_1\left(
f(\frac{y-y'-\varepsilon_2}{\sqrt{2t}}) -f(\frac{y-y'}{\sqrt{2t}})\right)\\
&=\varepsilon_1\varepsilon_2(\varepsilon_1-\varepsilon_2)
\left\{\frac1{4t}-\frac{3(y-y')^2}{2\times 4!t^2}
+\frac{y-y'}{\times 4!t^2}(\varepsilon_1+\varepsilon_2)
-\frac1{4\times 4!t^2}(\varepsilon_1^2-\varepsilon_1\varepsilon_2+\varepsilon_2^2)
\right\}.
\end{align*}
It follows from~\eqref{sec2-eq2.7} that
\begin{align*}
|A_{y,y'}&(1,\varepsilon_1,2)-A_{y,y'}(2-1,\varepsilon_1,2)|\\
&=|\varepsilon_2E\left[{W_y}(W_{y'+\varepsilon_1} -{W_{y'}})\right] E\left[({W_y}-W_{y'})(W_{y+\varepsilon_1}-{W_y})\right]\\
&\qquad-\varepsilon_1 E\left[W_y(W_{y'+\varepsilon_2}-{W_{y'}})\right] E\left[(W_y-W_{y'})(W_{y+\varepsilon_1}-{W_y})\right]|\\
&=|E\left[(W_y-W_{y'})(W_{y+\varepsilon_1}-{W_y})\right]|\\
&\qquad\qquad\cdot\left|\varepsilon_2E\left[{W_y}(W_{y'+\varepsilon_1} -{W_{y'}})\right]-\varepsilon_1 E\left[W_y(W_{y'+\varepsilon_2}-{W_{y'}})\right]\right|\\
&=|E\left[(W_y-u_{y'})(W_{y+\varepsilon_1}-{W_y})\right]||\Delta_1|\\
&\leq C|y-y'|\varepsilon_1^2\varepsilon_2(\varepsilon_1-\varepsilon_2)
\left\{\frac1{t}+\frac1{t^2}({|x|^2}+|x|+1)\right\}
\end{align*}
for all $0<\varepsilon_2<\varepsilon_1<1$ and $y,y'\in I_x$, which implies that

\begin{align*}
\frac1{\varepsilon_1^2\varepsilon_2}&\int_{I_x}\int_{I_x} \left|A_{y,y'}(1,\varepsilon_1,2) -A_{y,y'}(2-1,\varepsilon_1,2)\right| |E\left[f''({W_y})f({W_{y'}})\right]|dydy'\\
&\leq C(\varepsilon_1-\varepsilon_2)
\left\{\frac1{t}+\frac1{t^2}({|x|^2}+|x|+1)\right\}
\int_{I_x}\int_{I_x}|y-y'||E\left[f''({W_y})f({W_{y'}})\right] |dydy'\\
&\leq C(\varepsilon_1-\varepsilon_2)
\left\{\frac1{t}+\frac1{t^2}({|x|^2}+|x|+1)\right\}
\|f\|^2_{{\mathscr H}_t}\longrightarrow 0,
\end{align*}
as $\varepsilon_1,\varepsilon_2\to 0$ since $f\in {\mathscr H}_t$. Similarly, we can show that the next convergence:
\begin{align}\label{sec3-eq3.15}
\frac1{\varepsilon_1^2\varepsilon_2}\int_{I_x}\int_{I_x} \left\{A_{y,y'}(3,\varepsilon_1,2) -A_{y,y'}(2-2,\varepsilon_1,2)\right\} E\left[f({W_y})f''({W_{y'}})\right]dydy'
\longrightarrow 0,
\end{align}
as $\varepsilon_1,\varepsilon_2\to 0$.

{\bf Step III.} The following convergence holds:
\begin{equation}\label{sec3-eq3.16}
\begin{split}
\frac1{\varepsilon_1^2\varepsilon_2}&\int_{I_x}\int_{I_x}
A_{y,y'}(2-1,\varepsilon_1,2)\\
&\qquad \cdot\left\{E\left[f'({W_y})f'({W_{y'}})\right] +E\left[f''({W_y})f({W_{y'}})\right]\right\}dydy'
\longrightarrow 0,
\end{split}
\end{equation}
as $\varepsilon_1,\varepsilon_2\to 0$. By Step II and Lemma~\ref{lem2.3}, we have
\begin{align*}
|A_{y,y'}(2-1,\varepsilon_1,2)|&= |E\left[{W_{y'}}(W_{y+\varepsilon_1}-{W_y})\right]|\\
&\qquad\cdot\left|
\varepsilon_2E\left[{W_y}(W_{y'+\varepsilon_1}-{W_{y'}})\right] -\varepsilon_1 E\left[{W_y}(W_{y'+\varepsilon_2}-{W_{y'}})\right] \right|\\
&=|E\left[{W_{y'}}(W_{y+\varepsilon_1}-{W_y})\right]||\Delta_1|\\
&\leq C\varepsilon_1^2\varepsilon_2(\varepsilon_1-\varepsilon_2)
\left\{\frac1{t}+\frac1{t^2}({|x|^2}+|x|+1)\right\}
\end{align*}
for all $0<\varepsilon_2<\varepsilon_1<1$ and $y,y'\in I_x$, which implies that
\begin{align*}
\frac1{\varepsilon_1^2\varepsilon_2}\int_{I_x}\int_{I_x}
&|A_{y,y'}(2-1,\varepsilon_1,2)| \left|E\left[f'({W_y})f'({W_{y'}})\right] +E\left[f''({W_y})f({W_{y'}})\right]\right|dydy'\\
&\leq
C(\varepsilon_1-\varepsilon_2)
\left\{\frac1{t}+\frac1{t^2}({|x|^2}+|x|+1)\right\}
\|f\|^2_{{\mathscr H}_t}\longrightarrow 0,
\end{align*}
as $\varepsilon_1,\varepsilon_2\to 0$ by~\eqref{sec3-eq3.999},  since $f\in {\mathscr H}_t$. Similarly, we can show that the next convergence:
\begin{equation}\label{sec3-eq3.17}
\begin{split}
\frac1{\varepsilon_1^2\varepsilon_2}&\int_{I_x}\int_{I_x}
A_{y,y'}(2-2,\varepsilon_1,2)\\
&\qquad\cdot\left\{E\left[f'({W_y})f'({W_{y'}})\right] +E\left[f({W_y})f''({W_{y'}})\right]\right\}dydy'
\longrightarrow 0,
\end{split}
\end{equation}
as $\varepsilon_1,\varepsilon_2\to 0$. Thus, we have proved the second statement.
\end{proof}

\begin{corollary}\label{cor4-1.1}
Let $f,f_1,f_2,\ldots \in {\mathscr H}_t$ such that $f_n\to f$ in ${\mathscr H}_t$. Then, the convergence
\begin{align}\label{sec7-eq7.6}
[f_n(W),W]^{(SQ)}_x\longrightarrow [f(W), W]^{(SQ)}_x
\end{align}
holds in $L^2(\Omega)$ for all $x\in {\mathbb R}$.
\end{corollary}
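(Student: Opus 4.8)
The plan is to exploit the linearity of the approximating integrals $I_\varepsilon^1(\cdot,x,t)$ in the function argument, together with the a priori $L^2$-bound established in Theorem~\ref{th3.1}. First I would note that for fixed $\varepsilon>0$, $x$ and $t$, the map
$$
f\longmapsto I_\varepsilon^1(f,x,t)=\frac1\varepsilon\int_{I_x} f(W_{y+\varepsilon})(W_{y+\varepsilon}-W_y)\,dy-\frac1\varepsilon\int_{I_x} f(W_y)(W_{y+\varepsilon}-W_y)\,dy
$$
is linear. Since $\mathscr H_t=L^2({\mathbb R},\mu)$ is a vector space, $f_n-f\in\mathscr H_t$, and Theorem~\ref{th3.1} guarantees that the PQCs $[f(W),W]^{(SQ)}_x$, $[f_n(W),W]^{(SQ)}_x$ and $[(f_n-f)(W),W]^{(SQ)}_x$ all exist as limits in $L^2(\Omega)$. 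Because $L^2(\Omega)$-limits respect linear combinations and $I_\varepsilon^1(f_n,x,t)-I_\varepsilon^1(f,x,t)=I_\varepsilon^1(f_n-f,x,t)$, we obtain the identity
$$
[f_n(W),W]^{(SQ)}_x-[f(W),W]^{(SQ)}_x=[(f_n-f)(W),W]^{(SQ)}_x
$$
in $L^2(\Omega)$ for every $x\in{\mathbb R}$.

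Next I would simply invoke the quantitative estimate of Theorem~\ref{th3.1}, applied to the function $f_n-f\in\mathscr H_t$, to get
$$
E\left|[f_n(W),W]^{(SQ)}_x-[f(W),W]^{(SQ)}_x\right|^2=E\left|[(f_n-f)(W),W]^{(SQ)}_x\right|^2\leq C\,\|f_n-f\|_{{\mathscr H}_t}^2 .
$$
Since $f_n\to f$ in $\mathscr H_t$ by hypothesis, the right-hand side tends to $0$, which is precisely the convergence~\eqref{sec7-eq7.6}.

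There is no genuine obstacle here: the entire content of the corollary is carried by the bound $E|[g(W),W]^{(SQ)}_x|^2\leq C\|g\|_{{\mathscr H}_t}^2$ of Theorem~\ref{th3.1}, which expresses that $g\mapsto[g(W),W]^{(SQ)}_x$ is a bounded, hence continuous, linear operator from $\mathscr H_t$ into $L^2(\Omega)$. The only point I would spell out with a little care is the transition from the "limit in probability" of the Definition to the linear manipulation above — i.e. one should first use Theorem~\ref{th3.1} to upgrade existence of each PQC to $L^2(\Omega)$ convergence, and only then exchange the (linear) limiting operation with the linear combination $I_\varepsilon^1(f_n,\cdot)-I_\varepsilon^1(f,\cdot)=I_\varepsilon^1(f_n-f,\cdot)$; everything else is immediate.
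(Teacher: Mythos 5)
Your argument is correct and is exactly the intended one: the paper states this corollary without proof as an immediate consequence of Theorem~\ref{th3.1}, whose bound $E\bigl|[g(W),W]^{(SQ)}_x\bigr|^2\leq C\|g\|_{{\mathscr H}_t}^2$ (with $C$ independent of $g$) makes $g\mapsto [g(W),W]^{(SQ)}_x$ a bounded linear map from ${\mathscr H}_t$ into $L^2(\Omega)$. Your care in first upgrading the limits to $L^2(\Omega)$ before passing linearity through them is the right way to make this rigorous.
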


\section{The It\^o's formula of process $\{u(\cdot,x),x\in {\mathbb R}\}$}\label{sec4-1}
In this section, as a application of the previous section we discuss the It\^o calculus of the process $W=\{W_x=u(\cdot,x),x\in {\mathbb R}\}$ and fix a time parameter $t>0$. For a continuous processes $X$ admitting finite quadratic variation $[X,X]$, Russo and Vallois~\cite{Russo-Vallois2,Russo-Vallois3} have introduced the following It\^o formula:
$$
F(X_t)=F(X_0)+\int_0^tF'(X_s)d^{-}X_s+\frac1{2}\int_0^tF''(X_s) d[X,X]_s
$$
for all $F\in C^2({\mathbb R})$, where
$$
\int_0^tF'(X_s)d^{-}X_s:={\rm ucp}\lim_{\varepsilon\downarrow 0}\frac1{\varepsilon}\int_0^tF'(X_s) \left(X_{s+\varepsilon}-X_s\right)ds
$$
is called the forward integral, where the notation ${\rm ucp}\lim$ denotes the uniform convergence in probability on each compact interval. We refer to Russo and Vallois~\cite{Russo-Vallois2,Russo-Vallois3} and the references therein for more details of stochastic calculus of continuous processes with finite quadratic variations. It follows from the previous section (the quadratic variation of $\{W_x,x\in {\mathbb R}\}$ is equal to $|x|$ for all $x\in {\mathbb R}$) that
\begin{equation}\label{sec4-1-eq4.1}
F(W_x)=F(W_0)+\int_{I_x}F'(W_y)d^{-}W_y+\frac1{2}\int_{I_x}F''(W_y)dy
\end{equation}
for all $F\in C^2({\mathbb R})$. Thys, by smooth approximating, we have that the next It\^o type formula.
\begin{theorem}\label{th4-1.1}
Let $f\in {\mathscr H}_t$ be left continuous. If $F$ is an absolutely continuous function with the derivative $F'=f$,
then the following It\^o type formula holds:
\begin{equation}\label{sec4-1-eq4.1-1}
F(W_x)=F(W_0)+\int_{I_x}f(W_y)d^{-}W_y +\frac1{2}[f(W),W]^{(SQ)}_x.
\end{equation}
\end{theorem}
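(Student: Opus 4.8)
The plan is to reduce the statement to the already available $C^2$ formula \eqref{sec4-1-eq4.1} by a smooth approximation. Let $\rho$ be a standard mollifier and set $f_n=f*\rho_{1/n}\in C^\infty(\mathbb R)$, and define $F_n(y)=F(0)+\int_{I_y}f_n(z)\,dz$, so that $F_n\in C^\infty(\mathbb R)\subset C^2(\mathbb R)$ with $F_n'=f_n$ and $F_n''=f_n'$. First I would establish the two approximation facts: (i) $f_n\to f$ in $\mathscr H_t=L^2(\mathbb R,\mu)$, and (ii) $F_n\to F$ uniformly on compact sets. For (i) one uses that $C_c(\mathbb R)$ is dense in $L^2(\mathbb R,\mu)$ (the measure $\mu$ being finite and Radon), that mollification of a continuous function converges uniformly on compacta, and that the density of $\mu$ is bounded above and below by positive constants on every compact set, so that on compacta the $\mathscr H_t$-norm is comparable to the plain $L^2(dz)$-norm while the contribution of the tails is made small by the $C_c$-approximation; the hypothesis that $f$ is left continuous is used only to fix the canonical representative of the class $f\in\mathscr H_t$ so that $f(W_y)$, the PQC and the forward integral below are all computed against the same function, which equals $F'$ a.e. Fact (ii) then follows from (i) via $|F_n(y)-F(y)|\le\int_{I_y}|f_n-f|\le C_{|y|}\|f_n-f\|_{\mathscr H_t}$ by Cauchy--Schwarz and the comparability of the two norms on $[-|y|,|y|]$.

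Next I would apply \eqref{sec4-1-eq4.1} to each $F_n\in C^2(\mathbb R)$ and then use Proposition~\ref{prop4.1} (valid because $f_n\in C^1(\mathbb R)$) to rewrite the last term as a PQC, namely $\int_{I_x}f_n'(W_y)\,dy=[f_n(W),W]^{(SQ)}_x$. This gives
$$
\int_{I_x}f_n(W_y)\,d^-W_y=F_n(W_x)-F_n(W_0)-\tfrac12[f_n(W),W]^{(SQ)}_x .
$$
Now I pass to the limit $n\to\infty$ on the right-hand side: by (ii) and the a.s.\ finiteness of $W_x$ and $W_0$ one has $F_n(W_x)\to F(W_x)$ and $F_n(W_0)\to F(W_0)$ almost surely, and by (i) together with Corollary~\ref{cor4-1.1} one has $[f_n(W),W]^{(SQ)}_x\to[f(W),W]^{(SQ)}_x$ in $L^2(\Omega)$. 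Hence $\int_{I_x}f_n(W_y)\,d^-W_y$ converges in probability, with limit $F(W_x)-F(W_0)-\tfrac12[f(W),W]^{(SQ)}_x$.

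It remains to identify this limit with the forward integral $\int_{I_x}f(W_y)\,d^-W_y=\lim_{\varepsilon\downarrow0}I_\varepsilon^{1,-}(f,x,t)$, where $I_\varepsilon^{1,-}(g,x,t)=\tfrac1\varepsilon\int_{I_x}g(W_y)(W_{y+\varepsilon}-W_y)\,dy$. The existence of this limit in $L^2(\Omega)$ for every $g\in\mathscr H_t$ is exactly statement (2) in the proof of Theorem~\ref{th3.1} (the Cauchy property of $I_\varepsilon^{1,-}$), and statement (1) there gives $E|I_\varepsilon^{1,-}(g,x,t)|^2\le C\|g\|_{\mathscr H_t}^2$ with $C$ independent of $\varepsilon$. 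Applying the latter to $g=f_n-f$ yields $\sup_{\varepsilon>0}E|I_\varepsilon^{1,-}(f_n,x,t)-I_\varepsilon^{1,-}(f,x,t)|^2\le C\|f_n-f\|_{\mathscr H_t}^2\to0$, so $I_\varepsilon^{1,-}(f_n,x,t)\to I_\varepsilon^{1,-}(f,x,t)$ in $L^2(\Omega)$ uniformly in $\varepsilon$; the Moore--Osgood interchange-of-limits lemma then gives
$$
\lim_{n}\int_{I_x}f_n(W_y)\,d^-W_y=\lim_{n}\lim_{\varepsilon\downarrow0}I_\varepsilon^{1,-}(f_n,x,t)=\lim_{\varepsilon\downarrow0}I_\varepsilon^{1,-}(f,x,t)=\int_{I_x}f(W_y)\,d^-W_y .
$$
Combining the last two displays yields \eqref{sec4-1-eq4.1-1}.

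The main obstacle is this final identification together with the weighted-norm bookkeeping: one must ensure the mollifiers converge to $f$ in the Gaussian-type space $\mathscr H_t$ and not merely in $L^2(dz)$ (hence the $C_c$-density step and the comparison of norms on compacta), and one must justify exchanging the $n$- and $\varepsilon$-limits, which rests squarely on the $\varepsilon$-uniform $L^2$ estimate of Theorem~\ref{th3.1}. A secondary point to verify is that each $F_n$ meets the growth restrictions implicit in \eqref{sec1-1-eq4.80011}/\eqref{sec4-1-eq4.1}: if $f$ obeys $|f(y)|\le Ce^{\beta y^2}$ with $0\le\beta<\frac{\sqrt\pi}{4\sqrt t}$ (the generic elements of $\mathscr H_t$), mollification preserves such a bound up to constants, keeping the $F_n$ admissible; for a general $f\in\mathscr H_t$ one first truncates and approximates. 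Finally, if one wants \eqref{sec4-1-eq4.1-1} with a genuine $\mathrm{ucp}$ forward integral, uniform in $x$ on compacts, an additional Kolmogorov-type continuity estimate in $x$ for $x\mapsto\int_{I_x}f(W_y)\,d^-W_y$ should be supplied, again using the bounds from Theorem~\ref{th3.1}.
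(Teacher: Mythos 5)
Your proof is correct and follows essentially the same route as the paper: mollify $f$, apply the $C^2$ forward-integral formula to the smooth $F_n$, and pass to the limit using the $L^2$-stability of the PQC from Corollary~\ref{cor4-1.1}. The only substantive differences are that the paper first localizes to uniformly bounded $f$ via $\Omega_k=\{\sup_x|W_x|<k\}$ where you instead use $C_c$-density in the finite measure $\mu$, and that where the paper disposes of the key final identification with the terse remark that ``the integral is closed in $L^2(\Omega)$,'' you supply the missing justification explicitly through the $\varepsilon$-uniform bound $E|I_\varepsilon^{1,-}(g,x,t)|^2\le C\|g\|_{{\mathscr H}_t}^2$ from the proof of Theorem~\ref{th3.1} and a Moore--Osgood interchange of limits, which is the more careful way to see that $\lim_n\int_{I_x}f_n(W_y)\,d^{-}W_y=\int_{I_x}f(W_y)\,d^{-}W_y$.
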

Clearly, this is an analogue of F\"ollmer-Protter-Shiryayev's formula. It is an improvement in terms of the hypothesis on $f$ and it is also quite interesting itself. Some details and more works could be found in Eisenbaum~\cite{Eisen1,Eisen2}, Feng--Zhao~\cite{Feng,Feng3}, F\"ollmer {\it et al}~\cite{Follmer}, Moret--Nualart~\cite{Moret}, Peskir~\cite{Peskir1}, Rogers--Walsh~\cite{Rogers2},
Russo--Vallois~\cite{Russo2,Russo-Vallois2,Russo-Vallois3},
Yan {\it et al}~\cite{Yan7,Yan2}, and the references therein. It is well-known that when $W$ is a semimartingale, the forward integral coincides with the It\^o integral. However, the following theorem points out that the two integrals are coincident for the process $W=\{W_x,x\in {\mathbb R}\}$. But, $W=\{W_x,x\in {\mathbb R}\}$ is not a semimartingale.
\begin{theorem}\label{th4-1.2}
Let $f$ be left continuous. If $F$ is an absolutely continuous function with the derivative $F'=f$ satisfying the condition
\begin{equation}\label{sec4-1-eq4.8011}
|F(y)|,|f(y)|\leq Ce^{\beta {y^2}},\quad y\in {\mathbb R}
\end{equation}
with $0\leq \beta<\frac{\sqrt{\pi}}{4\sqrt{t}}$, then the following It\^o type formula holds:
\begin{equation}\label{sec4-1-eq4.2}
F(W_x)=F(W_0)+\int_{I_x}f(W_y)\delta W_y +\frac1{2}[f(W_{\cdot}),W_{\cdot}]^{(SQ)}_x.
\end{equation}
\end{theorem}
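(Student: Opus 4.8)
The plan is to derive \eqref{sec4-1-eq4.2} from the forward It\^o formula \eqref{sec4-1-eq4.1-1} of Theorem~\ref{th4-1.1} by proving that, under the growth condition \eqref{sec4-1-eq4.8011}, the Skorohod integral $\int_{I_x}f(W_y)\delta W_y$ exists and coincides with the forward integral $\int_{I_x}f(W_y)d^{-}W_y$; then \eqref{sec4-1-eq4.2} follows at once from \eqref{sec4-1-eq4.1-1}. The exact role of the bound $0\le\beta<\frac{\sqrt{\pi}}{4\sqrt{t}}$ is Gaussian integrability: every $W_y$ is centered Gaussian with $E[W_y^2]=\sqrt{t/\pi}=:\sigma^2$, so $2\beta<\frac1{2\sigma^2}$ and hence $E[e^{2\beta W_y^2}]<\infty$, which gives $\sup_y E|f(W_y)|^2<\infty$ (and the same for $F$). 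The first step is therefore to verify that $f(W_\cdot)1_{I_x}\in{\mathbb D}^{t,1,2}({\mathcal H}_t)\subset{\rm Dom}(\delta^t)$, i.e. to bound $E\|f(W_\cdot)1_{I_x}\|^2_{{\mathcal H}_t}$ and $E\|D^t(f(W_\cdot)1_{I_x})\|^2_{{\mathcal H}_t\otimes{\mathcal H}_t}$. This is carried out exactly as in the proof of Theorem~\ref{th3.1}: by density one reduces to $f$ smooth with compact support, expands the ${\mathcal H}_t$-inner products through the covariances $R_{x,y}(t,t)$, and controls the double integrals over $I_x\times I_x$ by Lemmas~\ref{lem2.3}, \ref{lem2.5} and~\ref{lem2.7}.

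Next I would prove the coincidence of the two integrals, first for $f\in C^1({\mathbb R})$ satisfying \eqref{sec4-1-eq4.8011}. For such $f$, writing $F$ for a primitive, both the standard It\^o formula \eqref{sec1-1-eq4.80011} and the Russo--Vallois forward formula \eqref{sec4-1-eq4.1} hold and carry the same second-order term $\frac12\int_{I_x}F''(W_y)dy$; subtracting them gives $\int_{I_x}F'(W_y)d^{-}W_y=\int_{I_x}F'(W_y)\delta W_y$. (One may also argue directly: for smooth $f$, the duality \eqref{sec2-2-eq2.1} together with $D^tf(W_y)=f'(W_y)1_{I_y}$ yields
\[
\frac1\varepsilon\int_{I_x}f(W_y)(W_{y+\varepsilon}-W_y)\,dy=\delta^t\!\left(\frac1\varepsilon\int_{I_x}f(W_y)(1_{I_{y+\varepsilon}}-1_{I_y})\,dy\right)+\frac1\varepsilon\int_{I_x}f'(W_y)\langle 1_{I_y},1_{I_{y+\varepsilon}}-1_{I_y}\rangle_{{\mathcal H}_t}\,dy,
\]
and one lets $\varepsilon\downarrow0$, using the closedness of $\delta^t$ for the first term and the isometry $1_{I_z}\mapsto W_z$ together with the asymptotics of $E[(W_{y+\varepsilon}-W_y)^2]$ recorded in Section~\ref{sec2} for the second.)

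To pass to an arbitrary left-continuous $f$ obeying \eqref{sec4-1-eq4.8011}, I would approximate it by $f_n\in C^1({\mathbb R})$ with a uniform bound $|f_n(y)|\le Ce^{\beta y^2}$, with $f_n\to f$ in ${\mathscr H}_t$ and pointwise (using a one-sided mollifier so that left-continuity is respected at the jumps), and let $F_n$ be the corresponding primitives. By Corollary~\ref{cor4-1.1}, $[f_n(W),W]^{(SQ)}_x\to[f(W),W]^{(SQ)}_x$ in $L^2(\Omega)$; since $F_n\to F$ locally uniformly, so $F_n(W_x)\to F(W_x)$ and $F_n(W_0)\to F(W_0)$, Theorem~\ref{th4-1.1} applied to $f_n$ and to $f$ forces $\int_{I_x}f_n(W_y)d^{-}W_y\to\int_{I_x}f(W_y)d^{-}W_y$ in $L^2(\Omega)$. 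On the other hand, the moment estimates of the first step give $f_n(W_\cdot)1_{I_x}\to f(W_\cdot)1_{I_x}$ in ${\mathbb D}^{t,1,2}({\mathcal H}_t)$, whence $\int_{I_x}f_n(W_y)\delta W_y\to\int_{I_x}f(W_y)\delta W_y$ by continuity of $\delta^t$ on ${\mathbb D}^{t,1,2}({\mathcal H}_t)$. Letting $n\to\infty$ in the identity $\int_{I_x}f_n(W_y)d^{-}W_y=\int_{I_x}f_n(W_y)\delta W_y$ from the previous step yields the coincidence for $f$, and hence \eqref{sec4-1-eq4.2}.

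The main obstacle I expect is the Malliavin--Sobolev convergence $f_n(W_\cdot)1_{I_x}\to f(W_\cdot)1_{I_x}$ in ${\mathbb D}^{t,1,2}({\mathcal H}_t)$, that is, showing $E\|g(W_\cdot)1_{I_x}\|^2_{{\mathcal H}_t}\to0$ and $E\|D^t(g(W_\cdot)1_{I_x})\|^2_{{\mathcal H}_t\otimes{\mathcal H}_t}\to0$ for $g=f_n-f$. This is precisely where the strict inequality $\beta<\frac{\sqrt{\pi}}{4\sqrt{t}}$ is needed: it leaves a margin so that, after Cauchy--Schwarz and the polynomial weights $(\sqrt{t}+W_y^2)$ generated by differentiating the Gaussian densities (this is exactly the structure of $\|\cdot\|_{{\mathscr H}_t}$), the integrability $E[e^{2\beta W_y^2}]<\infty$ still survives; and again it is the off-diagonal covariance bounds of Lemmas~\ref{lem2.3}, \ref{lem2.5} and~\ref{lem2.7} that render the double integral over $I_x\times I_x$ finite, just as in the proofs of Theorem~\ref{th3.1} and Theorem~\ref{th4-1.1}.
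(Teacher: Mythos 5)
Your overall architecture (reduce to smooth $f$ by mollification, use Corollary~\ref{cor4-1.1} for the quadratic covariation term, and identify the limit of the Skorohod integrals) matches the paper's, but two of your steps fail as written. The first is the convergence $f_n(W_\cdot)1_{I_x}\to f(W_\cdot)1_{I_x}$ in ${\mathbb D}^{t,1,2}({\mathcal H}_t)$, which you single out as the main obstacle and propose to prove: it is in general false. Since $D^t\bigl(f_n(W_y)\bigr)=f_n'(W_y)1_{I_y}$ involves the derivative, for a merely left-continuous $f\in{\mathscr H}_t$ (say $f=1_{(-\infty,a]}$) the derivatives $f_n'(W_\cdot)$ do not converge in $L^2(\Omega;{\mathcal H}_t\otimes{\mathcal H}_t)$ and the limit $f(W_\cdot)1_{I_x}$ does not belong to ${\mathbb D}^{t,1,2}({\mathcal H}_t)$ at all. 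The paper never needs this: it only uses $f_n(W_\cdot)1_{I_x}\to f(W_\cdot)1_{I_x}$ in $L^2(\Omega;{\mathcal H}_t)$, deduces the $L^2(\Omega)$-convergence of $\delta^t\bigl(f_n(W_\cdot)1_{I_x}\bigr)$ from the It\^o identity itself (every other term converges), and then invokes the \emph{closedness} of the operator $\delta^t$ to identify the limit as $\int_{I_x}f(W_y)\delta W_y$. You should replace your continuity-on-${\mathbb D}^{1,2}$ argument by this closedness argument.

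The second problem is that you treat the $C^2$ formula \eqref{sec1-1-eq4.80011} as available, whereas proving it is the actual content of the paper's proof: because $E[W_y^2]=\sqrt{t/\pi}$ is constant in $y$, the process falls outside Al\'os--Mazet--Nualart, and the paper establishes \eqref{sec4-1-eq4.8} from scratch by a Taylor/Riemann-sum argument (Lemma~\ref{lem4-1.1}, the duality relation applied to each increment, and the estimate $\langle 1_{(0,x^n_{j-1}]},1_{(x^n_{j-1},x^n_j]}\rangle_{{\mathcal H}_t}\sim-\tfrac12(x^n_j-x^n_{j-1})$). Moreover, your parenthetical ``direct argument'' does not deliver the coincidence you need: the correction term there is $\frac1\varepsilon\int_{I_x}f'(W_y)\langle 1_{I_y},1_{I_{y+\varepsilon}}-1_{I_y}\rangle_{{\mathcal H}_t}\,dy$, and precisely because the variance is constant one has $\langle 1_{I_y},1_{I_{y+\varepsilon}}-1_{I_y}\rangle_{{\mathcal H}_t}=E[W_y(W_{y+\varepsilon}-W_y)]=-\tfrac12E[(W_{y+\varepsilon}-W_y)^2]\sim-\tfrac{\varepsilon}{2}$, so this term tends to $-\tfrac12\int_{I_x}f'(W_y)dy$ and does \emph{not} vanish. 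Carried out honestly, your computation yields $\int_{I_x}f(W_y)d^{-}W_y=\int_{I_x}f(W_y)\delta W_y-\tfrac12[f(W),W]^{(SQ)}_x$ rather than the identity \eqref{sec4-1-eq4.3} (a quick sanity check: for $f(y)=2y$ the Skorohod integral has zero mean while $E\int_{I_x}2W_y\,d^{-}W_y=-|x|$, so the two integrals cannot coincide). This discrepancy has to be confronted, not elided: it sits exactly at the joint between Theorem~\ref{th4-1.1} and Theorem~\ref{th4-1.2}, and it is also the point where the paper's own limit $J^n\to\int F''(W_y)dy$ (despite the prefactor $\tfrac12$ in $J^n$ and $\sum_j(\Delta_jW)^2\to|x|$) deserves scrutiny. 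As it stands, your sketch of the key step \eqref{sec4-1-eq4.3} is not a proof.
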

According to the two theorems above we get the next relationship:
\begin{equation}\label{sec4-1-eq4.3}
\int_{I_x}f(W_y)\delta W_y=\int_{I_x}f(W_y)d^{-}W_y,
\end{equation}
if $f$ satisfies the growth condition~\eqref{sec4-1-eq4.8011}.

\begin{proof}[Proof of Theorem~\ref{th4-1.1}]
If $f\in C^1({\mathbb R})$, then this is It\^o's formula since
$$
\left[f(W),W\right]^{(SQ)}_x=\int_{I_x}f'(W_y)dy.
$$
For $f\not\in C^1({\mathbb R})$, by a localization argument we may
assume that the function $f$ is uniformly bounded. In fact, for any
$k\geq 0$ we may consider the set
$$
\Omega_k=\left\{\sup_{x\in {\mathbb R}}|W_x|<k\right\}
$$
and let $f^{[k]}$ be a measurable function such that $f^{[k]}=f$ on
$[-k,k]$ and such that $f^{[k]}$ vanishes outside. Then $f^{[k]}$ is uniformly bounded and $f^{[k]}\in {\mathscr H}_t$ for every $k\geq 0$. Set $\frac{d}{dx}F^{[k]}=f^{[k]}$ and $F^{[k]}=F$ on $[-k,k]$. If the theorem is true for all uniformly bounded functions on ${\mathscr H}_t$, then we get the desired formula
$$
F^{[k]}(W_x)=F^{[k]}(W_0)+\int_{I_x}
f^{[k]}(W_y)d^{-}u_y+\frac12[f^{[k]}(W),W]^{(SQ)}_x
$$
on the set $\Omega_k$. Letting $k$ tend to infinity we deduce the It\^o formula~\eqref{sec4-1-eq4.1}.

Let now $F'=f\in {\mathscr H}_t$ be uniformly bounded and left
continuous. Consider the function $\zeta$ on ${\mathbb R}$ by
\begin{equation}
\zeta(x):=
\begin{cases}
ce^{\frac1{(x-1)^2-1}}, &{\text { $x\in (0,2)$}},\\
0, &{\text { otherwise}},
\end{cases}
\end{equation}
where $c$ is a normalizing constant such that $\int_{\mathbb
R}\zeta(x)dx=1$. Define the mollifiers
\begin{equation}\label{sec4-eq00-4}
\zeta_n(x):=n\zeta(nx),\qquad n=1,2,\ldots
\end{equation}
and the sequence of smooth functions
$$
F_n(x):=\int_{\mathbb R}F(x-{y})\zeta_n(y)dy,\quad x\in {\mathbb R}.
$$
Then $F_n\in C^\infty({\mathbb R})$ for all
$n\geq 1$ and the It\^{o} formula
\begin{equation}\label{sec3-eq3-Ito-1}
F_n(W_x)=F_n(W_0)+\int_{I_x}f_n(W_y)d^{-}W_y+
\frac12\int_0^tf'_n(W_y)dy
\end{equation}
holds for all $n\geq 1$, where $f_n=F_n'$. Moreover, by using Lebesgue's dominated convergence theorem, one can prove that as $n\to \infty$, for each $x$,
$$
F_n(x)\longrightarrow F(x),\quad f_n(x)\longrightarrow f(x),
$$
and $\{f_n\}\subset {\mathscr H}_t$, $f_n\to f$ in ${\mathscr H}_t$. It follows that
\begin{align*}
\frac12\int_0^tf'_n(W_y)dy=\left[f_n(W),W\right]^{(SQ)}_x
\longrightarrow \left[f(W),W\right]^{(SQ)}_x
\end{align*}
and
$$
f_n(W_x)\longrightarrow f(W_x)
$$
in $L^2(\Omega)$ by Corollary~\ref{cor4-1.1}, as $n$ tends to infinity. It
follows that
\begin{align*}
\int_{I_x}f_n(W_y)d^{-}W_y&=F_n(W_y)-F_n(W_0)-
\frac12[f_n(W),W]^{(SQ)}_x\\
&\longrightarrow F(W_y)-F_n(W_0)-
\frac12[f(W),W]^{(SQ)}_x
\end{align*}
in $L^2(\Omega)$, as $n$ tends to infinity. This completes the proof since the integral is closed in $L^2(\Omega)$.
\end{proof}

Now, similar to proof of Theorem~\ref{th4-1.1} one can introduce Theorem~\ref{th4-1.2}. But, we need to give the following standard It\^o type formula:
\begin{equation}\label{sec4-1-eq4.8}
F(W_x)=F(W_0)+\int_{I_x}F'(W_y)\delta W_y +\frac1{2}\int_{I_x}F''(W_y)dy
\end{equation}
for all $F\in C^2({\mathbb R})$ satisfying the condition
\begin{equation}
|F(y)|,|F'(y)|,|F''(y)|\leq Ce^{\beta {y^2}},\quad y\in {\mathbb R}
\end{equation}
with $0\leq \beta<\frac{\sqrt{\pi}}{4\sqrt{t}}$. It is important to note that one have given a standard It\^o formula for a large class of Gaussian processes in Al\'os {\em et al}~\cite{Nua1}. However, the process $x\mapsto u(\cdot,x)$ does not satisfy the condition in Al\'os {\em et al}~\cite{Nua1} since
\begin{align*}
E\left[u(t,x)^2\right]=\sqrt{\frac{t}{\pi}},\quad \frac{d}{dx}E\left[u(t,x)^2\right]=0
\end{align*}
for all $t\geq 0$ and $x\in {\mathbb R}$. So, we need to give the proof of the formula~\eqref{sec4-1-eq4.8} in order to prove Theorem~\ref{th4-1.2}.
\begin{lemma}\label{lem4-1.1}
Let $x\in {\mathbb R}$ and let $x^n_j=\frac{jx}{n}; j=0,1,\ldots,n$. Then we have
\begin{equation}
\sum_{j=1}^n\left(W_{x^n_j}-W_{x^n_{j-1}}\right)^2\longrightarrow |x|,
\end{equation}
in $L^2$, as $n$ tends to infinity.
\end{lemma}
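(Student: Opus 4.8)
The plan is to show that the dyadic quadratic sums converge to $|x|$ in $L^2(\Omega)$ by a direct second-moment computation, exactly in the spirit of the proof of Proposition~\ref{prop4.1} but now along a deterministic partition rather than along the continuous parameter $\varepsilon$. Write $\varepsilon_n = x/n$ (taking $x>0$ for definiteness; the case $x<0$ is symmetric) and set
\[
Q_n := \sum_{j=1}^n\left(W_{x^n_j}-W_{x^n_{j-1}}\right)^2.
\]
Since each increment $W_{x^n_j}-W_{x^n_{j-1}}$ is centered Gaussian, we have $E Q_n = \sum_{j=1}^n E\bigl[(W_{x^n_j}-W_{x^n_{j-1}})^2\bigr]$, and from the formula for $E[(W_{y+\varepsilon}-W_y)^2]$ recalled in Section~\ref{sec4}, namely $E[(W_{y+\varepsilon}-W_y)^2] = \varepsilon + \phi_{t,y}(\varepsilon)$ with $\phi_{t,y}(\varepsilon)\sim \frac{1}{2\sqrt{t\pi}}\varepsilon^2$ as $\varepsilon\to 0$, we get $E Q_n = n\varepsilon_n + n\,\phi_{t}(\varepsilon_n) = |x| + O(\varepsilon_n) \to |x|$. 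So the first step is simply to control the mean.

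The second and main step is to show $\operatorname{Var}(Q_n)\to 0$. Expanding,
\[
\operatorname{Var}(Q_n) = \sum_{j,k=1}^n \operatorname{Cov}\!\left((W_{x^n_j}-W_{x^n_{j-1}})^2,\ (W_{x^n_k}-W_{x^n_{k-1}})^2\right)
= 2\sum_{j,k=1}^n \left(E\bigl[(W_{x^n_j}-W_{x^n_{j-1}})(W_{x^n_k}-W_{x^n_{k-1}})\bigr]\right)^2,
\]
using the Gaussian identity for products of squares (the same identity invoked in the proof of Proposition~\ref{prop4.1}). The diagonal terms $j=k$ contribute $2\sum_j (E[(W_{x^n_j}-W_{x^n_{j-1}})^2])^2 \asymp 2 n \varepsilon_n^2 = 2x\varepsilon_n \to 0$. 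For the off-diagonal terms I would invoke Lemma~\ref{lem2.5}: for $j>k$ (so that the intervals $[x^n_{k-1},x^n_k]$ and $[x^n_{j-1},x^n_j]$ are ordered and separated by $(j-k-1)\varepsilon_n$), it gives
\[
\bigl|E[(W_{x^n_j}-W_{x^n_{j-1}})(W_{x^n_k}-W_{x^n_{k-1}})]\bigr| \le \frac{1}{4\sqrt{t\pi}}\,\varepsilon_n^2\, e^{-\frac{((j-k-1)\varepsilon_n)^2}{4t}}.
\]
Squaring and summing over the off-diagonal pairs gives a bound of order
\[
\varepsilon_n^4 \sum_{\ell\ge 1} n\, e^{-\frac{(\ell-1)^2\varepsilon_n^2}{2t}} \asymp \varepsilon_n^4 \cdot n \cdot \frac{1}{\varepsilon_n} = n\varepsilon_n^3 = x\varepsilon_n^2 \to 0,
\]
where I compared the sum over $\ell$ to the Gaussian integral $\int_0^\infty e^{-s^2\varepsilon_n^2/(2t)}\,ds \asymp \varepsilon_n^{-1}$. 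Combining the diagonal and off-diagonal estimates yields $\operatorname{Var}(Q_n) = O(\varepsilon_n)$.

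Putting the two steps together, $E|Q_n - |x||^2 = \operatorname{Var}(Q_n) + (E Q_n - |x|)^2 = O(\varepsilon_n) \to 0$, which is the claim. The only delicate point — the ``main obstacle'' — is bookkeeping the off-diagonal sum so that the exponential decay from Lemma~\ref{lem2.5} is exploited correctly: one must be careful that the covariance bound degrades to merely $\le \varepsilon_n$ (not $\varepsilon_n^2 e^{-\cdots}$) when the two subintervals are adjacent, i.e.\ $|j-k|\le 1$, so those near-diagonal terms should be folded into the diagonal estimate and handled separately; everything else is a routine geometric/Gaussian summation. No new tools beyond Lemma~\ref{lem2.5}, the variance formula for increments from Section~\ref{sec4}, and the Gaussian product-moment identity are needed.
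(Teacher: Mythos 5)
Your argument is correct and is essentially the proof the paper intends: the paper dispatches this lemma with ``similar to the proof of Proposition~\ref{prop4.1}, omitted,'' and your mean-plus-variance computation (Gaussian product-moment identity for the squares, diagonal and adjacent terms of order $n\varepsilon_n^2$, far off-diagonal terms controlled by the exponential decay in Lemma~\ref{lem2.5} and a Gaussian summation) is exactly the discrete-partition analogue of that argument. Your handling of the adjacent intervals $|j-k|\le 1$, where the hypothesis of Lemma~\ref{lem2.5} fails and one falls back on Cauchy--Schwarz, is the one point the paper glosses over, and you treat it correctly.
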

\begin{proof}
Similar to proof of Proposition~\ref{prop4.1} one can introduce the lemma, so we omit it.
\end{proof}
\begin{proof}[Proof of~\eqref{sec4-1-eq4.8}]
Let us fix $x\in {\mathbb R}$ and let $\pi\equiv \{x^n_j=\frac{jx}{n}; j=0,1,\ldots,n\}$ be a partition of $I_x$. Clearly, the growth condition~\eqref{sec4-1-eq4.2} implies that
\begin{equation}\label{sec4-1-eq4.8100}
E\left[\sup_{x\in {\mathbb R}}|G(W_x)|^p\right]\leq
c^pE\left[e^{p\beta\sup_{x\in {\mathbb R}}|W_x|}\right]<\infty
\end{equation}
for some constant $c>0$ and all $p<\frac{\sqrt{\pi}}{2\beta\sqrt{t}}$, where $G\in \{F,F',F''\}$. In particular, the estimate~\eqref{sec4-1-eq4.8100} holds for $p=2$. Using Taylor expansion, we have

\begin{equation}\label{sec3-eq3.3}
\begin{split}
F(W_x)&=F(W_0)+\sum^{n}_{j=1}
F^{'}(W_{x^n_{j-1}})(W_{x^n_{j}}-W_{x^n_{j-1}})\\
&\qquad+\frac{1}{2}\sum^{n}_{j=1}F^{''}
(W_{j}(\theta_j))(W_{x^n_{j}}-W_{x^n_{j-1}})^{2}\\
&\equiv F(W_0)+I^n +J^n
\end{split}
\end{equation}
where
$W_{j}(\theta_j)=W_{x^n_{j-1}}+\theta_j(W_{x^n_{j}}-W_{x^n_{j-1}})$
with $\theta_j\in (0,1)$ being a random variable. By~\eqref{sec2-eq2.1} we have
\begin{align*}
I^n&=\sum^n_{j=1}F^{'}(W_{x^n_{j-1}})(\delta^{t}(1_{(x^n_{j-1},
x^n_{j}]}))\\
&=\delta^{t}\left(\sum^n_{j=1}f^{'}(W_{x^n_{j-1}})1_{(x^n_{j-1},
x_{j}]}(\cdot)\right)
+\sum^n_{j=1}F^{''}(W_{x^n_{j-1}})\langle1_{(0,x^n_{j-1}]},
1_{(x^n_{j-1}, x^n_{j}]}\rangle_{\mathcal{H}_t}\\
& \equiv I^{n}_1+I^{n}_2.
\end{align*}
Now, in order to end the proof we claim that the following convergences in $L^2$ hold:
\begin{align}\label{sec4-1-eq4.11}
I^n_2\longrightarrow-\frac12\int^t_0F^{''}(W_y)dy,\\
I^{n}_1\longrightarrow\int_{I_x}F'(W_y)\delta W_y,\\
J^n\longrightarrow\int^t_0F^{''}(W_y)dy,
\end{align}
as $n$ tends to infinity.

To prove the first convergence, it is enough to establish that
\begin{align*}
\Lambda_n:=E\left|I^n_2+\frac12\sum_{j=1}^n F^{''}(W_{x^n_j})(x^n_j-x^n_{j-1})\right|^2\longrightarrow 0,
\end{align*}
as $n$ tends to infinity. By Minkowski inequality we have
\begin{align*}
\sqrt{\Lambda_n}&=\left(E\left|\sum^n_{j=1}F^{''}(W_{x^n_{j-1}}) \left\{\langle1_{(0,x^n_{j-1}]},
1_{(x^n_{j-1}, x^n_{j}]}\rangle_{\mathcal{H}_t}+\frac12 (x^n_j-x^n_{j-1})\right\}\right|^2\right)^{1/2}\\
&\leq C\sum^n_{j=1}\left|\langle1_{(0,x^n_{j-1}]},
1_{(x^n_{j-1}, x^n_{j}]}\rangle_{\mathcal{H}_t}+\frac12 (x^n_j-x^n_{j-1})\right|\\
&=C\sum^n_{j=1}\left|\frac1{2\sqrt{\pi}}\int_0^t \frac1{\sqrt{r}}\left(1-e^{-\frac1{4r}(x^n_j-x^n_{j-1})^2}\right)dr -\frac12 (x^n_j-x^n_{j-1})\right|\\
&=C\sum_{j=1}^n|x^n_j-x^n_{j-1}|
\left|\frac1{\sqrt{2\pi}} \int_{\frac{x^n_j-x^n_{j-1}}{\sqrt{2t}}}^{\infty} \frac1{s^2}\left(1-e^{-\frac{s^2}{2}}\right)ds-\frac12\right|\\
&=C\sum_{j=1}^n|x^n_j-x^n_{j-1}|
\left|\frac1{\sqrt{2\pi}} \int_0^{\frac{x^n_j-x^n_{j-1}}{\sqrt{2t}}} \frac1{s^2}\left(1-e^{-\frac{s^2}{2}}\right)ds\right|\\
&=C|x|\frac1{\sqrt{2\pi}} \int_0^{\frac{|x|}{n\sqrt{2t}}} \frac1{s^2}\left(1-e^{-\frac{s^2}{2}}\right)ds\longrightarrow 0,
\end{align*}
as $n$ tends to infinity by the fact
$$
\frac1{\sqrt{2\pi}} \int_0^\infty \frac1{s^2}\left(1-e^{-\frac{s^2}{2}}\right)ds=\frac1{\sqrt{2\pi}} \int_0^\infty e^{-\frac{s^2}{2}}ds=\frac12.
$$

Now, we prove the third convergence. We have
\begin{align*}
\Lambda_n(2):&=E\left|J^n-\int_{I_x} F^{''}(W_{y})dy\right|\\
&=E\left|\frac{1}{2}\sum^{n}_{j=1}F^{''}
(W_{j}(\theta_j))(W_{x^n_{j}}-W_{x^n_{j-1}})^{2}-\int_{I_x} F^{''}(W_{y})dy\right|.
\end{align*}
Suppose that $n\geq m$, and for any $j=1,\ldots,n$ let us denote by $x^{m(n)}_j$ the point of the $m$th partition that is closer to $x^n_j$ from the left. Then we obtain
\begin{align*}
\Lambda_n(2)&\leq \frac{1}{2}E\left|\sum^{n}_{j=1}\left(F^{''}
(W_{j}(\theta_j))-F^{''}(W_{x^{m(n)}_j})\right) (W_{x^n_{j}}-W_{x^n_{j-1}})^{2}\right|\\
&\qquad+\frac12E\left|\sum_{k=1}^mF^{''}(W_{x^{m(n)}_j})\sum_{
\{j:x^{m(n)}_{k-1}\leq x^{m(n)}_{j-1}<x^{m(n)}_{k}\}}
\left((W_{x_j^n}-W_{x^n_{j-1}})^2-(x_j^n-x^n_{j-1})\right)\right|\\
&\qquad+E\left|\sum_{k=1}^m\int_{x^n_{k-1}}^{x^n_k} \left(F^{''}(W_{x^{m(n)}_j})-F^{''}(W_y)\right)dy\right|\\
&\equiv \frac{1}{2}\Lambda_n(2,1)+\frac{1}{2}\Lambda_n(2,2)+\Lambda_n(2,3).
\end{align*}
Clearly, we have that $\Lambda_n(2,2)\to 0$ ($n,m\to \infty$) by Lemma~\ref{lem4-1.1} and the estimate~\eqref{sec4-1-eq4.8100},
\begin{align*}
\Lambda_n(2,3)&\leq |x|E\sup_{|z-y|\leq \frac{|x|}{m}}|F^{''}
(W_{z})-F^{''}(W_{y})|\longrightarrow 0\quad(m\to \infty)
\end{align*}
and

\begin{align*}
\Lambda_n(2,1)&=E\left|\sum^{n}_{j=1}\left(F^{''}
(W_{j}(\theta_j))-F^{''}(W_{x^{m(n)}_j})\right) (W_{x_{j}}-W_{x_{j-1}})^{2}\right|\\
&\leq CE\left\{
\sup_{|z-y|\leq \frac{|x|}{n}}|F^{''}
(W_{z})-F^{''}(W_{y})|
\sum^{n}_{j=1}(W_{x^n_{j}}-W_{x^n_{j-1}})^{2}\right\}\\
&\leq C\left\{E
\sup_{|z-y|\leq \frac{|x|}{n}}|F^{''}
(W_{z})-F^{''}(W_{y})|^2
E\left(\sum^{n}_{j=1}(W_{x^n_{j}}-W_{x^n_{j-1}})^{2}\right)^2 \right\}^{1/2}\\
&\leq C|x|\left\{E\sup_{|z-y|\leq \frac{|x|}{n}}|F^{''}
(W_{z})-F^{''}(W_{y})|^2\right\}^{1/2}\longrightarrow 0 \quad(n\to \infty)
\end{align*}
by~\eqref{sec2-eq2.2} and the estimate~\eqref{sec4-1-eq4.8100}. Thus, we obtain the third convergence, i.e., $J^n\to \int^t_0F^{''}(W_y)dy$ in $L^1$.

Finally, to end the proof we show that the second convergence:
$$
I^{n}_1=\delta^{t}\left(\sum^n_{j=1}F^{'}(W_{x^n_{j-1}}) 1_{(x^n_{j-1},
x^n_{j}]}(\cdot)\right)\longrightarrow\int_{I_x}F'(W_y)\delta W_y \quad (n\to \infty).
$$
We need to show that
$$
A_n:=\sum^n_{j=1}F^{'}(W_{x^n_{j-1}})1_{(x^n_{j-1},x^n_{j}]}(\cdot)
\longrightarrow F^{'}(W_{\cdot})1_{I_x}(\cdot)
$$
in $L^2(\Omega;{\mathcal H}_t)$, as $n$ tends to infinity. We have
\begin{align*}
E\|A_n&-F^{'}(W_{\cdot})1_{I_x}(\cdot)\|^2_{{\mathcal H}_t}\\
&=E\Bigl\|\sum^n_{j=1}\left(F^{'}(W_{x_{j-1}})-F^{'}(W_{\cdot}) \right) 1_{(x^n_{j-1},x^n_{j}]}(\cdot)
\Bigr\|^2_{|\mathcal{H}_t|}\\
&=E\sum^{n}_{j,l=1}\int^{x^n_{j}}_{x^n_{j-1}}
\int_{x^n_{l-1}}^{x^n_{l}}\left|F^{'}(W_{x^n_{j-1}})-F^{'}(W_y) \right|\left|F^{'}(W_{x^n_{j-1}})-F^{'}(W_z)\right|\phi(y,z)dydz\\
&\leq E\sup_{|y-z|\leq \frac{|x|}n}
\left|F^{'}(W_y)-F^{'}(W_z)\right|^2
\int_{I_x}\int_{I_x}\phi(y,z)dydz\\
&=E\sup_{|y-z|\leq \frac{|x|}n}
\left|F^{'}(W_y)-F^{'}(W_z)\right|^2
\int_{I_x}\int_{I_x}\frac1{2\sqrt{\pi t}}e^{-\frac{(y-z)^2}{4t}} dydz\\
&\leq E\sup_{|y-z|\leq \frac{|x|}n}
\left|F^{'}(W_y)-F^{'}(W_z)\right|^2
\int_{I_x}dz\int_{\mathbb R}\frac1{2\sqrt{\pi t}}e^{-\frac{(y-z)^2}{4t}}dy\\
&\leq  |x|E\sup_{|y-z|\leq \frac{|x|}n}
\left|F^{'}(W_y)-F^{'}(W_z)\right|^2\longrightarrow 0,
\end{align*}
as $n$ tends to infinity by the estimate~\eqref{sec4-1-eq4.8100}, which implies that
$$
A_n:=\sum^n_{j=1}F^{'}(W_{x^n_{j-1}})1_{(x^n_{j-1},x^n_{j}]}(\cdot)
\longrightarrow F^{'}(W_{\cdot})1_{I_x}(\cdot)
$$
in $L^2(\Omega;{\mathcal H}_t)$, as $n$ tends to infinity. The above steps prove that
\begin{align*}
I^n_1=F(W_x)-F(W_0)-I^n_2-J^n\longrightarrow F(W_x)-F(W_0)-\frac12\int_{I_x}F''(W_y)dy
\end{align*}
in $L^2(\Omega)$, as $n$ tends to infinity. This completes the proof since the integral $\int_0^\cdot u_s\delta W_s$ is closed in $L^2(\Omega)$.
\end{proof}

\section{The Bouleau-Yor identity of $\{u(\cdot,x),x\geq 0\}$}\label{sec4-2}
In this section, we consider the local time of the process $\{W_x=u(\cdot,x),x\geq 0\}$. Our main object is to prove that the integral
$$
\int_{\mathbb R}g(a){\mathscr L}^t(x,da)
$$
is well-defined and that the identity
\begin{equation}\label{sec4-2-eq0}
\int_{\mathbb R}g(a){\mathscr L}^t(x,da)=-[g(u(t,\cdot)),u(t,\cdot)]_x^{(SQ)}
\end{equation}
holds for all $g\in {\mathscr H}_t$ and $t>0$, where
$$
{\mathscr L}^t(x,a)=\int_{I_x}\delta(u(t,y)-a)dy.
$$
is the local time of $\{W_x=u(\cdot,x),x\geq 0\}$. The identity~\eqref{sec4-2-eq0} is called the Bouleau-Yor identity. More works for this can be found in Bouleau-Yor~\cite{Bouleau}, Eisenbaum~\cite{Eisen1,Eisen2}, F\"ollmer {\it
et al}~\cite{Follmer}, Feng--Zhao~\cite{Feng,Feng3},
Peskir~\cite{Peskir1}, Rogers--Walsh~\cite{Rogers2},
Yan {\it et al}~\cite{Yan7,Yan2}, and the references therein.

Recall that for any closed interval $I\subset {\mathbb R}_{+}$ and for any $a\in {\mathbb R}$, the local time $L(a,I)$ of $u$ is defined as the density of the occupation measure $\mu_I$ defined by
$$
\mu_I(A)=\int_I1_A(W_x)dx
$$
It can be shown (see Geman and Horowitz~\cite{Geman}, Theorem 6.4) that the following occupation density formula holds:
$$
\int_Ig(W_x,x)dx=\int_{\mathbb R}da\int_Ig(a,x)L(a,dx)
$$
for every Borel function $g(a,x)\geq 0$ on $I\times {\mathbb R}$. Thus, some estimates in Section~\ref{sec2} and Theorem 21.9 in Geman-Horowitz~\cite{Geman} together imply that the following result holds.

\begin{corollary}
The local time ${\mathscr L}^t(a,x):=L(a, [0,x])$ of $W=\{W_x=u(\cdot,x),x\geq 0\}$ exists and ${\mathscr L}^t\in L^2(\lambda\times P)$ for all $x\geq 0$ and $(a,x)\mapsto {\mathscr L}^t(a,x)$ is jointly continuous, where $\lambda$ denotes Lebesgue measure. Moreover, the occupation formula
\begin{equation}\label{sec4-2-eq1}
\int_0^t\psi(W_x,x)dx=\int_{\mathbb R}da\int_0^t\psi(a,x) {\mathscr L}^t(a,dx)
\end{equation}
holds for every continuous and bounded function $\psi(a,x):{\mathbb
R}\times {\mathbb R}_{+}\rightarrow {\mathbb R}$ and any $x\geq 0$.
\end{corollary}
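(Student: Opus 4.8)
The plan is to verify the hypotheses of Theorem~21.9 in Geman--Horowitz~\cite{Geman}, which yield at once the existence of the local time, the property ${\mathscr L}^t(\cdot,x)\in L^2(\lambda\times P)$, and the joint continuity of $(a,x)\mapsto{\mathscr L}^t(a,x)$; once existence is known, the occupation formula~\eqref{sec4-2-eq1} is nothing but the general occupation density formula (Theorem~6.4 of~\cite{Geman}, quoted above) applied to a bounded $\psi$. So everything reduces to two facts about the centred Gaussian process $W=\{W_x=u(\cdot,x),\,x\in{\mathbb R}\}$ (with the fixed time parameter $t$): that its sample paths are continuous on compact intervals, and that the laws of the pairs $(W_y,W_z)$, and more generally of the finite-dimensional vectors $(W_{y_1},\dots,W_{y_k})$, are nondegenerate in the quantitative way required by~\cite{Geman}. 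Both are consequences of the estimates of Section~\ref{sec2}.

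First I would record the path regularity: Lemma~\ref{lem2.1} gives $E[(W_y-W_z)^2]\le C|y-z|$, and since $W$ is Gaussian the equivalence of Gaussian moments upgrades this to $E[|W_y-W_z|^{2m}]\le C_m|y-z|^m$ for every $m\ge1$, so by Kolmogorov's criterion $W$ has a modification with locally $\gamma$-H\"older paths for each $\gamma<\tfrac12$ (the regularity already used tacitly in Proposition~\ref{prop4.1}). For the $L^2$-membership, fix $x>0$ and put $I_x=[0,x]$; for any subinterval $I\subset I_x$ the standard Gaussian identity
\[
\int_{\mathbb R}E\big[L(a,I)^2\big]\,da=\frac1{\sqrt{2\pi}}\int_I\int_I\frac{dy\,dz}{\sqrt{E[(W_y-W_z)^2]}}
\]
reduces the bound to controlling the right-hand integral. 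By~\eqref{sec2-eq2.10} of Lemma~\ref{lem2.7}, $E[(W_y-W_z)^2]=2\big(\sigma^2_{t,y}-\mu_{t,y,z}\big)\asymp\frac{|y-z|\sqrt t}{\sqrt t+|y-z|}\ge c_{t,x}\,|y-z|$ for $y,z\in I_x$ --- it is exactly the restriction to a bounded interval that turns the two-point variance into something comparable to $|y-z|$, with a constant depending on $t$ and $x$ --- so
\[
\int_{\mathbb R}E\big[{\mathscr L}^t(a,x)^2\big]\,da\le C_{t,x}\int_{I_x}\int_{I_x}\frac{dy\,dz}{\sqrt{|y-z|}}<\infty .
\]
The two-dimensional version that Theorem~21.9 needs for existence uses in the same way the determinant estimate $\sigma^2_{t,y}\sigma^2_{t,z}-\mu_{t,y,z}^2\asymp\frac{|y-z|\,t}{\sqrt t+|y-z|}$, which is~\eqref{sec2-eq2.9} of Lemma~\ref{lem2.7}.

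For the joint continuity I would estimate the moments of the increments of ${\mathscr L}^t$ in the variables $a$ and $x$ and apply Kolmogorov's criterion in $(a,x)$. Through the Fourier representation of the local time, the quantities $E[|{\mathscr L}^t(a,x)-{\mathscr L}^t(b,x)|^{2m}]$ and $E[|{\mathscr L}^t(a,x)-{\mathscr L}^t(a,x')|^{2m}]$ reduce to iterated Gaussian integrals whose integrands involve products of conditional variances $\operatorname{Var}\!\big(W_{y_i}\mid W_{y_1},\dots,W_{y_{i-1}}\big)^{-1/2}$, so the decisive input is a quantitative lower bound on these conditional variances, i.e.\ the local nondeterminism of $W$. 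Here one exploits that, for the fixed $t$, $x\mapsto W_x$ is a \emph{stationary} centred Gaussian process: from the covariance formula of Section~\ref{sec2} its spectral density is proportional to $(1-e^{-t\lambda^2})/\lambda^2$, comparable to $|\lambda|^{-2}$ as $|\lambda|\to\infty$, and a power-type spectral density of this form satisfies local nondeterminism on every bounded interval. Together with the determinant bound of Lemma~\ref{lem2.7} this yields H\"older estimates $E[|{\mathscr L}^t(a,x)-{\mathscr L}^t(b,x')|^{2m}]\le C_m\big(|a-b|+|x-x'|\big)^{m\gamma}$ for some $\gamma>0$ and all large $m$; Kolmogorov's theorem then produces the jointly continuous version, and~\eqref{sec4-2-eq1} follows immediately.

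The step I expect to be the main obstacle is this last one --- extracting the local nondeterminism of $W$ in the space variable, needed to control the multi-point conditional variances appearing in the higher moments of the local time, as opposed to the merely two-point nondegeneracy already contained in Lemma~\ref{lem2.7}. Once that property is in hand, matching the precise hypotheses of Theorem~21.9 of~\cite{Geman} and keeping track of how all constants depend on $t$ and on the bounded interval $I_x$ is routine.
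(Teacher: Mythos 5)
Your proposal is correct and follows essentially the same route as the paper, which proves this corollary simply by citing the occupation density formula together with Theorem~21.9 of Geman--Horowitz and the two-point variance estimates of Section~\ref{sec2} (in particular Lemma~\ref{lem2.1} and Lemma~\ref{lem2.7}). You in fact supply more than the paper does --- notably the observation that $x\mapsto W_x$ is stationary with spectral density proportional to $(1-e^{-t\lambda^2})/\lambda^2$, which yields the local nondeterminism needed for the joint-continuity part; the paper leaves that verification entirely implicit.
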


\begin{lemma}
For any $f_\triangle=\sum_jf_j1_{(a_{j-1},a_j]}\in {\mathscr E}$, we define
$$
\int_{\mathbb R}f_\triangle(y){\mathscr
L}^t(dy,x):=\sum_jf_j\left[{\mathscr L}^t(a_j,x)-{\mathscr
L}^{t}(a_{j-1},x)\right].
$$
Then the integral is well-defined and
\begin{equation}\label{sec4-2-eq2}
\int_{\mathbb R}f_{\Delta}(y)\mathscr{L}^t(dy,x)=
-\bigl[f_\triangle(W),W\bigr]^{(SQ)}_x
\end{equation}
almost surely, for all $x\geq 0$.
\end{lemma}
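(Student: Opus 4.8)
The plan is to reduce, by linearity, to the case of a single indicator $f_\triangle=1_{(a,b]}$, and then to obtain the identity by squeezing $[f_n(W),W]^{(SQ)}_x$ for a sequence of smooth $f_n\to 1_{(a,b]}$ between two limits: one computed via Proposition~\ref{prop4.1} and the occupation formula, the other via the continuity statement of Corollary~\ref{cor4-1.1}. First I would record that $f\mapsto[f(W),W]^{(SQ)}_x$ is linear in $f$ wherever it is defined, since the approximating functionals $I^1_\delta(\cdot,x,t)$ of the Definition are linear in their first argument; as each summand $f_j1_{(a_{j-1},a_j]}$ lies in $\mathscr H_t$ and its PQC exists by Theorem~\ref{th3.1}, one has $[f_\triangle(W),W]^{(SQ)}_x=\sum_j f_j[1_{(a_{j-1},a_j]}(W),W]^{(SQ)}_x$. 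The quantity $\sum_j f_j\{\mathscr L^t(a_j,x)-\mathscr L^t(a_{j-1},x)\}$ is likewise linear in $f_\triangle$, and a standard refinement argument (using only that $a\mapsto\mathscr L^t(a,x)$ is a well-defined function) shows it is independent of the chosen representation of the step function $f_\triangle$; this is the asserted well-posedness. Hence it is enough to prove $[1_{(a,b]}(W),W]^{(SQ)}_x=-\{\mathscr L^t(b,x)-\mathscr L^t(a,x)\}$ a.s., for $a<b$ and $x\ge 0$.

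Next I would fix $a<b$ and set $f_n=1_{(a,b]}*\zeta_n$ with $\zeta_n$ the mollifier of~\eqref{sec4-eq00-4}. Then $f_n\in C^\infty(\mathbb R)$, the $f_n$ are uniformly bounded with support in a fixed compact interval, $f_n\to 1_{(a,b]}$ in $L^2(\mathbb R,dz)$, and therefore (the density of $\mu$ being bounded on $\mathbb R$) also $f_n\to 1_{(a,b]}$ in $\mathscr H_t$. By Corollary~\ref{cor4-1.1} this gives $[f_n(W),W]^{(SQ)}_x\to[1_{(a,b]}(W),W]^{(SQ)}_x$ in $L^2(\Omega)$. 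On the other hand, since $f_n\in C^1(\mathbb R)$, Proposition~\ref{prop4.1} gives $[f_n(W),W]^{(SQ)}_x=\int_{I_x}f_n'(W_y)\,dy$ a.s., and the occupation density formula~\eqref{sec4-2-eq1} (applied with the continuous bounded integrand $f_n'$) rewrites this as $[f_n(W),W]^{(SQ)}_x=\int_{\mathbb R}f_n'(c)\,\mathscr L^t(c,x)\,dc$ a.s. A direct computation gives $f_n'=\zeta_n(\cdot-a)-\zeta_n(\cdot-b)$, and because $c\mapsto\mathscr L^t(c,x)$ is continuous with compact support (the paths of $W$ being a.s. bounded on $I_x$) and $\{\zeta_n\}$ is an approximate identity, the right-hand side converges, for a.e.\ $\omega$, to $\mathscr L^t(a,x)-\mathscr L^t(b,x)$ as $n\to\infty$. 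Thus $[f_n(W),W]^{(SQ)}_x$ converges in $L^2(\Omega)$ to $[1_{(a,b]}(W),W]^{(SQ)}_x$ and almost surely to $\mathscr L^t(a,x)-\mathscr L^t(b,x)$, so the two limits coincide a.s.; summing over the intervals of $f_\triangle$ then yields~\eqref{sec4-2-eq2}.

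The hard part will be the convergence $\int_{\mathbb R}f_n'(c)\,\mathscr L^t(c,x)\,dc\to\mathscr L^t(a,x)-\mathscr L^t(b,x)$: it relies on the \emph{joint continuity in $(a,x)$ of the local time} supplied by the preceding Corollary, which in turn rests on the Gaussian-field estimates of Section~\ref{sec2}; once that is available, everything else (linearity, $L^2$-density of mollifications in $\mathscr H_t=L^2(\mathbb R,\mu)$, and identifying a pathwise limit with an $L^2$-limit along a subsequence) is routine.
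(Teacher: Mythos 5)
Your proposal is correct and follows essentially the same route as the paper: mollify $1_{(a,b]}$ by $\zeta_n$, use Proposition~\ref{prop4.1} together with the occupation density formula to write $[f_n(W),W]^{(SQ)}_x=\int_{\mathbb R}f_n'(c)\,\mathscr L^t(c,x)\,dc$, pass to the limit using the joint continuity of the local time on one side and Corollary~\ref{cor4-1.1} (with an a.s.\ subsequence) on the other, and finish by linearity.
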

\begin{proof}
For the function $f_\triangle(y)=1_{(a,b]}(y)$ we define the sequence of smooth functions $f_n,\;n=1,2,\ldots$ by
\begin{align}
f_n(y)&=\int_{\mathbb
R}f_\triangle(y-z)\zeta_n(z)dz=\int_a^b\zeta_n(y-z)dz
\end{align}
for all $y\in \mathbb R$, where $\zeta_n,n\geq 1$ are the so-called mollifiers given in~\eqref{sec4-eq00-4}. Then $\{f_n\}\subset
C^{\infty}({\mathbb R})\cap {\mathscr H}_t$ and $f_n$ converges to $f_\triangle$ in ${\mathscr H}_t$, as $n$ tends to infinity. It follows from the occupation formula that
\begin{align*}
[f_n(W),W]^{(SQ)}_x& =\int_0^xf'_n(W_y)dy\\
&=\int_{\mathbb R}f_n'(y){\mathscr L}^t(y,x)dy=\int_{\mathbb
R}\left(\int_a^b\zeta_n'(y-z)dz\right){\mathscr L}^t(y,x)dy\\
&=-\int_{\mathbb R}{\mathscr
L}^t(y,x)\left(\zeta_n(y-b)-\zeta_n(y-a)\right)dy\\
&=\int_{\mathbb R}{\mathscr L}^{t}(y,x)\zeta_n(y-a)dy
-\int_{\mathbb R}{\mathscr L}^{t}(y,x)\zeta_n(y-b)dy\\
&\longrightarrow {\mathscr L}^{t}(a,x)-{\mathscr L}^{t}(b,x)
\end{align*}
almost surely, as $n\to \infty$, by the continuity of $y\mapsto
{\mathscr L}^{t}(y,x)$. On the other hand, we see also that there exists a subsequence $\{f_{n_k}\}$ such that
$$
[f_{n_k}(W),W]^{(SQ)}_x\longrightarrow [1_{(a,b]}(W),W]^{(SQ)}_x
$$
for all $x\geq 0$, almost surely, as $k\to \infty$ since $f_n$ converges to $f_\triangle$ in ${\mathscr H}_t$. It follows that
$$
[1_{(a,b]}(W),W]^{(SQ)}_x=\left({\mathscr L}^{t}(a,x)-{\mathscr L}^{t}(b,x)
\right)
$$
for all $x\geq 0$, almost surely. Thus, the identity
$$
\sum_jf_j[{\mathscr L}^{t}(a_j,x)-{\mathscr L}^{t}(a_{j-1},x)]=
-[f_\triangle(W),W]^{(SQ)}_x
$$
follows from the linearity property, and the lemma follows.
\end{proof}

As a direct consequence of the above lemma, for every $f\in {\mathscr H}_t$ if
$$
\lim_{n\to \infty}f_{\triangle,n}(y)=\lim_{n\to
\infty}g_{\triangle,n}(x)=f(y)
$$
in ${\mathscr H}_t$, where $\{f_{\triangle,n}\},\{g_{\triangle,n}\}\subset {\mathscr E}$, we then have that
\begin{align*}
\lim_{n\to \infty}\int_{\mathbb
R}&f_{\triangle,n}(y){\mathscr L}^{t}(y,x)dy
=-\lim_{n\to \infty}[f_{\triangle,n}(W),W]^{(SQ)}_x= -[f(W),W]^{(SQ)}_x\\
&=-\lim_{n\to \infty}[g_{\triangle,n}(W),W]^{(SQ)}_x=\lim_{n\to
\infty}\int_{\mathbb R}g_{\triangle,n}(y){\mathscr L}^{t}(y,x)dy
\end{align*}
in $L^2(\Omega)$. Thus, by the denseness of ${\mathscr
E}$ in ${\mathscr H}_t$ we can define
$$
\int_{\mathbb R}f(y){\mathscr L}^{t}(dy,x):=\lim_{n\to
\infty}\int_{\mathbb R}f_{\triangle,n}(y){\mathscr L}^{t}(dy,x)
$$
for any $f\in {\mathscr H}_t$, where $\{f_{\triangle,n}\}\subset
{\mathscr E}$ and
$$
\lim_{n\to \infty}f_{\triangle,n}=f
$$
in ${\mathscr H}_t$. These considerations are enough to prove the following theorem.
\begin{theorem}\label{th4-2.1}
For any $f\in {\mathscr H}_t$, the integral
$$
\int_{\mathbb R}f(y){\mathscr L}^{t}(dy,x)
$$
is well-defined in $L^2(\Omega)$ and the Bouleau-Yor type identity
\begin{equation}\label{sec4-2-eq4}
[f(W),W]^{(SQ)}_x=-\int_{\mathbb R}f(y)\mathscr{L}^{t}(dy,x)
\end{equation}
holds, almost surely, for all $x\geq 0$.
\end{theorem}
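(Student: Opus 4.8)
The plan is to obtain Theorem~\ref{th4-2.1} as a density argument that bootstraps from the elementary functions already handled in the preceding lemma. Recall that for every $f_\triangle\in{\mathscr E}$ the preceding lemma establishes both that $\int_{\mathbb R}f_\triangle(y){\mathscr L}^{t}(dy,x)$ is well-defined and that $\int_{\mathbb R}f_\triangle(y){\mathscr L}^{t}(dy,x)=-[f_\triangle(W),W]^{(SQ)}_x$ almost surely, for all $x\ge 0$. Since ${\mathscr H}_t=L^2({\mathbb R},\mu)$ with $\mu({\mathbb R})<\infty$, the set ${\mathscr E}$ is dense in ${\mathscr H}_t$, so given $f\in{\mathscr H}_t$ one can fix a sequence $\{f_{\triangle,n}\}\subset{\mathscr E}$ with $f_{\triangle,n}\to f$ in ${\mathscr H}_t$.

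First I would invoke Corollary~\ref{cor4-1.1}: because $f_{\triangle,n}\to f$ in ${\mathscr H}_t$, the partial quadratic covariations converge, $[f_{\triangle,n}(W),W]^{(SQ)}_x\to[f(W),W]^{(SQ)}_x$ in $L^2(\Omega)$ for each $x\ge 0$. Combined with the Bouleau--Yor identity for elementary functions, this forces the sequence $\int_{\mathbb R}f_{\triangle,n}(y){\mathscr L}^{t}(dy,x)=-[f_{\triangle,n}(W),W]^{(SQ)}_x$ to be Cauchy in $L^2(\Omega)$, with limit $-[f(W),W]^{(SQ)}_x$. One then \emph{defines} $\int_{\mathbb R}f(y){\mathscr L}^{t}(dy,x)$ to be this $L^2$-limit, after which the identity \eqref{sec4-2-eq4} is immediate from the construction.

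The one point that genuinely requires care — and the main (indeed essentially the only) obstacle — is that the $L^2$-limit must be shown independent of the approximating sequence, so that the integral is well-defined; note that $y\mapsto{\mathscr L}^{t}(y,x)$ need not have bounded variation, so a pathwise Stieltjes definition is unavailable and this $L^2$-construction is the natural substitute. To settle this I would take a second sequence $\{g_{\triangle,n}\}\subset{\mathscr E}$ with $g_{\triangle,n}\to f$ in ${\mathscr H}_t$, interleave it with $\{f_{\triangle,n}\}$ into a single sequence still converging to $f$ in ${\mathscr H}_t$, and apply Corollary~\ref{cor4-1.1} to the interleaved sequence; since the whole interleaved sequence of partial quadratic covariations converges to $[f(W),W]^{(SQ)}_x$ in $L^2(\Omega)$, the corresponding integrals $\int_{\mathbb R}f_{\triangle,n}(y){\mathscr L}^{t}(dy,x)$ and $\int_{\mathbb R}g_{\triangle,n}(y){\mathscr L}^{t}(dy,x)$ share the limit $-[f(W),W]^{(SQ)}_x$. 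This is exactly the reasoning sketched in the paragraph preceding the statement, so the proof amounts to assembling the preceding lemma with Corollary~\ref{cor4-1.1}; I expect no new estimates to be needed.
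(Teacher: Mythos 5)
Your proposal is correct and follows essentially the same route as the paper: approximate $f\in{\mathscr H}_t$ by elementary functions, transfer the identity from the preceding lemma through Corollary~\ref{cor4-1.1}, and define the integral as the resulting $L^2$-limit, with independence of the approximating sequence checked by comparing two sequences (your interleaving trick is just a restatement of the paper's two-sequence computation). No gaps; this matches the argument the paper gives in the paragraph preceding the theorem.
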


\begin{corollary}[Tanaka formula]
For any $a\in {\mathbb R}$ we have
\begin{align*}
(W_x-a)^{+}=(W_0-a)^{+}+\int_0^x{1}_{\{W_y>a\}} \delta W_y+\frac12{\mathscr L}^{t}(a,x),\\
(W_x-a)^{-}=(W_0-a)^{-}-\int_0^x{1}_{\{W_y<a\}}
\delta W_y+\frac12{\mathscr L}^{t}(a,x),\\
|W_x-a|=|W_0-a|+\int_0^x{\rm sign}(W_x-a)\delta W_y+{\mathscr L}^{t}(a,x).
\end{align*}
\end{corollary}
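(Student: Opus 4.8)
The plan is to obtain the three identities as a specialization of the It\^o formula of Theorem~\ref{th4-1.2} combined with the Bouleau--Yor identity of Theorem~\ref{th4-2.1}, applied to the three convex functions $F_1(y)=(y-a)^{+}$, $F_2(y)=(y-a)^{-}$ and $F_3(y)=|y-a|$. Each $F_i$ is globally Lipschitz, hence absolutely continuous, and its a.e.\ derivative, in the left-continuous version, equals $f_1:=1_{(a,\infty)}$, $f_2:=-1_{(-\infty,a]}$ and $f_3:=\mathrm{sign}(\cdot-a)=1_{(a,\infty)}-1_{(-\infty,a]}$, respectively. Each $f_i$ is bounded and left continuous; each $F_i$ grows at most linearly in $|y|$; hence, for any $\beta\in(0,\tfrac{\sqrt\pi}{4\sqrt t})$, both $F_i$ and $f_i$ satisfy the growth condition~\eqref{sec4-1-eq4.8011}, and $f_i\in{\mathscr H}_t$. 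Thus Theorem~\ref{th4-1.2} applies and yields, for $i=1,2,3$ and all $x\ge0$,
$$
F_i(W_x)=F_i(W_0)+\int_{I_x}f_i(W_y)\,\delta W_y+\tfrac12\,[f_i(W),W]^{(SQ)}_x ,
$$
while Theorem~\ref{th4-2.1} rewrites the last summand as $-\tfrac12\int_{\mathbb R}f_i(v)\,{\mathscr L}^{t}(dv,x)$.

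What is then left is to evaluate the Stieltjes-type integrals of the indicators against the local time. Since $W$ has a.s.\ continuous paths it is bounded on the compact interval $I_x$, so its occupation measure over $I_x$ is a.s.\ compactly supported; consequently $v\mapsto{\mathscr L}^{t}(v,x)$ has a.s.\ compact support and ${\mathscr L}^{t}(\pm\infty,x)=0$. Approximating $1_{(a,\infty)}$ in ${\mathscr H}_t$ by the elementary functions $1_{(a,n]}\in{\mathscr E}$, for which $\int_{\mathbb R}1_{(a,n]}(v)\,{\mathscr L}^{t}(dv,x)={\mathscr L}^{t}(n,x)-{\mathscr L}^{t}(a,x)$ by the very definition of the integral, and invoking the $L^2$-continuity of $g\mapsto\int_{\mathbb R}g(v)\,{\mathscr L}^{t}(dv,x)$ provided by Theorem~\ref{th4-2.1} together with the a.s.\ continuity of the local time, one gets
$$
\int_{\mathbb R}1_{(a,\infty)}(v)\,{\mathscr L}^{t}(dv,x)=-{\mathscr L}^{t}(a,x),\qquad
\int_{\mathbb R}1_{(-\infty,a]}(v)\,{\mathscr L}^{t}(dv,x)={\mathscr L}^{t}(a,x),
$$
the second by approximating with $1_{(-n,a]}$, whence $\int_{\mathbb R}\mathrm{sign}(v-a)\,{\mathscr L}^{t}(dv,x)=-2{\mathscr L}^{t}(a,x)$. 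Therefore $[f_1(W),W]^{(SQ)}_x=[f_2(W),W]^{(SQ)}_x={\mathscr L}^{t}(a,x)$ and $[f_3(W),W]^{(SQ)}_x=2{\mathscr L}^{t}(a,x)$, and substituting these into the three It\^o formulas above produces exactly the three asserted identities.

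I do not expect a genuine obstacle, since the corollary is essentially a routine specialization. The only step requiring a little care is the passage from elementary functions to the indicators $1_{(a,\infty)}$ and $1_{(-\infty,a]}$ in the definition of $\int_{\mathbb R}g(v)\,{\mathscr L}^{t}(dv,x)$, which is precisely where the compact support of the local time in the level variable enters. Two minor remarks: the discrepancy between the integrand $1_{\{W_y<a\}}$ in the statement and $1_{\{W_y\le a\}}=-f_2(W_y)$ produced above is harmless, because $\{y\in I_x:W_y=a\}$ is Lebesgue-null almost surely by the occupation density formula and a $dy$-almost-everywhere vanishing integrand contributes nothing to the Skorohod integral (the same remark settles the value of $\mathrm{sign}$ at $0$); and one may instead prove only the first identity in this way and deduce the other two algebraically from it, using $W_x-W_0=\int_{I_x}\delta W_y$ (the case $F(y)=y$ of Theorem~\ref{th4-1.2}), the pointwise relations $(W_x-a)^{-}=(W_x-a)^{+}-(W_x-a)$ and $|W_x-a|=(W_x-a)^{+}+(W_x-a)^{-}$, together with $1_{(a,\infty)}-1=-1_{(-\infty,a]}$ and $2\cdot1_{(a,\infty)}-1=\mathrm{sign}(\cdot-a)$ under the integral sign.
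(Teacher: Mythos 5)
Your proposal is correct and follows essentially the same route as the paper: apply the Skorohod--It\^o formula of Theorem~\ref{th4-1.2} to $(y-a)^{+}$ (and its companions) and identify $[1_{(a,\infty)}(W),W]^{(SQ)}_x$ with ${\mathscr L}^{t}(a,x)$ via the Bouleau--Yor identity, passing from elementary functions to the half-line indicator. Your write-up is in fact more careful than the paper's (which only treats the first identity explicitly and leaves the approximation $1_{(a,n]}\to 1_{(a,\infty)}$ and the $1_{\{W_y<a\}}$ versus $1_{\{W_y\le a\}}$ point implicit), but there is no substantive difference in approach.
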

\begin{proof}
Take $F(y)=(y-x)^{+}$. Then $F$ is absolutely continuous and
$$
F(x)=\int_{-\infty}^y1_{(x,\infty)}(y)dy.
$$
It follows from the identity~\eqref{sec4-2-eq2} and It\^o's formula~\eqref{th4-1.2} that
\begin{align*}
{\mathscr
L}^{t}(a,x)&=[1_{(a,+\infty)}(W),W]^{(SQ)}_x\\
&=2(W_x-a)^{+}-2(-a)^{+}-2 \int_0^x{1}_{\{W_y>a\}}\delta W_y
\end{align*}
for all $x\geq 0$, which gives the first identity. In the same
way one can obtain the second identity, and by subtracting the last identity from the previous one, we get the third identity.
\end{proof}

According to Theorem~\ref{th4-2.1}, we get an analogue of the It\^o formula (Bouleau-Yor type formula).
\begin{corollary}\label{cor4-2.3}
Let $f\in {\mathscr H}_t$ be a left continuous function with right limits. If $F$ is an absolutely continuous function with $F'=f$, then the following It\^o type formula holds:
\begin{equation}\label{sec4-2-eq5}
F(W_x)=F(W_0)+\int_0^xf(W_y)\delta W_y-\frac12\int_{\mathbb R}f(y){\mathscr L}^{t}(dy,x).
\end{equation}
\end{corollary}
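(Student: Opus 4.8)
The plan is to obtain~\eqref{sec4-2-eq5} directly by substituting the Bouleau--Yor identity of Theorem~\ref{th4-2.1} into the Skorohod-type It\^o formula of Theorem~\ref{th4-1.2}, and then to remove the exponential growth restriction of Theorem~\ref{th4-1.2} by the same localization argument used in the proof of Theorem~\ref{th4-1.1}.

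\textbf{Bounded case.} Assume first, in addition to the stated hypotheses, that $f$ is bounded, and write $F(y)=F(0)+\int_0^yf(s)\,ds$. Then $f$ is bounded and $F$ grows at most linearly, so both satisfy the growth condition~\eqref{sec4-1-eq4.8011} (with, say, $\beta=\tfrac{\sqrt{\pi}}{8\sqrt{t}}$), and Theorem~\ref{th4-1.2} applies. Using $I_x=[0,x]$ for $x\ge 0$ it gives
\begin{equation*}
F(W_x)=F(W_0)+\int_0^xf(W_y)\,\delta W_y+\frac12[f(W),W]^{(SQ)}_x.
\end{equation*}
Since $f$ is bounded and $\mu({\mathbb R})<\infty$ we have $f\in{\mathscr H}_t$, so Theorem~\ref{th4-2.1} gives $[f(W),W]^{(SQ)}_x=-\int_{\mathbb R}f(y)\,{\mathscr L}^t(dy,x)$; inserting this into the display proves~\eqref{sec4-2-eq5} whenever $f$ is bounded.

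\textbf{Localization.} For a general $f\in{\mathscr H}_t$ that is left continuous with right limits, note that such an $f$ is bounded on every compact interval. Fix $k\ge 1$ and set $f^{[k]}:=f\,1_{(-k,k]}$, which is bounded, left continuous, lies in ${\mathscr H}_t$, and satisfies $f^{[k]}\to f$ in ${\mathscr H}_t$ as $k\to\infty$ (dominated convergence, since $f\in L^2({\mathbb R},\mu)$ and $\mu$ is finite); its primitive $F^{[k]}(y):=F(0)+\int_0^yf^{[k]}(s)\,ds$ agrees with $F$ on $(-k,k)$. By the bounded case,
\begin{equation*}
F^{[k]}(W_x)=F^{[k]}(W_0)+\int_0^xf^{[k]}(W_y)\,\delta W_y-\frac12\int_{\mathbb R}f^{[k]}(y)\,{\mathscr L}^t(dy,x).
\end{equation*}
On $\Omega_k:=\{\sup_{y\in I_x}|W_y|<k\}$ we have $F^{[k]}(W_x)=F(W_x)$, $F^{[k]}(W_0)=F(W_0)$, and, by the localization of the divergence recalled at the end of Section~\ref{sec2} (with localizing sequence $(\Omega_k,\,f^{[k]}(W_{\cdot})1_{I_x})$ for $f(W_{\cdot})1_{I_x}$), also $\int_0^xf^{[k]}(W_y)\,\delta W_y=\int_0^xf(W_y)\,\delta W_y$. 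Moreover, by Theorem~\ref{th4-2.1} and Corollary~\ref{cor4-1.1},
\begin{equation*}
\int_{\mathbb R}f^{[k]}(y)\,{\mathscr L}^t(dy,x)=-[f^{[k]}(W),W]^{(SQ)}_x\longrightarrow-[f(W),W]^{(SQ)}_x=\int_{\mathbb R}f(y)\,{\mathscr L}^t(dy,x)
\end{equation*}
in $L^2(\Omega)$. Since $y\mapsto W_y$ is continuous, hence bounded on the compact $I_x$, we have $\Omega_k\uparrow\Omega$, and letting $k\to\infty$ in the displayed identity (the first three terms converging pathwise on each $\Omega_k$, the last in $L^2(\Omega)$) gives~\eqref{sec4-2-eq5}, almost surely, for all $x\ge 0$.

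\textbf{The main obstacle.} The substitution in the bounded case is immediate; the work lies in the localization, where the genuinely delicate point is the Skorohod-integral term: one must verify that $f^{[k]}(W_{\cdot})1_{I_x}$ really does localize $f(W_{\cdot})1_{I_x}$ in the sense required at the end of Section~\ref{sec2} (e.g. that the approximating elements belong to ${\mathbb D}^{t,1,2}$), which is carried out by first mollifying $f^{[k]}$ exactly as in the proof of formula~\eqref{sec4-1-eq4.8} and then passing to the limit. One must also keep track of the two modes of convergence as $k\to\infty$ --- pathwise, eventually constant on each $\Omega_k$, for the $F$-terms and the Skorohod integrals, versus $L^2(\Omega)$ for the local-time integrals --- noting that both imply convergence in probability, which suffices. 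Finally, the left-continuity-with-right-limits hypothesis on $f$ is precisely what makes each truncation $f^{[k]}$ an admissible input to Theorem~\ref{th4-1.2} in the bounded case.
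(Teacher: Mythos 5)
Your proposal is correct and follows the route the paper itself intends: Corollary~\ref{cor4-2.3} is obtained by substituting the Bouleau--Yor identity of Theorem~\ref{th4-2.1} into the Skorohod It\^o formula of Theorem~\ref{th4-1.2}, which is exactly your bounded-case step (the paper states this as ``According to Theorem~\ref{th4-2.1}, we get\ldots'' without writing out a proof). Your additional localization argument, truncating $f$ to $f\,1_{(-k,k]}$ to bridge the gap between the exponential growth hypothesis~\eqref{sec4-1-eq4.8011} of Theorem~\ref{th4-1.2} and the weaker assumption $f\in{\mathscr H}_t$ of the corollary, mirrors the localization used in the proof of Theorem~\ref{th4-1.1} and fills in a step the paper leaves implicit.
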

Recall that if $F$ is the difference of two convex functions, then
$F$ is an absolutely continuous function with derivative of bounded
variation. Thus, the It\^o-Tanaka formula
\begin{align*}
F(W_x)&=F(0)+\int_0^xF^{'}(W_y)\delta W_y+\frac12\int_{\mathbb R}{\mathscr L}^{t}(y,x)F''(dy)\\
&\equiv F(0)+\int_0^xF^{'}(W_y)\delta W_y-\frac12\int_{\mathbb R}F'(y){\mathscr L}^{t}(dy,x)
\end{align*}
holds.

\section{The quadratic covariation of process $\{u(t,\cdot),t\geq 0\}$}\label{sec6}
In this section, we study the existence of the
PQC $[f(u(\cdot,x)),u(\cdot,x)]^{(TQ)}$. Recall that
$$
I_\varepsilon^2(f,x,t)=\frac1{\sqrt{\varepsilon}}\int_0^t \left\{f(u(s+\varepsilon,x))-f(u(s,x))\right\}
(u(s+\varepsilon,x)-u(s,x))\frac{ds}{2\sqrt{s}}
$$
for $\varepsilon>0,t\geq 0$ and $x\in {\mathbb R}$, and
\begin{equation}\label{sec6-eq6.1}
[f(u(\cdot,x)),u(\cdot,x)]^{(TQ)}_t=\lim_{\varepsilon\downarrow
0}I_\varepsilon^2(f,x,t),
\end{equation}
provided the limit exists in probability. In this section, we study some analysis questions of the process $\{u(t,\cdot),t\geq 0\}$ associated with the quadratic covariation $[f(u(\cdot,x)),u(\cdot,x)]^{(TQ)}$, and the researches include the existence of the
PQC $[f(u(\cdot,x)),u(\cdot,x)]^{(TQ)}$, the It\^o and Tanaka formulas. Recall that
\begin{align*}
E\left[u(t,x)^2\right]=\sqrt{\frac{t}{\pi}}
\end{align*}
for all $t\geq 0$ and $x\in {\mathbb R}$. Denote
$$
B_t=u(t,\cdot),\quad t\in [0,T].
$$
It follows from Al\'os {\em et al}~\cite{Nua1} that the It\^o formula
\begin{equation}\label{sec6-eq6.2}
f(B_t)=f(0)+\int_0^tf'(B_s)\delta B_s+\frac1{2\sqrt{2}}\int_0^tf''(B_s)\frac{ds}{\sqrt{2\pi s}}
\end{equation}
for all $t\in [0,T]$ and $f\in C^2({\mathbb R})$ satisfying the condition
\begin{equation}\label{sec6-eq6.2000}
|f(x)|,|f'(x)|,|f''(x)|\leq Ce^{\beta {x^2}},\quad x\in {\mathbb R}
\end{equation}
with $0\leq \beta<\frac{\sqrt{\pi}}{4\sqrt{T}}$.
\begin{proposition}\label{lem6.1}
Let $f\in C^1({\mathbb R})$. We have
\begin{equation}\label{sec6-eq6.3}
[f(B),B]^{(TQ)}_t=\int_0^tf'(B_s)\frac{ds}{\sqrt{2\pi s}}
\end{equation}
and in particular, we have
$$
[B,B]^{(TQ)}_t =\sqrt{\frac{2t}{\pi}}
$$
for all $t\geq 0$.
\end{proposition}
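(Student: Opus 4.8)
The plan is to adapt the proof of Proposition~\ref{prop4.1} to the process $B=\{B_s=u(s,\cdot),\,s\ge0\}$, which by~\eqref{sec1-eq1.0} has local Hölder index $\tfrac14$ (it behaves locally like a fractional Brownian motion of Hurst parameter $\tfrac14$), and then to accommodate the singular weight $\tfrac{ds}{2\sqrt s}$ by a truncation near $s=0$. As in Proposition~\ref{prop4.1}, the Hölder continuity of $s\mapsto B_s$ together with the uniform continuity of $f'$ on compacts gives $f(B_{s+\varepsilon})-f(B_s)=f'(B_s)(B_{s+\varepsilon}-B_s)+R_\varepsilon(s)$ with $|R_\varepsilon(s)|\le\omega(\varepsilon)|B_{s+\varepsilon}-B_s|$ and $\omega(\varepsilon)\to0$ almost surely; since
\[
E\Big[\tfrac1{\sqrt\varepsilon}\int_0^t(B_{s+\varepsilon}-B_s)^2\tfrac{ds}{2\sqrt s}\Big]\le\tfrac1{\sqrt\varepsilon}\int_0^t\sqrt{\tfrac{2\varepsilon}\pi}\,\tfrac{ds}{2\sqrt s}=\sqrt{\tfrac{2t}\pi}
\]
is bounded uniformly in $\varepsilon$, the $R_\varepsilon$-term is negligible in probability, and (using the localization argument from the proof of Theorem~\ref{th4-1.1} to reduce to $f,f'$ bounded) it suffices to show that $\tfrac1{\sqrt\varepsilon}\int_0^t f'(B_s)(B_{s+\varepsilon}-B_s)^2\tfrac{ds}{2\sqrt s}$ converges in probability to $\int_0^t f'(B_s)\tfrac{ds}{\sqrt{2\pi s}}$.

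The heart of the proof is the $L^2$-estimate
\[
\Big\|\,\tfrac1{\sqrt\varepsilon}\int_0^t(B_{s+\varepsilon}-B_s)^2\,ds-\sqrt{\tfrac2\pi}\,t\,\Big\|_{L^2}^2=O(\varepsilon^{1/2})\qquad(\varepsilon\to0),
\]
which is hypothesis~\eqref{condition} of Lemma~\ref{Grad-Nourdin} for $g(y)=y^2$, $\gamma=\tfrac14$ and $V_t=\sqrt{\tfrac2\pi}\,t$. From~\eqref{sec1-eq1.0} one gets $E[(B_{s+\varepsilon}-B_s)^2]=\sqrt{2\varepsilon/\pi}-\rho_s(\varepsilon)$ with $0\le\rho_s(\varepsilon)\le\min\!\big(\sqrt{2\varepsilon/\pi},\,C\varepsilon^2 s^{-3/2}\big)$ (the bounds on $\rho_s$ coming from the concavity of $\sqrt{\,\cdot\,}$), so the mean of $\tfrac1{\sqrt\varepsilon}\int_0^t(B_{s+\varepsilon}-B_s)^2ds$ equals $\sqrt{\tfrac2\pi}\,t+O(\varepsilon)$. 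By the Wick formula for Gaussian vectors its variance is $\tfrac2\varepsilon\int_0^t\int_0^t\big(E[(B_{s+\varepsilon}-B_s)(B_{r+\varepsilon}-B_r)]\big)^2\,ds\,dr$; one bounds the covariance by $\sqrt{2\varepsilon/\pi}$ on $\{\min(s,r)\le\delta\}$ and on $\{|s-r|\le\varepsilon\}$, and off this set by Lemma~\ref{lem2.4}, which yields $C\varepsilon^{3/2}\big(\sqrt{(s+\varepsilon)s}\,|s-r|\big)^{-1}$, so that the three contributions are $O(\delta)$, $O(\varepsilon)$ and $O(\varepsilon/\delta)$; choosing $\delta=\sqrt\varepsilon$ gives $O(\sqrt\varepsilon)$. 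Thus Lemma~\ref{Grad-Nourdin} applies.

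It remains to put back the weight $\tfrac1{2\sqrt s}$. Fix $\eta>0$ and set $Y^\eta_s:=\tfrac{f'(B_s)}{2\sqrt{s\vee\eta}}$, a continuous process on $[0,\infty)$. Since $g=y^2\ge0$, Lemma~\ref{Grad-Nourdin} gives
\[
\tfrac1{\sqrt\varepsilon}\int_0^t f'(B_s)(B_{s+\varepsilon}-B_s)^2\,\tfrac{ds}{2\sqrt{s\vee\eta}}\ \longrightarrow\ \sqrt{\tfrac2\pi}\int_0^t\frac{f'(B_s)}{2\sqrt{s\vee\eta}}\,ds\qquad(\varepsilon\downarrow0)
\]
almost surely. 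The difference between the left-hand side and the same expression with the true weight $\tfrac1{2\sqrt s}$ is supported on $[0,\eta]$ and, because $f'$ is bounded and $\tfrac1{\sqrt\varepsilon}E[(B_{s+\varepsilon}-B_s)^2]\le\sqrt{2/\pi}$, it is $O(\sqrt\eta)$ in $L^1$ uniformly in $\varepsilon$; moreover $\sqrt{\tfrac2\pi}\int_0^t\tfrac{f'(B_s)}{2\sqrt{s\vee\eta}}ds\to\int_0^t f'(B_s)\tfrac{ds}{\sqrt{2\pi s}}$ as $\eta\downarrow0$ by dominated convergence. Letting $\varepsilon\downarrow0$ and then $\eta\downarrow0$ proves~\eqref{sec6-eq6.3}, and the case $f(y)=y$ gives $[B,B]^{(TQ)}_t=\int_0^t\tfrac{ds}{\sqrt{2\pi s}}=\sqrt{\tfrac{2t}\pi}$.

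The main obstacle is the $L^2$-estimate: since $B$ is non-stationary and only Hölder-$\tfrac14$ continuous, the double integral of the squared increment-covariance is not integrable near the origin under the obvious bound, and one must interpolate between the crude uniform bound $E[(B_{s+\varepsilon}-B_s)^2]\le\sqrt{2\varepsilon/\pi}$ (used near $s=0$ and near the diagonal $s=r$) and the off-diagonal decay furnished by Lemma~\ref{lem2.4}, carefully tuning the cut-off $\delta$ against $\varepsilon$. By contrast, the singularity of the weight $\tfrac1{2\sqrt s}$ at $0$ is a minor point, disposed of by the $s\vee\eta$ truncation.
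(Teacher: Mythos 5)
Your proof is correct and follows essentially the same route as the paper: both reduce the claim, via the Gradinaru--Nourdin criterion of Lemma~\ref{Grad-Nourdin} with $g(y)=y^2$, to an $L^2$ rate of the form $\bigl\|\varepsilon^{-1/2}\int_0^t(B_{s+\varepsilon}-B_s)^2(\cdots)-\sqrt{2t/\pi}\bigr\|_{L^2}^2=O(\varepsilon^\alpha)$, and both obtain it by the Gaussian product formula, splitting into the bias term $\phi_s(\varepsilon)\phi_r(\varepsilon)$ (handled by Taylor expansion of $\sqrt{\cdot\,}$) and the term $2\mu_{s,r}^2$ (handled by Lemma~\ref{lem2.4}). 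The only substantive difference is that the paper folds the singular weight $ds/(2\sqrt{s})$ into the functional $B^\varepsilon_t$ itself, whereas you keep Lebesgue measure in the hypothesis of Lemma~\ref{Grad-Nourdin} and put the truncated weight $1/(2\sqrt{s\vee\eta})$ into the continuous multiplier $Y$ before letting $\eta\downarrow 0$ --- a variant that conforms more literally to the lemma as stated --- and you spell out the region-splitting with cut-off $\delta=\sqrt{\varepsilon}$ for the covariance term, which the paper leaves implicit.
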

\begin{proof}
The proof is similar to Proposition~\ref{prop4.1}. It is enough to show that, for each $t\geq 0$
\begin{equation}\label{sec6-eq6.4}
\left\|B^\varepsilon_t-\sqrt{\frac{2t}{\pi}}
\right\|_{L^2}^2 =O(\varepsilon^\alpha)
\end{equation}
with some $\alpha>0$, as $\varepsilon\to 0$, by Lemma~\ref{Grad-Nourdin}, where
$$
B^\varepsilon_t=\frac1{\sqrt{\varepsilon}} \int_0^t(B_{s+\varepsilon}-B_s)^2d\sqrt{s}.
$$
We have
$$
E\left|B^\varepsilon_t-\sqrt{\frac{2t}{\pi}}\right|^2 =\frac{1}{\varepsilon}\int_0^t\int_0^t
A_\varepsilon(s,r)d\sqrt{s}d\sqrt{r}
$$
for $t\geq 0$ and $\varepsilon>0$, where
\begin{align*}
A_\varepsilon(s,r):&=E\left(
(B_{s+\varepsilon}-B_s)^2
-\sqrt{\frac{2\varepsilon}{\pi}}\right)\left(
(B_{r+\varepsilon}-B_r)^2
-\sqrt{\frac{2\varepsilon}{\pi}}\right)\\
&=E(B_{s+\varepsilon}-B_s)^2(B_{r+\varepsilon}-B_r)^2
+\frac{2\varepsilon}{\pi}\\
&\qquad-\sqrt{\frac{2\varepsilon}{\pi}}
E\left((B_{s+\varepsilon}-B_s)^2+(B_{r+\varepsilon}-B_r)^2\right).
\end{align*}
Defined the function $\phi_s:\;{\mathbb R}_{+}\to {\mathbb R}_{+}$ by
$$
\phi_s(x) =\frac1{\sqrt{2\pi}}\left(\sqrt{2(s+x)}
-2\sqrt{2s+x}+\sqrt{2s}\right)
$$
for every $s>0$. Then, we have
\begin{align*}
E\left[(B_{s+\varepsilon}-B_s)^2\right]&= \frac1{\sqrt{2\pi}}\left(\sqrt{2(s+\varepsilon)}
-2\sqrt{2s+\varepsilon}+2\sqrt{\varepsilon}+\sqrt{2s}\right) =\phi_s(\varepsilon)+\sqrt{\frac{2\varepsilon}{\pi}}
\end{align*}
for all $s>0$. Noting that
\begin{align*}
E[(&B_{s+\varepsilon}-B_s)^2(B_{r+\varepsilon}-B_r)^2]\\
&=E\left[(B_{s+\varepsilon}-B_s)^2\right]
E\left[(B_{r+\varepsilon}-B_r)^2\right] +2\left(E\left[(B_{s+\varepsilon}-B_s)
(B_{r+\varepsilon}-B_r)\right]\right)^{2}
\end{align*}
for all $r,s\geq 0$ and $\varepsilon>0$, we get
\begin{align*}
A_\varepsilon(s,r)&=\phi_s(\varepsilon)\phi_r(\varepsilon) +2(\mu_{s,r})^2
\end{align*}
where $\mu_{s,r}:=E\left[(B_{s+\varepsilon}-B_s)
(B_{r+\varepsilon}-B_r)\right]$. Now, let us estimate the function
$$
\phi_s(\varepsilon) =\frac1{\sqrt{2\pi}}\left(\sqrt{2(s+\varepsilon)}
-2\sqrt{2s+\varepsilon}+\sqrt{2s}\right).
$$
Clearly, one can see that
$$
\lim_{x\to 0}\frac{1-2\sqrt{1-x/2}+\sqrt{1-x}}{x^2}=-\frac1{16}
$$
and the continuity of the function $x\mapsto 1-2\sqrt{1-x/2}+\sqrt{1-x}$ implies that
\begin{align*}
|\phi_s(\varepsilon)|&= \sqrt{2(s+\varepsilon)}|1-2\sqrt{1-x/2}+\sqrt{1-x}|\leq C\frac{\varepsilon^2}{(s+\varepsilon)^{3/2}}\leq C\frac{\varepsilon^{\frac12+\beta}}{(s+\varepsilon)^\beta}
\end{align*}
with $x=\frac{\varepsilon}{s+\varepsilon}$ and $0<\beta<\frac12$, which gives
\begin{align*}
\frac{1}{\varepsilon}&\int_0^t\int_0^t  |\phi_s(\varepsilon)\phi_r(\varepsilon)|d\sqrt{s}d\sqrt{r}\leq Ct^{1-2\beta}\varepsilon^{2\beta}.
\end{align*}
It follows from Lemma~\ref{lem2.4} that there is a
constant $\alpha>0$ such that
\begin{align*}
\lim_{\varepsilon\downarrow 0}\frac{1}{
\varepsilon^{1+\alpha}}\int_0^t\int_0^tA_\varepsilon(s,r) d\sqrt{s}d\sqrt{r}=0
\end{align*}
for all $t>0$, which gives the desired estimate
$$
\left\|B^\varepsilon_t-\sqrt{\frac{2t}{\pi}} \right\|_{L^2}^2=O\left(
\varepsilon^\alpha\right)\qquad (\varepsilon\to 0)
$$
for each $t\geq 0$ and some $\alpha>0$. This completes the proof.
\end{proof}
Consider the decomposition
\begin{equation}\label{sec6-eq6.5}
\begin{split}
I_\varepsilon^2(f,x,t)&=\frac1{\sqrt{\varepsilon}}\int_0^t f(B_{s+\varepsilon}) (B_{s+\varepsilon}-B_s)\frac{ds}{2\sqrt{s}}\\
&\hspace{2cm}-\frac1{\sqrt{\varepsilon}}\int_0^t f(B_s)(B_{s+\varepsilon}-B_s)\frac{ds}{2\sqrt{s}}\\
&\equiv I_\varepsilon^{2,+}(f,x,t)-I_\varepsilon^{2,-}(f,x,t)
\end{split}
\end{equation}
for $\varepsilon>0$, and by estimating the two terms in the right hand side above in $L^2(\Omega)$ one can structure the next Banach space:
$$
{\mathscr H}_{\ast}=\{f\,:\,{\text { Borel functions on ${\mathbb R}$ such that $\|f\|_{{\mathscr H}_{\ast}}<\infty$}}\},
$$
where
\begin{align*}
\|f\|_{{\mathscr H}_{\ast}}^2:&=\frac{1}{\sqrt[4]{4\pi}}\int_0^T\int_{\mathbb
R}|f(z)|^2e^{-\frac{z^2\sqrt{\pi}}{2\sqrt{s}}} \frac{dzds}{s^{3/4}}\equiv \int_0^TE|f(B_s)|^2\frac{ds}{2\sqrt{s}}.
\end{align*}
Clearly, ${\mathscr H}_\ast=L^2({\mathbb R},\mu(dz))$ with
$$
\mu(dz)=\left(
\frac{1}{\sqrt[4]{4\pi}}\int_0^T e^{-\frac{z^2\sqrt{\pi}}{2\sqrt{s}}} \frac{ds}{s^{3/4}}\right)dz,
$$
and ${\mathscr H}_\ast$ includes all functions $f$ satisfying the condition
$$
|f(x)|\leq Ce^{\beta {x^2}},\quad x\in {\mathbb R}
$$
with $0\leq \beta<\frac{\sqrt{\pi}}{4\sqrt{T}}$. In a same way proving Theorem~\ref{th3.1} and by smooth approximation one can introduce the following result.
\begin{theorem}\label{th6.1}
The PQC $[f(B),B]^{(TQ)}$ exists and
\begin{align}\label{sec6-eq6.6}
E\left|[f(B),B]^{(TQ)}_t\right|^2\leq C \|f\|_{{\mathscr H}_{\ast}}^2
\end{align}
for all $f\in {\mathscr H}_{\ast}$ and $t\in [0,T]$. Moreover, if $F$ is an absolutely continuous function such that
$$
|F(x)|,|F'(x)|\leq Ce^{\beta {x^2}},\quad x\in {\mathbb R}
$$
with $0\leq \beta<\frac{\sqrt{\pi}}{4\sqrt{T}}$, then the following It\^o type formula holds:
\begin{equation}
F(B_t)=F(0)+\int_0^tF'(B_s)\delta B_s+\frac1{2\sqrt{2}}[F'(B),B]^{(TQ)}_t
\end{equation}
for all $t\in [0,T]$.
\end{theorem}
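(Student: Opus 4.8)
The plan is to mirror, essentially verbatim, the proofs of Theorems~\ref{th3.1}, \ref{th4-1.1} and~\ref{th4-1.2}, replacing the space process $W=\{W_x=u(\cdot,x)\}$ by the time process $B=\{B_t=u(t,\cdot),t\geq 0\}$: the interval $I_x$ becomes $[0,t]$, Lebesgue measure $dy$ becomes the weight $\frac{ds}{2\sqrt s}$, the divergence $\delta^{t}$ and the duality~\eqref{sec2-2-eq2.1} become $\delta^{\ast}$ and~\eqref{sec2-2-eq2.2}, the Gaussian estimates of Lemmas~\ref{lem2.3} and~\ref{lem2.7} are replaced by the second lemma of Section~\ref{sec2} and by Lemma~\ref{lem2.6}, and Lemma~\ref{lem2.4} supplies the sharp decay for disjoint time increments (in the place occupied by Lemma~\ref{lem2.5} in the spatial case). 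Starting from the decomposition~\eqref{sec6-eq6.5}, it suffices to prove (i) $E|I_\varepsilon^{2,\pm}(f,x,\cdot)|^2\leq C\|f\|_{{\mathscr H}_\ast}^2$ for each $\varepsilon>0$, and (ii) $\varepsilon\mapsto I_\varepsilon^{2,\pm}(f,x,t)$ is a Cauchy family in $L^2(\Omega)$ as $\varepsilon\downarrow 0$; by denseness of the elementary functions in ${\mathscr H}_\ast=L^2({\mathbb R},\mu(dz))$ one may assume throughout that $f\in C_b^\infty({\mathbb R})$ has compact support, which legitimises the integrations by parts below.

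For (i), write $E|I_\varepsilon^{2,-}(f,x,\cdot)|^2=\frac1{\varepsilon}\int_0^t\int_0^t\Phi_{s,r}(\varepsilon)\frac{ds}{2\sqrt s}\frac{dr}{2\sqrt r}$ with $\Phi_{s,r}(\varepsilon)=E[f(B_s)f(B_r)(B_{s+\varepsilon}-B_s)(B_{r+\varepsilon}-B_r)]$, and apply~\eqref{sec2-2-eq2.2} twice exactly as in~\eqref{sec7-eq7.9} to decompose $\Phi_{s,r}(\varepsilon)=\sum_{j=1}^5\Psi_j$, each $\Psi_j$ being a product of scalar covariances of the type $E[B_s(B_{r+\varepsilon}-B_r)]$, $E[B_r(B_{s+\varepsilon}-B_s)]$, $E[(B_{s+\varepsilon}-B_s)(B_{r+\varepsilon}-B_r)]$ with one of $E[f''(B_s)f(B_r)]$, $E[f'(B_s)f'(B_r)]$, $E[f(B_s)f''(B_r)]$, $E[f(B_s)f(B_r)]$. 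The $j=5$ term is handled as in Section~\ref{sec4}: on $\{|s-r|\leq\varepsilon\}$ one uses $|E[(B_{s+\varepsilon}-B_s)(B_{r+\varepsilon}-B_r)]|\leq\sqrt{2\varepsilon/\pi}$ (Cauchy--Schwarz together with $E(B_{s+\varepsilon}-B_s)^2\leq\sqrt{2\varepsilon/\pi}$, which follows from the concavity of $\sqrt{\cdot}$), while on $\{|s-r|>\varepsilon\}$ one uses the decay of Lemma~\ref{lem2.4}; in both ranges the contribution is $\leq C\int_0^t E|f(B_s)|^2\frac{ds}{2\sqrt s}=C\|f\|_{{\mathscr H}_\ast}^2$, the singularities being absorbed by $\int_0^T s^{-3/4}\,ds<\infty$. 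For $j\leq 4$ one first proves the two-dimensional Gaussian estimates that play the role of~\eqref{sec3-eq3.999} and~\eqref{sec3-eq3.1000}, by integrating by parts the density of $(B_s,B_r)$ and using $\rho^2:=\sigma^2_{s,x}\sigma^2_{r,x}-\mu^2_{s,r,x}\asymp\sqrt{(s\wedge r)\,|s-r|}$ (Lemma~\ref{lem2.6}) together with $0\leq\sigma^2_{r,x}-\mu_{s,r,x}\leq C\sqrt{|s-r|}$ (the second lemma of Section~\ref{sec2}); the only structural change from Section~\ref{sec4} is that the constant variance $\sqrt{t/\pi}$ is everywhere replaced by the running variances $\sigma^2_{s,x}=\sqrt{s/\pi}$ and $\sigma^2_{r,x}=\sqrt{r/\pi}$. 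Combining these with $|E[B_s(B_{r+\varepsilon}-B_r)]|\leq C\sqrt\varepsilon$ and the covariance estimates of Lemmas~\ref{lem2.1} and~\ref{lem2.4}, and integrating against $\frac{ds}{2\sqrt s}\frac{dr}{2\sqrt r}$, gives $\sum_{j=1}^4\Lambda_j\leq C\|f\|_{{\mathscr H}_\ast}^2$, which proves (i) and the inequality~\eqref{sec6-eq6.6}. For (ii) one repeats Steps~I--III of the proof of Theorem~\ref{th3.1}: with $\varepsilon_1>\varepsilon_2$ one decomposes $\varepsilon_j\Phi_{s,r}(1,\varepsilon_i)-\varepsilon_i\Phi_{s,r}(2,\varepsilon_1,\varepsilon_2)$ into the differences $A_{s,r}(\cdot,\varepsilon_i,j)$ and Taylor-expands the scalar profile $h(z)=e^{-z^2/2}-z\int_z^\infty e^{-w^2/2}\,dw$ about $z=0$; every term is then $O\!\bigl((\varepsilon_1-\varepsilon_2)+o(\varepsilon_1\varepsilon_2)/(\varepsilon_1\varepsilon_2)+o(\varepsilon_1^2)/\varepsilon_1^2\bigr)\|f\|_{{\mathscr H}_\ast}^2\to 0$. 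Hence $[f(B),B]^{(TQ)}$ exists in $L^2(\Omega)$, and a corollary analogous to Corollary~\ref{cor4-1.1} yields the continuity of $f\mapsto[f(B),B]^{(TQ)}$ on ${\mathscr H}_\ast$.

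For the It\^o formula, note that $t\mapsto E[u(t,x)^2]=\sqrt{t/\pi}$ is strictly increasing, so $B$ does satisfy the hypotheses of Al\'os {\em et al}~\cite{Nua1}; hence~\eqref{sec6-eq6.2} holds for every $F\in C^2({\mathbb R})$ obeying~\eqref{sec6-eq6.2000}, and by Proposition~\ref{lem6.1} it may be rewritten there as $F(B_t)=F(0)+\int_0^tF'(B_s)\delta B_s+\frac1{2\sqrt2}[F'(B),B]^{(TQ)}_t$. For a general absolutely continuous $F$ with $F'=f$ satisfying the stated growth, set $F_n(x)=\int_{\mathbb R}F(x-y)\zeta_n(y)\,dy$ with $\zeta_n$ the mollifiers of~\eqref{sec4-eq00-4}; then $F_n\in C^\infty({\mathbb R})$, $f_n=F_n'\to f$ pointwise and in ${\mathscr H}_\ast$ by dominated convergence, and each $F_n$ still satisfies an exponential bound with exponent $<\frac{\sqrt\pi}{4\sqrt T}$, so~\eqref{sec6-eq6.2} applies to $F_n$. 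Letting $n\to\infty$: $\frac1{2\sqrt2}[f_n(B),B]^{(TQ)}_t\to\frac1{2\sqrt2}[f(B),B]^{(TQ)}_t$ in $L^2(\Omega)$ by the continuity just established, and $F_n(B_t)\to F(B_t)$, $F_n(0)\to F(0)$ in $L^2(\Omega)$ via the Gaussian moment bound $E[e^{p\beta\sup_{s\leq t}|B_s|}]<\infty$ (Borell's inequality for the Gaussian process $B$), exactly as in~\eqref{sec4-1-eq4.8100}. Consequently $\int_0^tf_n(B_s)\delta B_s=F_n(B_t)-F_n(0)-\frac1{2\sqrt2}[f_n(B),B]^{(TQ)}_t$ converges in $L^2(\Omega)$, and since $\delta^{\ast}$ is closed its limit is $\int_0^tf(B_s)\delta B_s$, which is the asserted formula.

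The main obstacle, and the single place where the argument genuinely departs from the spatial case, is the degeneracy of the law of $(B_s,B_r)$ near the origin: whereas in Section~\ref{sec4} the variance $\sigma^2_{t,y}=\sqrt{t/\pi}$ was constant in the integration variable, here $\sigma^2_{s,x}=\sqrt{s/\pi}\to 0$ as $s\to 0$, so the Gaussian density of $(B_s,B_r)$ blows up and the two-dimensional estimates above must be carried out keeping explicit track of the $s$-dependence, with $\sqrt[4]{s}$ in place of $\sqrt[4]{t}$. One must then verify that, after pulling a factor $\sqrt\varepsilon$ (or better) out of each scalar covariance and dividing by $\varepsilon$, the residual singularities $s^{-1/4}$ and $|s-r|^{-1/2}$ are integrable against the weight $\frac{ds}{2\sqrt s}\frac{dr}{2\sqrt r}$ on $[0,T]^2$ --- in particular that $\int_0^T s^{-3/4}\,ds<\infty$, which is exactly why the norm $\|\cdot\|_{{\mathscr H}_\ast}$ carries the factor $s^{-3/4}$. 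Checking that every term of $\sum_{j=1}^5\Lambda_j$, and of the Cauchy estimate, is dominated by $\|f\|_{{\mathscr H}_\ast}^2$ rather than by a strictly larger norm is the delicate bookkeeping the proof requires.
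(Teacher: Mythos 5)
Your proposal follows exactly the route the paper intends: the paper gives no separate proof of Theorem~\ref{th6.1}, stating only that it is obtained ``in a same way'' as Theorem~\ref{th3.1} together with smooth approximation and the Al\'os--Mazet--Nualart formula~\eqref{sec6-eq6.2}, which applies here because $t\mapsto E[u(t,x)^2]=\sqrt{t/\pi}$ is increasing. Your elaboration --- the five-term decomposition via the duality~\eqref{sec2-2-eq2.2}, the substitution of Lemmas~\ref{lem2.4} and~\ref{lem2.6} for their spatial counterparts, the mollification step, and the explicit tracking of the degenerating variance $\sigma^2_{s,x}=\sqrt{s/\pi}$ against the weight $\frac{ds}{2\sqrt{s}}$ --- is a correct and faithful filling-in of that outline.
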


Recall that Russo and Tudor~\cite{Russo-Tudor} has showed that
$B=\{B_t=u(t,\cdot),t\geq 0\}$ admits a local time $L(t,a)\in L^2(\lambda\times P)$ such that $(a,t)\mapsto L(a,t)$ is jointly continuous, where $\lambda$ denotes Lebesgue measure, since $B=\{B_t=u(t,\cdot),t\geq 0\}$ is a bi-fractional Brownian motion for every $x\in {\mathbb R}$. Define the weighted local time ${\mathscr L}$ of $B=\{B_t=u(t,\cdot),t\geq 0\}$ by
\begin{align*}
{\mathscr L}(x,t)&=\int_0^t\frac{1}{2\sqrt{\pi s}}d_sL(s,x)\equiv \int_0^t\delta(B_s-x)\frac{ds}{2\sqrt{\pi s}}
\end{align*}
for $t\geq 0$ and $x\in {\mathbb R}$, where $\delta$ is the Dirac delta function. Then, the occupation formula
\begin{equation}\label{sec6-eq6.8}
\int_0^t\psi(B_s,s)\frac{ds}{2\sqrt{\pi s}}=\int_{\mathbb R}da\int_0^t\psi(a,s){\mathscr L}(a,ds)
\end{equation}
holds for every continuous and bounded function $\psi:{\mathbb
R}\times {\mathbb R}_{+}\rightarrow {\mathbb R}$ and any $x\geq 0$. As in Section~\ref{sec4-2}, we can show that the integral
$$
\int_{\mathbb R}f_\triangle(x){\mathscr
L}(dx,t):=\sum_jf_j\left[{\mathscr L}(a_j,t)-{\mathscr
L}(a_{j-1},t)\right].
$$
is well-defined and
\begin{equation}\label{sec6-eq6.9}
\int_{\mathbb R}f_{\Delta}(x)\mathscr{L}(dx,t)=
-\frac1{\sqrt{2}}[f_\triangle(B),B]^{(TQ)}_t
\end{equation}
almost surely, for all $f_\triangle=\sum_jf_j1_{(a_{j-1},a_j]}\in {\mathscr E}$. By the denseness of ${\mathscr
E}$ in ${\mathscr H}_{\ast}$ one can define
$$
\int_{\mathbb R}f(x){\mathscr L}(dx,t):=\lim_{n\to
\infty}\int_{\mathbb R}f_{\triangle,n}(x){\mathscr L}(dx,t)
$$
for any $f\in {\mathscr H}_{\ast}$, where $\{f_{\triangle,n}\}\subset {\mathscr E}$ and
$$
\lim_{n\to \infty}f_{\triangle,n}=f
$$
in ${\mathscr H}$. Moreover, the Bouleau-Yor type formula
\begin{equation}\label{sec6-eq6.10}
[f(B),B]^{(TQ)}_t=-\sqrt{2}\int_{\mathbb R}f(x)\mathscr{L}(dx,t)
\end{equation}
holds, almost surely, for all $f\in {\mathscr H}_{\ast}$.
\begin{corollary}[Tanaka formula]
For any $x\in {\mathbb R}$ we have
\begin{align*}
|B_t-x|=|x|+\int_0^t{\rm sign}(B_s-x)\delta B_s+{\mathscr L}(x,t).
\end{align*}
\end{corollary}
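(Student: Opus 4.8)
The plan is to mirror the proof of the Tanaka formula in Section~\ref{sec4-2}, with the space process $W$ replaced by the time process $B=\{B_t=u(t,\cdot),t\geq 0\}$, the Bouleau--Yor identity~\eqref{sec4-2-eq4} replaced by~\eqref{sec6-eq6.10}, and the It\^o formula of Section~\ref{sec4-2} replaced by the one in Theorem~\ref{th6.1}. First I would fix $x\in{\mathbb R}$ and take $F(y)=|y-x|$. This $F$ is absolutely continuous with $F'(y)={\rm sign}(y-x)$, which is bounded (hence satisfies the growth bound with $\beta=0$, so $F'\in{\mathscr H}_{\ast}$) and left continuous, while $F$ itself grows at most linearly; thus the hypotheses of Theorem~\ref{th6.1} are met. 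Combining the It\^o formula of Theorem~\ref{th6.1} with the Bouleau--Yor formula~\eqref{sec6-eq6.10} — the latter turning $\frac1{2\sqrt2}[F'(B),B]^{(TQ)}_t$ into $-\frac12\int_{\mathbb R}F'(y){\mathscr L}(dy,t)$, since $\frac1{2\sqrt2}\cdot(-\sqrt2)=-\frac12$ — and using $F(0)=|x|$, I obtain
$$
|B_t-x|=|x|+\int_0^t{\rm sign}(B_s-x)\delta B_s-\frac12\int_{\mathbb R}{\rm sign}(y-x){\mathscr L}(dy,t).
$$

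It then remains to check that $-\frac12\int_{\mathbb R}{\rm sign}(y-x){\mathscr L}(dy,t)={\mathscr L}(x,t)$. Writing ${\rm sign}(y-x)=1_{(x,\infty)}(y)-1_{(-\infty,x]}(y)$ and recalling that the measure ${\mathscr L}(dy,t)$ is defined through the telescoping sums $\sum_jf_j[{\mathscr L}(a_j,t)-{\mathscr L}(a_{j-1},t)]$, I would use that $(a,t)\mapsto L(a,t)$ is jointly continuous (Russo--Tudor~\cite{Russo-Tudor}), so $y\mapsto{\mathscr L}(y,t)$ is continuous with compact support contained in the closed range of $\{B_s:0\le s\le t\}$, whence ${\mathscr L}(\pm\infty,t)=0$. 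This gives $\int_{(x,\infty)}{\mathscr L}(dy,t)=-{\mathscr L}(x,t)$ and $\int_{(-\infty,x]}{\mathscr L}(dy,t)={\mathscr L}(x,t)$, so $\int_{\mathbb R}{\rm sign}(y-x){\mathscr L}(dy,t)=-2{\mathscr L}(x,t)$ and the identity follows. Equivalently, as in Section~\ref{sec4-2}, I could first take $F(y)=(y-x)^{+}$ and $F(y)=(y-x)^{-}$ (whose derivatives $1_{(x,\infty)}$ and $-1_{(-\infty,x)}$ are bounded, hence in ${\mathscr H}_{\ast}$) to get the two one-sided Tanaka formulas $(B_t-x)^{+}=(-x)^{+}+\int_0^t1_{\{B_s>x\}}\delta B_s+\frac12{\mathscr L}(x,t)$ and $(B_t-x)^{-}=(-x)^{-}-\int_0^t1_{\{B_s<x\}}\delta B_s+\frac12{\mathscr L}(x,t)$, and then add them, using $|z|=z^{+}+z^{-}$ and $(-x)^{+}+(-x)^{-}=|x|$.

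The only step that genuinely needs care — and the one I expect to be the main obstacle — is justifying the passage from the abstractly defined pairing $\int_{\mathbb R}{\rm sign}(y-x){\mathscr L}(dy,t)$ (an $L^2(\Omega)$-limit of elementary integrals against ${\mathscr H}_{\ast}$-approximations of the step function) to the Stieltjes integral of the continuous, compactly supported map $y\mapsto{\mathscr L}(y,t)$, and thence to $-2{\mathscr L}(x,t)$. For this the argument used in Section~\ref{sec4-2} transfers verbatim: approximate $1_{(a,b]}$ by mollified smooth functions $f_n$, apply $[f_n(B),B]^{(TQ)}_t=\int_0^tf_n'(B_s)\frac{ds}{2\sqrt{\pi s}}$, integrate by parts via the occupation formula~\eqref{sec6-eq6.8}, and pass to the limit using the continuity of $y\mapsto{\mathscr L}(y,t)$. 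Everything else is a routine substitution into formulas already proved in this section.
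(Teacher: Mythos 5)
Your proof is correct and follows essentially the route the paper intends: the corollary is stated without proof, but the paper's proof of the analogous Tanaka formula for $W$ in Section~\ref{sec4-2} is exactly your second variant (apply the It\^o formula to $(y-x)^{\pm}$, convert the quadratic-covariation term into local time via the Bouleau--Yor identity~\eqref{sec6-eq6.10}, and combine the two one-sided formulas). Your direct computation with $F(y)=|y-x|$, together with the boundary evaluation $\int_{\mathbb R}{\rm sign}(y-x)\,{\mathscr L}(dy,t)=-2{\mathscr L}(x,t)$ justified by the continuity and compact support of $y\mapsto{\mathscr L}(y,t)$, is an equivalent repackaging of the same argument.
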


\end{document}